\theoremstyle{thmstyleone}%
\newtheorem{theorem}{Theorem}%
\newtheorem{remark}{Remark}%
\newtheorem{defn}{Definition}%
\newtheorem{thm}{Theorem}[section]
\newtheorem{assm}{Assumption}[section]
\newtheorem{lem}[theorem]{Lemma}
\newtheorem{cor}[theorem]{Corollary}
\numberwithin{theorem}{section} 
\newcommand{\bx}{{\bf x}}
\newcommand{\be}{{\bf e}}
\newcommand{\R}{\mathbb{R}}
\newcommand{\bn}{{\bf n}}
\newcommand{\bg}{{\bf g}}
\newcommand{\sU}{{\mathscr{U}}}
\newcommand{\s}{\color{black}}
\newcommand{\cred}{\color{red}}
\newcommand{\sB}{\mathcal B}
\newcommand{\sD}{\mathscr D}
\newcommand{\sE}{\mathcal E}
\newcommand{\Tr}{\text{Tr}}
\newcommand{\bQ}{{\bf Q}}
\newcommand{\sC}{\mathcal C}
\newcommand{\sO}{\mathcal O}
\newcommand{\sS}{\mathcal S}
\newcommand{\sY}{\mathcal Y}
\newcommand{\ep}{\varepsilon}
\newcommand{\sL}{\mathscr{L}}
\newcommand{\sF}{\mathcal F}
\newcommand{\bu}{{\bf u}}
\newcommand{\bq}{{\bf q}}
\newcommand{\bP}{{\mathbb P}}
\newcommand{\bw}{{\bf w}}
\newcommand{\bW}{{\bf W}}
\newcommand{\bE}{{\mathbb E}}
\newcommand{\bD}{{\bf D}}
\newcommand{\bH}{{\bf H}}
\newcommand{\bC}{{\bf C}}
\newcommand{\bU}{{\bf U}}
\newcommand{\bL}{{\bf L}}
\newcommand{\sW}{\mathscr{W}}
\newcommand{\sV}{\mathscr{V}}
\newcommand{\bv}{{\bf v}}
\newcommand{\bfeta}{\boldsymbol{\eta}}
\newcommand{\bfpsi}{\boldsymbol{\psi}}
\newcommand{\bfphi}{\boldsymbol{\phi}}
\newcommand{\bfvarphi}{\boldsymbol{\varphi}}
\renewcommand{\tilde}{\widetilde}
\newcommand{\bftau}{\boldsymbol{\tau}}
\renewcommand{\tilde}{\widetilde}
\numberwithin{equation}{section}
\title[Stochastic FSI with Navier slip boundary condition]{A stochastic fluid-structure interaction problem with the Navier slip boundary condition}
\author[K. Tawri]{Krutika Tawri$^{1}$}
\address{\newline	$^1$ Department of Mathematics, University of California Berkeley, CA, USA.}
\email{ktawri@berkeley.edu }
\begin{document}
	
	\maketitle
	
	\begin{abstract}
	We prove the existence of martingale solutions to a stochastic fluid-structure interaction problem involving a viscous, incompressible fluid flow, modeled by the Navier-Stokes equations, through a deformable elastic tube modeled by shell/membrane equations. The fluid and the structure are nonlinearly coupled via the kinematic and dynamic coupling conditions at the fluid-structure interface. This article considers the case where the structure can have unrestricted displacement and explores the Navier-slip boundary condition imposed at the fluid-structure interface, displacement of which is not known a priori and is itself a part of the solution. The proof takes a constructive approach based on a Lie splitting scheme. The geometric nonlinearity stemming from the nonlinear coupling, the possibility of random fluid domain degeneracy, the potential jumps in the tangential components of the fluid and structure velocities at the moving interface and the low regularity of the structure velocity require the development of new techniques that lead to the existence of martingale solutions.
	\end{abstract}

\maketitle

\section{Introduction}

This paper introduces a constructive approach for investigating solutions to a complex problem describing the interaction between a deformable (purely) elastic membrane and a two-dimensional viscous, incompressible fluid flow, under the influence of multiplicative stochastic forces. The fluid flow is described by the 2D Navier-Stokes equations, while the membrane is characterized by shell equations. The fluid and the structure are fully coupled across the  moving interface through a two-way coupling that ensures continuity of the normal components of their velocities and contact forces at the interface. There has been a lot of work done in the field of deterministic FSI in the past two decades (see e.g. \cite{CDEG, G08,KT12, MC13, GH16} and the references therein), however, even though there is a lot of evidence pointing to the need for studying their stochastic perturbations, the mathematical theory of stochastic FSI or, more generally, of stochastic PDEs on randomly moving domains is completely undeveloped.

The main result of this paper is the establishment of the existence of weak martingale solutions to this highly nonlinear stochastic fluid-structure interaction problem via a Lie operator splitting scheme. The recent articles \cite{TC23, T23}, which represent the only work addressing stochastic moving boundary problems, have demonstrated the existence of weak martingale solutions to FSI problems with scalar and unrestricted structural deformations, respectively. They also consider the no-slip boundary conditions imposed at the fluid-structure interface. While this is a common assumption in the blood flow literature (see e.g. \cite{QTV00}, \cite{CTGMHR06}), the slip condition is considered to be a more realistic boundary condition in modeling near-contact dynamics, such as the closure of heart valves, as it allows for the possibility of collisions (see e.g. \cite{NP10}, \cite{MC16}). 
{\bf In this work, we provide the first existence result for a stochastic moving boundary problem involving the Navier-slip boundary condition.} It also provides, for the first time, a compactness argument, in the context of FSI involving elastic structures, by constructing test functions that are allowed to have possible jumps in the tangential direction at the fluid-structure interface, which is a key feature of the slip condition. {Our compactness result thus generalizes the existing results  while also revealing hidden regularity of the structure.}
 
The first mathematical issues that we come across are related to the facts that the fluid domain boundary is a random variable, not known a priori, which can possibly degenerate in a random fashion and that
	the incompressibility condition and the Navier-slip boundary condition lead to the dependence of the test functions on the randomly moving domains and thus require us to consider random test processes which is highly unusual for typical SPDEs on fixed domains. Due to the possibility of non-zero longitudinal structural displacement, extra care has to be taken in dealing with degenerate fluid domains i.e. when the structure touches a part of the fluid domain boundary during deformation. 
	
	First, using the Arbitrary Lagrangian Eulerian (ALE) transformations, we map the fluid equations onto a fixed domain. The use of the ALE mappings and the analysis that follows is valid for as long as there is no loss of injectivity of the ALE transformation. To deal with this injectivity condition in the stochastic case we use a cut-off function and a stopping time argument. Furthermore, via the ALE maps, additional nonlinearities appear in the weak formulation of the problem that track several geometric quantities such as the fluid-structure interface tangent and normal. 

The dependence of the test functions on the domain configurations (via the ALE maps) creates issues as we apply move to a new probability space in search of martingale solutions. Hence, we {introduce} a system that approximates the original system by augmenting it by a singular term that penalizes the divergence and the boundary behavior of the fluid velocity. However, addition of this penalty term and the low temporal regularity of the solutions create further difficulties in establishing compactness which we overcome by employing non-standard compactness arguments. In establishing tightness of the laws of the approximate solutions, we also do not have the extra regularity for the structure velocity obtained from the fluid dissipation in the no-slip case. We finally show that the solutions to the approximate systems indeed converge to a desired martingale solution of the limiting equations.

Since the stochastic forcing appears not only in the structure equations but also in the fluid equations themselves, we come across additional difficulties, which are associated with the construction of the 
	appropriate "test processes" on the approximate and limiting (time-dependent and random) fluid domains.  Namely, along with the required divergence-free property on these domains, the test functions have to satisfy appropriate boundary and measurability conditions. We construct these approximate test functions by first constructing a Carath\'eodory function that gives the definition of a test function for the limiting equations
	and then by transforming this limiting test function in a way that preserves its desired properties on the approximate domains.

\section{Problem setup}\label{sec:det_setup}
We begin describing the problem by first considering the deterministic model.

	\subsection{The deterministic model and a weak formulation}
 We consider the flow of an incompressible, viscous fluid in a two-dimensional compliant cylinder $\sO=(0,L)\times(0,1)$ with a deformable lateral boundary $\Gamma$.
	The left and the right boundary of the cylinder are the inlet  and outlet for the time-dependent fluid flow. 
	We assume ``axial symmetry'' of the data and of the flow, which allows us to  consider the flow only in the upper half of the domain, with the bottom boundary fixed and equipped with the symmetry boundary conditions. 
Assume that the time-dependent fluid domain, whose displacement is not known a priori is denoted by 	$\sO_{{\bfeta}}(t)={\bfvarphi}(t,\sO)$ whereas its deformable interface is given by $\Gamma_{\bfeta}(t)={\bfvarphi}(t,\Gamma)$. Assume that ${\bfvarphi}:\sO\rightarrow\sO_{{\bfeta}}$ is a $C^1$ diffeomorphism such that
$${\bfvarphi}|_{\Gamma_{in},\Gamma_{out},\Gamma_{b}}=id,\quad det\nabla{\bfvarphi}( t,\bx)>0,$$
where the inlet, outlet and bottom boundaries of $\sO$ are given by $\Gamma_{\text{in}}=\{0\}\times (0,1),\Gamma_{\text{out}}=\{L\}\times (0,1),\Gamma_b=(0,L)\times \{0\}$ respectively.
The displacement of the elastic structure at the top lateral boundary $\Gamma$ can be identified by $(0,L)$ will be given by ${\bfeta}(t,z)={\bfvarphi}(t,z)-(z,1)$ for $z\in (0,L)$ (see Fig. \ref{fig}). The mapping ${{\bfeta}}:[0,L]\times[0,T] \rightarrow \R^2$ such that ${\bfeta}=({\eta}_z(z,t),{\eta}_r(z,t))$ is one of the unknowns in the problem.

\begin{figure}[h]
\centering	\includegraphics[scale=0.6]{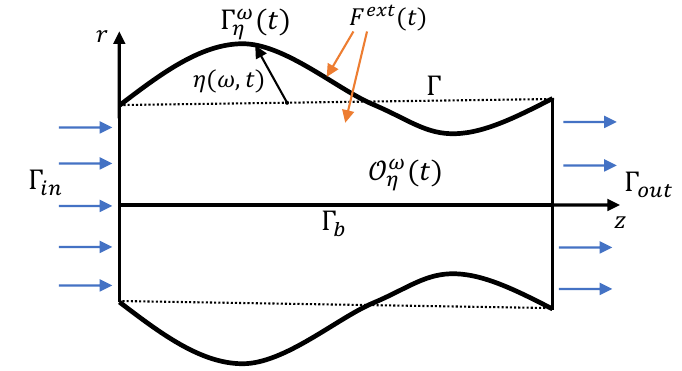}
	\caption{A realization of the fluid domain}\label{fig}
\end{figure}

\noindent{\bf The fluid subproblem:}		
The fluid flow is modeled by the incompressible Navier-Stokes equations in the 2D time-dependent domain $\sO_{{\bfeta}}(t)$:
\begin{equation}
	\begin{split}\label{u}
		\partial_t\bu + (\bu\cdot \nabla)\bu &= \nabla \cdot \sigma + F^{ext}_u\\
		\nabla \cdot \bu&=0,
	\end{split}
\end{equation}
where $\bu=(u_z,u_r)$ is the fluid velocity. The Cauchy stress tensor is $\sigma=-p I+2\nu \bD(\bu)$ where $p$ is the fluid pressure, $\nu$ is the kinematic viscosity coefficient and $\bD(\bu)=\frac12(\nabla\bu+(\nabla\bu)^T)$ is the symmetrized gradient. Here $F_u^{ext}$ represents any external forcing impacting the fluid.	{\s In this work we will be assuming that this force is random, as we shall see below.} 
The fluid flow is driven by dynamic pressure data given at the inlet and the outlet boundaries and we prescribe the symmetry boundary condition on the botton boundary as follows:
\begin{align}
		p +\frac12 |\bu|^2&=P_{in/out}(t),\qquad
		u_r=0 \quad\text{ on } \Gamma_{in/out}.\label{bc:in-out}\\
	u_r&=\partial_r u_z=0 \quad\text {on } \Gamma_b.\label{bc:bottom}
\end{align}
{\bf The structure sub-problem:} The elastodynamics problem for the displacement
${\bfeta}=(\eta_z,\eta_r)$ of the structure with respect to $\Gamma$, is as follows:
\begin{align}\label{eta}
	\partial^2_{t}{\bfeta} +\sL_e({\bfeta}) = F_{\bfeta} \quad \text{ in } (0,L),
\end{align}
where $F_{{\bfeta}}$ is the total force experienced by the structure and $\sL_e$ is a continuous, self-adjoint, coercive, linear operator on $\bH^2_0(0,L)$.	
This equation is supplemented with the following boundary conditions:
\begin{equation}\label{bc:eta}
	\begin{split}
		{\bfeta}(0)={\bfeta}(L)=\partial_z{\bfeta}(0)=\partial_z{\bfeta}(L)=0.
	\end{split}
\end{equation}
{\bf The non-linear fluid-structure coupling}: The coupling between the fluid and the structure takes place {\s across the moving fluid-structure interface. 
\begin{itemize}
\item	The kinematic coupling conditions in the {\bf Navier-slip} case are:
	\begin{align}\label{kinbc2}
		\partial_t {\bfeta}(t,z)\cdot \bn^{\bfeta}=\bu({\bfvarphi}(t,z))\cdot\bn^{\bfeta},\quad (t,z)\in[0,T]\times [0,L],\\
		(\partial_t{\bfeta}-\bu({\bfvarphi}(t,z)))\cdot\bftau^{\bfeta}=\alpha\sigma({\bfvarphi}(t,z))\bn^{\bfeta}\cdot\bftau^{\bfeta}, \quad (t,z)\in[0,T]\times [0,L].
	\end{align}
	\item
	The dynamic coupling condition is:
	\begin{align}
		F_{\bfeta}=-S_{\bfeta}(t,z) (\sigma {\bf n}^{\bfeta})|_{(t,z,{\bfvarphi}(t,z))} + F_{\bfeta}^{ext},
	\end{align}
	where $\bn^{\bfeta}(t,z)$ is the unit outward normal to the top boundary at the point, $\bftau^{\bfeta}$ is the tangent vector given by $\bftau^{\bfeta}(t,z)=\partial_z{\bfvarphi}(t,z)$ and $S_{\bfeta}(t,z)$ is the Jacobian of the transformation from Eulerian to Lagrangian coordinates. As earlier, $F^{ext}_{\bfeta}$ denotes any external force impacting the structure.
\end{itemize}
This system is supplemented with the following initial conditions:
\begin{align}\label{ic}
	\bu(t=0)=\bu_0,{\bfeta}(t=0)={\bfeta}_0, \partial_t{\bfeta}(t=0)=\bv_0.
\end{align}
\subsection*{Weak formulation on moving domain}
Using the convention that bold-faced letters denote spaces containing vector-valued functions, we define the following relevant function spaces for the fluid velocity and the structure displacement:
\begin{align*}
	&\tilde\sV_F(t)= \{{\bf u}=(u_z,u_r)\in \bH^{1}(\sO_{{\bfeta}}(t)) :
	\nabla \cdot {\bf u}=0,
	\text{ and } u_r=0 \text{ on }\partial \sO_{{\bfeta}}\setminus \Gamma_{{\bfeta}}(t) \}\nonumber\\
	&\tilde\sW_F(0,T)=L^\infty(0,T;\bL^2(\sO_{\bfeta}(\cdot))) \cap L^2(0,T;\tilde\sV_F(\cdot))\\
	&\tilde\sW_S(0,T)=W^{1,\infty}(0,T;\bL^2(0,L)) \cap L^\infty(0,T;\bH^2_0(0,L))\cap H^1(0,T;\bH^1(0,L))\\
	&\tilde\sW(0,T)=\{(\bu,{\bfeta})\in \tilde\sW_F(0,T)\times\tilde\sW_S(0,T):{ \bu({\bfvarphi}(t,z))}\cdot\bn^{\bfeta}=\partial_t{\bfeta}(t,z)\cdot\bn^{\bfeta},(t,z)\in (0,T)\times\Gamma\}.
\end{align*} 
{\s Next, we derive} a deterministic weak formulation of the problem on the moving domains. We consider $\bq \in C^1([0,T];\tilde\sV_F(\cdot))$ such that ${\bq}({\bfvarphi}(t,z))\cdot\bn^{\bfeta}=\bfpsi(t,z)\cdot\bn^{\bfeta}$ on $(0,T)\times \Gamma_{}$ for some $\bfpsi \in C^1([0,T];\bH^2_0(\Gamma))$. We multiply \eqref{u} by $\bq$, integrate in time and space and use Reynold's transport theorem to obtain,
\begin{align*}
	&(\bu(t), \bq(t))_{\sO_{{\bfeta}}(t)}= (\bu(0), \bq(0))_{\sO_{{\bfeta}}(0)} +\int_0^t\int_{\sO_{{\bfeta}}(s)} \bu(s)\cdot\partial_s\bq(s) \\
	&+ \int_0^t\int_{\Gamma_{{\bfeta}}(s)}(\bu(s)\cdot\bq(s))(\bu(s)\cdot \bn^{\bfeta}(s))
	-\int_0^t\int_{\sO_{{\bfeta}}(s)}(\bu(s)\cdot \nabla )\bu(s) \bq(s)\\
	&- 2\nu\int_0^t \int_{\sO_{{\bfeta}}(s)}  \bD(\bu(s))\cdot\bD(\bq(s))  ds+\int_0^t\int_{\partial\sO_{{\bfeta}}(s)} (\sigma \bn^{\bfeta}(s))\cdot \bq(s)
	+\int_0^t\int_{\sO_{{\bfeta}}(s)}F^{ext}_u(s)\bq(s).
\end{align*}
Set,
$$b(t,\bu,{\bf v},{\bf w}):=\frac12\int_{\sO_{{\bfeta}}(t)}\left( (\bu\cdot\nabla){\bf v}\cdot{\bf w}-(\bu\cdot\nabla){\bf w}\cdot{\bf v}\right)  ,$$
and observe that,
\begin{align*}
	-((\bu\cdot\nabla)\bu,&\bq)_{\sO_{{\bfeta}}}=-\frac12((\bu\cdot\nabla)\bu,\bq)_{\sO_{{\bfeta}}}+\frac12((\bu\cdot\nabla)\bq,\bu)_{\sO_{{\bfeta}}} -\frac12\int_{\partial\sO_{{\bfeta}}}\bu\cdot\bq \bu\cdot \bn^{\bfeta} \\
	&=-b(s,\bu,\bu,\bq) -\frac12\int_{\Gamma_{{\bfeta}}}\bu\cdot\bq\bu\cdot \bn^{\bfeta} + \frac12\int_{\Gamma_{in}}|u_z|^2q_z-\frac12\int_{\Gamma_{out}}|u_z|^2q_z.
\end{align*}
Using the divergence-free property of fluid velocity $\bu$ and the boundary conditions $u_r=0$ on $\Gamma_{in/out}$, we have that $\partial_ru_r=-\partial_zu_z=0$ on $\Gamma_{in/out}$. Hence,
\begin{align*}
	\int_{\Gamma_{in/out}}\sigma \bn^{\bfeta} \cdot \bq=\int_{\Gamma_{in/out}}\pm p\, q_z = \int_{\Gamma_{in}} \left( P_{in}-\frac12|\bu|^2\right) q_z 
	-\int_{\Gamma_{out}} \left( P_{out}-\frac12|\bu|^2\right)q_z ,
\end{align*}
whereas, $
	\int_{\Gamma_{b}}\sigma \bn^{\bfeta} \cdot \bq =0.$
We also write $\int_{\Gamma_{\bfeta}}\sigma\bn^{\bfeta}\cdot\bq$ as,
$$ \int_{\Gamma_{\bfeta}}\sigma\bn^{\bfeta}\cdot ((\bq\cdot\bn^{\bfeta})\bn^{\bfeta}+(\bq\cdot\bftau^{\bfeta})\bftau^{\bfeta})=\int_{\Gamma_{\bfeta}}\sigma\bn^{\bfeta}\cdot\bn^{\bfeta}(\bfpsi\cdot\bn^{\bfeta})+\frac1{\alpha}(\partial_t{\bfeta}-\bu)\cdot\bftau^{\bfeta}(\bq\cdot\bftau^{\bfeta}).$$

Next we multiply the structure equation \eqref{eta} by $\bfpsi$ and integrate in time and space to obtain
\begin{align*}
	(\partial_t{\bfeta}(t),\bfpsi(t))&=(\bv_0,\bfpsi(0)) +\int_0^t\int_0^L\partial_s{\bfeta}\cdot\partial_s\bfpsi dzds -\int_0^{t }\langle \sL_e(\bfeta), \bfpsi  \rangle ds\\
	&  
	-\int_0^t\int_0^LS_{\bfeta}\sigma \bn^{\bfeta} \cdot\bfpsi dzds +\int_0^t \int_0^LF_{\bfeta}^{ext}\cdot\bfpsi dz ds.
\end{align*}
Hence, in conclusion, we look for $(\bu,{\bfeta}) \in \tilde\sW(0,T)$,  that satisfies the following equation for {almost} every $t \in [0,T]$ and for any test function $\bQ=(\bq,\bfpsi)$ described above:
\begin{equation}
	\begin{split}\label{origweakform}
		&{\int_{\sO_{{\bfeta}(t )}}\bu(t )\bq(t ) d\bx+\int_0^L\partial_t{\bfeta}(t)\bfpsi(t )dz}-\int_0^{t }\int_{\sO_{{\bfeta}(s)}}\bu\cdot\partial_s\bq d\bx ds 
		-\int_0^{t}\int_0^L\partial_s{\bfeta}\partial_s\bfpsi dzds\\
		&+\int_0^{t} b(s,\bu,\bu,\bq)ds -\frac12\int_0^{t}\int_{\Gamma_{\bfeta}}(\bu\cdot\bq)(\bu\cdot\bn^{\bfeta}) dSds+ 2\nu\int_0^{t} \int_{\sO_{{\bfeta}(s)}} \bD(\bu)\cdot \bD(\bq) d\bx ds\\
		&+\frac1{\alpha}\int_0^t\int_0^L S_{\bfeta}(\partial_t{\bfeta}-\bu)\cdot\bftau^{\bfeta}((\bq-\bfpsi)\cdot\bftau^{\bfeta})dzds +\int_0^{t }\langle \sL_e(\bfeta), \bfpsi  \rangle ds\\
		&=\int_{\sO_{{\bfeta}_0}}\bu_0\bq(0) d\bx+ \int_{0}^L \bv_0\bfpsi(0) dz
		+\int_0^{t}P_{{in}}\int_{0}^1q_z\Big|_{z=0}drds-\int_0^tP_{{out}}\int_{0}^1q_z\Big|_{z=1}drds \\
		& +\int_0^{t}\int_{\sO_{{\bfeta}}(s)}\bq \cdot  F_{u}^{ext}\, d\bx ds 
		+\int_0^{t}\int_0^L \bfpsi\cdot  F_{{\bfeta}}^{ext}\, dz ds.
\end{split}\end{equation}
Here $F_{u}^{ext}$ is the volumetric external force applied to the fluid and $F_{{\bfeta}}^{ext}$ is the external force applied to the deformable boundary. 
\subsection{Stochastic framework on fixed domain}
We will take $ F^{ext}_u, F^{ext}_{\bfeta}$ to be random forces. For that purpose, consider a filtered probability space $(\Omega,\sF,(\sF_t)_{t \geq 0},\bP)$ that satisfies the usual
assumptions, i.e., $\sF_0$ is complete and the filtration is right continuous, that is, $\sF_{t}=\cap_{s \geq t}\sF_s$ for all $t \geq 0$.
	\subsubsection{ALE mappings}\label{sec:ale}
To deal the geometric nonlinearity arising due to the motion of the fluid domain, we work with the Arbitrary Lagrangian-Eulerian (ALE) mappings which are a family of diffeomorphisms from the fixed domain $\sO=(0,L)\times(0,1)$ onto {the moving domain}  $
\sO_{{\bfeta}}(t)$. Notice that the presence of the stochastic forcing implies that the domains $\sO_{\bfeta}$ are themselves random and that we must define the ALE mappings {\it pathwise}. That is, for every $\omega\in \Omega$ we will consider the maps $
A_{{\bfeta}}^\omega(t):\sO \rightarrow \sO_{{\bfeta}}(t,\omega)
$ such that $A^\omega_{\bfeta}(t)=\textbf{id}+{\bfeta}(t,\omega) \text{ on } \Gamma$ and $
A^\omega_{\bfeta}(t)=\textbf{id}  \text{ on } \partial\sO\setminus\Gamma.$

The pathwise transformed gradient, symmetrized gradient and divergence under this transformation will be denoted by 
$$\nabla^{\bfeta}f=\nabla f(\nabla A_{\bfeta})^{-1},  \, \bD^{\bfeta}(\bu)=\frac12(\nabla^{\bfeta}\bu+(\nabla^{\bfeta})^T\bu),\text{ and } div^{\bfeta}f=tr(\nabla^{\bfeta}f). $$
The Jacobian of the ALE mapping is given by $
J^\omega_{\bfeta}(t)=\text{det }\nabla A_{{\bfeta}}^\omega(t).$
Using $\bw^{\bfeta}$ to denote the ALE velocity
$\bw^{\bfeta}=\frac{d}{dt}A_{\bfeta}$, we note that $
\partial_tJ_{\bfeta}=J_{\bfeta} \nabla^{\bfeta}\cdot\bw^{\bfeta} .
$
We also rewrite the advection term as follows:
$$b^\eta(\bu,\bw,\bq)=\frac12\int_{\sO}J_{{\bfeta}}\left(((\bu-\bw^{\bfeta})\cdot\nabla^{\bfeta} )\bu\cdot\bq-((\bu-\bw^{\bfeta})\cdot\nabla^{\bfeta} )\bq\cdot\bu\right). $$
We will transform \eqref{origweakform} using these ALE maps and give the definition of martingale solutions on the fixed domain $\sO$.

We begin by describing the noise. We will assume that the external forces $F_u^{ext}$ and ${\s F_{{\bfeta}}^{ext}}$ are multiplicative stochastic forces and that we can then write the combined stochastic forcing $F^{ext}$ as follows:
\begin{equation}\label{StochasticForcing}
	{\s F^{ext}} := G(\bu,{\bfeta}) {dW},
\end{equation}
where  $W$ is a $U$-valued Wiener process with respect to the filtration $(\sF_t)_{t \geq 0}$, where $U$ is a separable Hilbert space. We denote by $Q$ 
the covariance operator of $W$, which is a positive, trace class operator on $U$, 
and define $U_0:=Q^{\frac12}(U)$. Letting $\bL^2=\bL^2(\sO)\times\bL^2(0,L)$, we now give assumptions on the noise coefficient $G$:
{\s  \begin{assm}\label{G}
		 The {noise coefficient} $G$ is a function $G:\bL^2(\sO)\times \bH^2_0(0,L) \rightarrow L_2(U_0;\bL^2)$, such that for any $\frac32\leq s<2$ the following conditions hold true:
		\begin{equation}\begin{split}\label{growthG}
				&\|G(\bu,{\bfeta})\|_{L_2(U_0;\bL^2)} \leq \|{\bu}\|_{\bL^2(\sO)} + \|\bfeta\|_{\bH^2_0(0,L)} ,\\	&\|G(\bu_1,{\bfeta}_1)-G(\bu_2,{\bfeta}_2)\|_{L_2(U_0;\bL^2)} \leq
				\|{\bu_1}-{\bu_2}\|_{\bL^2(\sO)}
				+\|{\bfeta}_1-{\bfeta}_2\|_{\bL^2(0,L)}.
		\end{split}\end{equation}	
	\end{assm}
	Here $L_2(X,Y)$ denotes the space of Hilbert-Schmidt operators from Hilbert spaces $X$ to $Y$.
	\subsubsection{Definition of martingale solutions} 
	 We will now introduce the functional framework for the stochastic problem on the  fixed reference domain $\sO=(0,L) \times (0,1)$.  The following are the functional spaces for the stochastic FSI problem defined on the fixed domain $\sO$:
	\begin{align*}
		&V= \{{\bf u}=(u_z,u_r)\in {\bH}^{1}(\sO): 
	 u_r=0 \text{ on } \Gamma_{in/out/b} \},\\
		&\sW_F=L^2(\Omega;L^\infty(0,T;\bL^2(\sO))) \cap L^2(\Omega;L^2(0,T;V)),\\
		&\sW_S=L^2(\Omega;W^{1,\infty}(0,T;\bL^2(0,L)) \cap L^\infty(0,T;\bH^2_0(0,L))\cap H^1(0,T;\bH_0^1(0,L))),\\
		&\sW(0,T)=\{(\bu,{\bfeta})\in \sW_F\times \sW_S:
		\bu|_{\Gamma}\cdot\,\bn^{\bfeta}=\partial_t{\bfeta}\cdot\bn^{\bfeta},\,\,\nabla^{\bfeta} \cdot {\bu}=0\, \,\bP-a.s.\}.
	\end{align*} 
		We also define the following spaces for test functions and fluid-structure velocities:
	\begin{equation}\label{Dspace}
		\sD=\{({\bf q},\bfpsi) \in  V \times \bH^2_0(0,L), \quad\text{and}\quad
			\sU=\{(\bu,\bv)\in V\times \bL^2(0,L).
		\}.
	\end{equation}
	\begin{defn}({\bf \s Martingale solution})\label{def:martingale}
		Given compatible deterministic initial data,  $\bu_0 \in \bL^2(\sO)$, $\bv_0\in \bL^2(0,L)$ and initial structure displacement ${\bfeta}_0\in \bH_0^2(0,L)$ 
		that satisfies
			for some $\delta_1,\delta_2>0$:
			\begin{align}	 \label{etainitial}
				{\delta_1<\inf_{\sO}J_{{\bfeta}_0},\quad \text{ and }\quad \|{\bfeta}_0\|_{\bH^2_0(0,L)}<\frac1{\delta_2}},
		\end{align}
		we say that  $(\mathscr{S},\bu,{\bfeta},T^{\bfeta})$ is a  martingale solution to the system \eqref{u}-\eqref{ic} under the assumptions \eqref{growthG} if 
		\begin{enumerate}
			\item  $\mathscr{S}=(\Omega,\sF,(\sF_t)_{t\geq 0},\bP,W)$ is a stochastic basis, that is, $(\Omega,\sF,(\sF_t)_{t\geq 0},\bP)$ is a filtered probability space satisfying the usual conditions and $W$ is a $U$-valued Wiener process.
			\item $(\bu,{\bfeta})\in \sW(0,T)$. 
			\item $T^{\bfeta}$ is a $\bP$-a.s. strictly positive, $\sF_t-$stopping time.
			\item  
			$\bu$ and ${\bfeta}$ are  $(\sF_t)_{t \geq 0}-$progressively measurable.   
			\item	For every $(\sF_t)_{t \geq 0}-$adapted, essentially bounded process $\bQ:=(\bq,\bfpsi)$ with $C^1$ paths in $\sD$ such that 
			$\nabla^{\bfeta} \cdot \bq=0$, and $\bq_\Gamma\cdot\bn^{\bfeta}=\bfpsi\cdot\bn^{\bfeta}$ the equation 
		\end{enumerate}
		\begin{equation}\label{weaksol}
			\begin{split}
				&{\int_{\sO}J_{\bfeta}(t)\bu(t)\bq(t) +\int_0^L\partial_t{\bfeta}(t)\bfpsi(t)}= \int_{\sO}J_0\bu_0\bq(0)   + \int_{0}^L \bv_0\bfpsi(0)  \\
				&+\int_0^{t }\int_{\sO}J_{\bfeta}\,\bu\cdot \partial_t\bq 	-\frac12\int_0^t\int_{\sO}J_{\bfeta}(\bu\cdot\nabla^{\bfeta }\bu \cdot\bq
				- (\bu^{}-2\bw)\cdot\nabla^{\bfeta }\bq\cdot\bu ) 	\\
				&- 2\nu\int_0^{t } \int_{\sO} J_{\bfeta}\bD^{\bfeta}(\bu)\cdot \bD^{\bfeta}(\bq) -\frac1{\alpha}\int_0^t\int_\Gamma S_{\bfeta}(\partial_t{\bfeta}-\bu)\cdot\tau^{\bfeta}((\bq-\bfpsi)\cdot\tau^{\bfeta})\\
				&+\int_0^{t }\int_0^L\partial_t{\bfeta}\partial_t\bfpsi -\int_0^{t }\langle \sL_e(\bfeta), \bfpsi  \rangle  \\
				&+\int_0^{t }\left( P_{{in}}\int_{0}^1q_z\Big|_{z=0}dr-P_{{out}}\int_{0}^1q_z\Big|_{z=1}dr\right) +\int_0^{t }(\bQ,G(\bu,{\bfeta})\,dW),
		\end{split}\end{equation}
		holds $\bP$-a.s. for almost every $t \in[0,T^{\bfeta})$.
	\end{defn}
We will next present a constructive proof of the existence of martingale solutions based on the operator splitting scheme constructed in the following section.
	\section{Operator splitting scheme} \label{sec:splitscheme}
	In this section we introduce a Lie operator splitting scheme that defines a sequence of approximate solutions to \eqref{weaksol} by semi-discretizing the problem in time. Our aim is to show that up to a subsequence, approximate solutions converge in a certain sense to a martingale solution of the stochastic FSI problem. 
	
	\subsection{Definition of the splitting scheme} We discretize the problem in time and use an operator splitting to decouple the stochastic problem into two subproblems, viz the structure and the fluid subproblems.
	We denote the time step by $\Delta t=\frac{T}{N}$ and use the notation $t^n=n\Delta t$ for $n=0,1,...,N$.	
	Let $(\bu^0,{\bfeta}^0,\bv^0)=(\bu_0,{\bfeta}_0,\bv_0)$ be the initial data. Then at the $i^{th}$ time level, we update the vector $(\bu^{n+\frac{i}{2}},{\bfeta}^{n+\frac{i}{2}},\bv^{n+\frac{i}{2}})$, for $i=1,2$ and $n=0,1,2,...,N-1$, as follows.
	\subsection*{The structure sub-problem} 
	In this sub-problem we update the structure displacement and the structure velocity while keeping the fluid velocity unchanged. That is, given $({\bfeta}^n,\bv^n) \in \bH^2_0(0,L)\times \bL^2(0,L)$ we look for a pathwise solution $({\bfeta}^{n+\frac12},\bv^{n+\frac12}) \in \bH^2_0(0,L) \times \bH^2_0(0,L)$ to the following equations: For any $\bfphi \in \bL^2(0,L)$ and $\bfpsi \in \bH^2_0(0,L)$,
	\begin{equation}
		\begin{split}\label{first}
			\bu^{n+\frac12}&=\bu^n,\\
			\int_0^L({\bfeta}^{n+\frac12}-{\bfeta}^n) \bfphi dz&= (\Delta t)\int_0^L \bv^{n+\frac12}\bfphi dz,\\
			\int_0^L \left( \bv^{n+\frac12}-\bv^n\right)  \bfpsi dz &+ (\Delta t)\langle \sL_e({\bfeta}^{n+\frac12}),\bfphi \rangle +\ep{(\Delta t)}{}\int_{0}^L\partial^2_{z} \bv^{n+\frac12}\cdot\partial^2_{z}\bfpsi dz=0.
		\end{split}
	\end{equation}
	
	For each $\omega \in \Omega$ and $n$, we define the ALE map associated with the structure variable $\bfeta^n$ as the solution to:
	\begin{equation}\label{ale}
		\begin{split}
			\Delta A^\omega_{{\bfeta}^n}&=0, \quad \text{ in } \sO,\\
			A^\omega_{{\bfeta}^n}=\textbf{id}+{\bfeta}^n(\omega) \text{ on } \Gamma,& \quad \text{ and }\quad
			A^\omega_{{\bfeta}^n}=\textbf{id}  \text{ on } \partial\sO\setminus\Gamma.
		\end{split}
	\end{equation}
	Note that we have added the last term in \eqref{first}$_3$ to regularize the structure velocity. This is required for the time derivative of the Jacobian in \eqref{martingale1} to make sense and to circumvent the issues associated with the "very weak" solutions to the Poisson equation on polygonal domains with corners. We will first pass $N\to \infty$ and then $\ep \to 0$.
\begin{lem} Consider $v\in H^{2}_0(\Gamma)$ and $w \in H^2(\sO)$ that solve,
\begin{align}\label{eqn_w}
	-\Delta w=0 \text{ in } \sO,\quad
	w=v \text{ on } \Gamma,\quad
	w=0 \text{ on } \partial\sO\setminus\Gamma.
\end{align}
Then, 
\begin{align}\label{wreg}
	\|w\|_{L^2(\sO)}\leq \|v\|_{H^{-\frac12}(\Gamma)}.
\end{align}
\end{lem}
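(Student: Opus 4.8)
\subsection*{Proof plan}

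The plan is to prove \eqref{wreg} by transposition (duality), using that $\sO=(0,L)\times(0,1)$ is a rectangle, hence a bounded \emph{convex} polygon. Since $L^{2}(\sO)$ is its own dual, it suffices to bound $\big|\int_{\sO}w\,\phi\big|$ by $\|v\|_{H^{-1/2}(\Gamma)}\,\|\phi\|_{L^{2}(\sO)}$ for an arbitrary $\phi\in L^{2}(\sO)$. Given such a $\phi$, I would introduce the adjoint problem $-\Delta\zeta=\phi$ in $\sO$ with $\zeta=0$ on $\partial\sO$; by elliptic regularity on convex polygonal domains its solution satisfies $\zeta\in H^{2}(\sO)\cap H^{1}_{0}(\sO)$ with $\|\zeta\|_{H^{2}(\sO)}\le C\|\phi\|_{L^{2}(\sO)}$.

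The core step is Green's identity. Since $w,\zeta\in H^{2}(\sO)$, integrating by parts twice and using $\Delta w=0$ in $\sO$, $\zeta=0$ on $\partial\sO$, together with the boundary values $w=v$ on $\Gamma$ and $w=0$ on $\partial\sO\setminus\Gamma$, one obtains
\[
\int_{\sO}w\,\phi=\int_{\sO}w\,(-\Delta\zeta)=-\int_{\partial\sO}w\,\partial_{n}\zeta\,dS=-\int_{\Gamma}v\,\partial_{n}\zeta\,dS .
\]
One then estimates the last term by the $H^{-1/2}(\Gamma)$--$H^{1/2}(\Gamma)$ duality together with the trace bound $\|\partial_{n}\zeta\|_{H^{1/2}(\Gamma)}\le C\|\zeta\|_{H^{2}(\sO)}\le C\|\phi\|_{L^{2}(\sO)}$, and takes the supremum over $\|\phi\|_{L^{2}(\sO)}\le 1$, which gives \eqref{wreg} with a constant depending only on $\sO$ (the sharp constant $1$ in \eqref{wreg} follows from the alternative argument below).

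The one genuinely delicate point, and the reason the hypothesis is $v\in H^{2}_{0}(\Gamma)$ rather than merely $v\in H^{1/2}(\Gamma)$, is that $\Gamma$ is only a portion of $\partial\sO$: one must give meaning to $\int_{\Gamma}v\,\partial_{n}\zeta$ and control it by $\|v\|_{H^{-1/2}(\Gamma)}$ although $\partial_{n}\zeta$ need not lie in $H^{1/2}_{00}(\Gamma)$. This is precisely the ``corner'' difficulty for very weak solutions on polygonal domains noted after \eqref{ale}. It is resolved by $v\in H^{2}_{0}(\Gamma)$: since $v$ and $\partial_{z}v$ vanish at the two endpoints of $\Gamma$, the zero extension $\bar v$ of $v$ belongs to $H^{2}(\partial\sO)$, so $\int_{\Gamma}v\,\partial_{n}\zeta=\langle\bar v,\partial_{n}\zeta\rangle_{\partial\sO}$ is unambiguous and is dominated by $\|v\|_{H^{-1/2}(\Gamma)}\|\partial_{n}\zeta\|_{H^{1/2}(\partial\sO)}$; moreover $\partial_{n}\zeta$ itself vanishes at the endpoints of $\Gamma$, since $\zeta\equiv0$ on the adjacent edges. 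This is also why the splitting scheme \eqref{first} regularizes the structure velocity by the $\varepsilon$-term: it guarantees that this lemma is only ever applied with boundary data in $H^{2}_{0}(\Gamma)$.

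Finally, since $\sO$ is a rectangle there is a cleaner route that bypasses the trace bookkeeping and yields the constant $1$ directly: separation of variables. As $v$ vanishes at $z=0,L$, expand $v(z)=\sum_{k\ge1}v_{k}\sin(k\pi z/L)$; the harmonic extension vanishing on the other three sides is
\[
w(z,r)=\sum_{k\ge1}v_{k}\,\sin\!\Big(\tfrac{k\pi z}{L}\Big)\,\frac{\sinh(k\pi r/L)}{\sinh(k\pi/L)} ,
\]
and an elementary computation gives $\int_{0}^{1}\dfrac{\sinh^{2}(k\pi r/L)}{\sinh^{2}(k\pi/L)}\,dr\le\dfrac{L}{k\pi}=\lambda_{k}^{-1/2}$ with $\lambda_{k}=(k\pi/L)^{2}$. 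Since $\|w\|_{L^{2}(\sO)}^{2}=\tfrac{L}{2}\sum_{k}|v_{k}|^{2}\int_{0}^{1}\frac{\sinh^{2}(k\pi r/L)}{\sinh^{2}(k\pi/L)}\,dr$ and $\|v\|_{H^{-1/2}(\Gamma)}^{2}=\tfrac{L}{2}\sum_{k}\lambda_{k}^{-1/2}|v_{k}|^{2}$ in the spectral normalization, summing in $k$ gives $\|w\|_{L^{2}(\sO)}\le\|v\|_{H^{-1/2}(\Gamma)}$. I expect the main obstacle in either approach to be the same: correctly handling the interaction between the negative-order norm on $\Gamma$ and the corners of $\sO$ where $\Gamma$ meets the rigid part of $\partial\sO$ — which the $H^{2}_{0}(\Gamma)$ hypothesis is tailored to neutralize.
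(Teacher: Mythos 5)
Your primary route is essentially the paper's proof: the paper also argues by transposition, introducing the adjoint Dirichlet problem $-\Delta g=w$ in $\sO$, $g=0$ on $\partial\sO$, applying Green's identity for $H^{2}$ functions on a polygon (Grisvard, Thm.\ 1.5.3.3) to get $\|w\|_{L^{2}(\sO)}^{2}=\pm\int_{\Gamma}v\,\partial_{n}g$, and closing with the $H^{-1/2}$--$H^{1/2}$ duality and the trace/regularity bounds $\|\partial_{n}g\|_{H^{1/2}(\Gamma)}\le\|g\|_{H^{2}(\sO)}\le\|w\|_{L^{2}(\sO)}$. The only difference is cosmetic: the paper tests directly against $\phi=w$, which yields $\|w\|_{L^{2}}^{2}\le\|v\|_{H^{-1/2}(\Gamma)}\|w\|_{L^{2}}$ and avoids the supremum over arbitrary $\phi\in L^{2}(\sO)$. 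Your separation-of-variables alternative is a correct, genuinely more elementary route that exploits the rectangular geometry and delivers the constant $1$ explicitly in the spectral normalization of $H^{-1/2}(\Gamma)$ (the normalization the paper implicitly uses later via the projections $P_{M}$); your identification of the corner issue as the reason for the $H^{2}_{0}(\Gamma)$ hypothesis and the $\ep$-regularization in the splitting scheme matches the paper's stated motivation.
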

\begin{proof}
We introduce the following system:
\begin{align*}
	-\Delta g=w \text{ in } \sO,\quad
	g=0 \text{ on }  \partial\sO.
\end{align*}
Since $w,g\in H^2(\sO)$, thanks to Theorem 1.5.3.3. in \cite{G11} (compare with Remark 1.5.3.5 in \cite{G11}), the following holds,
\begin{align*}
	(w,w)=-(\Delta g,w)=-( g,\Delta w)+\int_{\Gamma} v \,\partial_n g=\int_{\Gamma} v \,\partial_n g.
\end{align*} 
Hence,  thanks to Theorem 1.5.2.1 in \cite{G11} we then obtain
\begin{align*}
	\|w\|_{L^2(\sO)}^2 \leq \|v\|_{H^{-\frac12}(\Gamma)}\|\partial_ng\|_{ H^{\frac12}(\Gamma)} \leq \|v\|_{H^{-\frac12}(\Gamma)}\|g\|_{ H^{2}(\sO)} \leq \|v\|_{H^{-\frac12}(\Gamma)}\|w\|_{ L^{2}(\sO)}.
\end{align*}
\end{proof}

\subsection*{The fluid sub-problem}   In this sub-problem we update the fluid and the structure velocities while keeping the structure displacement the same.  As notes in the introduction, there are two major difficulties associated with constructing this half of the scheme. The {\bf first difficulty} arises because the fluid domains can degenerate randomly. Hence, we introduce an "artificial structure displacement" random variable by the means of a cut-off function as follows:
For $\delta=(\delta_1,\delta_2)$, let $\Theta_\delta$ be the
step function that satisfies
$\Theta_{\delta}(x,y)=1$ if $\delta_1 < x, \text{} y<  \frac1{\delta_2}$, 
and $\Theta_{\delta}(x,y)=0$ otherwise. 
For brevity we define,  \begin{align}\label{theta}
	\theta_\delta({\bfeta}^n):=\min_{k\leq n}\,
	\Theta_{\delta}\left( \inf_{\sO}J^k, \|{\bfeta}^k\|_{\bH^s(\Gamma)}\right)\quad \text{for a fixed $s \in (\frac{3}2,2)$},\end{align}
where $J^k(\omega)=\text{det}\nabla A^\omega_{{\bfeta}^k}$ is the Jacobian of the map defined in \eqref{ale}. Note that, 	$\theta_\delta$ is a real-valued function which tracks all the structure displacements, and is equal to 1 until the step for which the structure quantities leave the desired bounds given in terms of $\delta$. Now we define the artificial structure displacement random variable as the following stopped process,
\begin{align}\label{eta*}
	{\bfeta}^{n}_*(z,\omega)={\bfeta}^{\max_{0\leq k\leq n}\theta_\delta({\bfeta}^k)k}(z,\omega)\quad \text{for every } \omega \in \Omega,\,\, z\in[0,L].
\end{align}

Observe that, for any $p>2$ and $s\geq\frac{5}2-\frac2p$, we have the following regularity result for the harmonic extension of the boundary data associated with $\bfeta^n_*$ on a square (see Section 5 in \cite{G11}):
\begin{align}\label{A2p}
	\|A^\omega_{{\bfeta}_*^n}-{\bf id}\|_{\bW^{2,p}(\sO)} \leq C\|{\bfeta}_*^n\|_{\bW^{2-\frac1{p},p}(\Gamma)} \leq C\|{\bfeta}_*^n\|_{\bH^s(\Gamma)} .
\end{align}
Then Morrey's inequality for some $p<4$, gives us a constant $C_*>0$ (see Theorem 7.26 in \cite{GT83}) such that,
\begin{align}\label{boundJ}
	\|\nabla (A^\omega_{{\bfeta}_*^n}-{\bf id})\|_{\bC^{0,\frac1p}( \bar\sO)} \leq C\|{\bfeta}_*^n\|_{\bH^s(\Gamma)} \leq \frac{C_*}{\delta_2}, \quad \frac{5}2-\frac2p\leq s<2.
\end{align}
Theorem 5.5-1 (B) of \cite{C88}, then ensures that the map $A^{\omega}_{{\bfeta}^n_*}\in \bC^{1,\frac1p}(\bar\sO)$ is injective for any $n$ if $\delta_2$ satisfies 
\begin{align}\label{delta}
	C_* < \delta_2.
\end{align}
 Hence, such $\delta_2$, the domain configurations corresponding the the artificial variables $\bfeta^n_*$ are non-degenerate and their Jacobians have a deterministic lower bound of $\delta_1$. These artificial domain configurations will be used to define the fluid sub-problem.

The {\bf second difficulty} in constructing the dependence of the fluid test functions, through the transformed divergence-free condition and the kinematic coupling condition, on the structure displacement found in the previous sub-problem. Hence to avoid dealing with random test function we supplement the weak formulation in this sub-problem by penalty terms, via the parameter $\ep>0$, that enforce the incompressibility condition and the continuity of velocities in the normal direction, only in the limit $\ep \rightarrow 0$. 
	
\noindent{\bf A penalized system on artificial domains:}  Let $\Delta_n W:=W(t^{n+1})-W(t^n)$. Then we look for $ (\bu^{n+1} , \bv^{n+1})\in \sU_{}$ that solves	
	\begin{equation}
	\begin{split}\label{second}
		&\qquad\qquad{\bfeta}^{n+1}:={\bfeta}^{n+\frac12}, \\
		&\int_{\sO}
		J^n_*\left( \bu^{n+1}-\bu^{n+{\frac12}}\right) \bq d\bx  +\frac{1}2\int_{\sO}
		\left( J_*^{n+1}-J^n_*\right)  \bu^{n+1}\cdot\bq d\bx \\
		&+\frac12(\Delta t)\int_{\sO}J_*^n((\bu^{n+1}-
		\bw_*^{{n}})\cdot\nabla^{{\bfeta}_*^n}\bu^{n+1}\cdot\bq - (\bu^{n+1}-
		\bw_*^{{n}})\cdot\nabla^{{\bfeta}_*^n}\bq\cdot\bu^{n+1})d\bx \\
		&+2\nu(\Delta t)\int_{\sO}J^n_* \bD^{{\bfeta}_*^{n}}(\bu^{n+1})\cdot \bD^{{\bfeta}_*^{n}}(\bq) d\bx\\ 
		&+ \frac{(\Delta t)}{\ep}\int_{\sO}
		\text{div}^{{\bfeta}^n_*}\bu^{n+1}\text{div}^{{\bfeta}^n_*}\bq d\bx +\frac{(\Delta t)}{\ep}\int_\Gamma (\bu^{n+1}-\bv^{n+1})\cdot\bn^n_*(\bq-\bfpsi)\cdot\bn^n_*\\
	&	+\frac{(\Delta t)}{\alpha}\int_\Gamma S^{n}(\bu^{n+1}-\bv^{n+1})(\bq -\bfpsi)dz
		+\int_0^L(\bv^{n+1}-\bv^{n+\frac12} )\bfpsi dz \\
	&	=  \int_0^{t }\left( P^n_{{in}}\int_{0}^1q_z\Big|_{z=0}dr-P^n_{{out}}\int_{0}^1q_z\Big|_{z=1}dr\right)
	+	(G(\bu^{n},{\bfeta}_*^n)\Delta_n W, \bQ)_{\bL^2},
	\end{split}
\end{equation}
		for any $ (\bq, \bfpsi)\in\sU$. 
	 	Here we set,
	$P^n_{in/out}:=\frac1{\Delta t}\int_{t^n}^{t^{n+1}}P_{in/out}\,dt	$. Moreover,
	 the random variable $\bn^n_*$ is the unit normal to $\Gamma_{\bfeta^n_*}$ and
	$$\bw^{n}_*=\frac{1}{\Delta t}(A^\omega_{{\bfeta}_*^{n+1}}-A^\omega_{{\bfeta}_*^n}),\quad J^n_*=\text{det}\nabla A^\omega_{{\bfeta}^n_*}.$$
	\begin{remark}	
Note that, to obtain a stable scheme, we update $\bfeta^n$ in the first sub-problem  using the data from the second sub-problem and not $\bfeta_*^n$. However this means that after a certain random time, we will produce solutions that are meaningless. These discrepensies will be handled by introducing a stopping time until which the limiting solutions, corresponding to the approximations constructed in Section \ref{subsec:approxsol} using ${\bfeta}^n$'s and ${\bfeta}^n_*$'s, are equal. 
\end{remark}
Now we introduce the following discrete energy and dissipation for $i=0,1$:
	\begin{equation}\label{EnDn}
	\begin{split}
		&E^{n+\frac{i}2}=\frac12\Big(\int_{\sO}J^n_*|\bu^{n+\frac{i}2}|^2 d\bx
		+\|\bv^{n+\frac{i}2}\|^2_{\bL^2(0,L)}+\|{\bfeta}^{n+\frac{i}2}\|^2_{\bH_0^2(0,L)}\Big),\\
		&D^n_1=\ep{(\Delta t)}\int_0^L |\partial_{zz}\bv^{n+\frac12}|^2,\\
		&D^{n}_2=\Delta t \int_{\sO}\left( 2\nu J^n_*|\bD(\bu^{n+1})|^2  \right) d\bx +\frac{\Delta t}{\alpha}\int_\Gamma |\bu^{n+1}-\bv^{n+1}|^2 \\
		&+\frac{(\Delta t)}{\ep}\int_{\sO}|\text{div}^{\bfeta^n_*}\bu^{n+1}|^2dx +\frac{\Delta t}{\ep}\int_\Gamma|(\bu^{n+1}-\bv^{n+1})\cdot\bn^n_*|^2.
\end{split}\end{equation}
	\begin{lem}({\bf{Existence for the structure sub-problem.}})
		Assume that ${\bfeta}^n$ and $\bv^{n}$ are $\bH^2_0(0,L)$ and $\bL^2(0,L)$ valued $\sF_{t^n}$-measurable random variables, respectively. Then there exist $\bH^2_0(0,L)$-valued $\sF_{t^n}$-measurable random variables ${\bfeta}^{n+\frac12},\bv^{n+\frac12}$ that solve \eqref{first}, and the following semidiscrete energy inequality holds:
		\begin{equation}\label{energy1}
			\begin{split}
				E^{n+\frac12} +D^n_1+C_1^n = E^{n},
			\end{split}
		\end{equation}
		where
		$$C^{n}_1:= \frac12\|\bv^{n+\frac12}-\bv^n\|_{\bL^2(0,L)}^2   +\frac12 \|{\bfeta}^{n+\frac12}-{\bfeta}^{n}\|_{\bH_0^2(0,L)}^2,$$ corresponds to numerical dissipation.
	\end{lem}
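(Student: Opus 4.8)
The statement to prove is the existence lemma for the structure sub-problem, i.e. that \eqref{first} admits $\bH^2_0(0,L)$-valued $\sF_{t^n}$-measurable solutions $({\bfeta}^{n+\frac12},\bv^{n+\frac12})$ satisfying the energy identity \eqref{energy1}. The plan is to treat \eqref{first} pathwise (for each fixed $\omega$) as a linear elliptic problem in the single unknown $\bv^{n+\frac12}$, then recover ${\bfeta}^{n+\frac12}$ from the second line of \eqref{first}, and finally check measurability and the energy balance.

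First I would eliminate ${\bfeta}^{n+\frac12}$ by substituting the second equation of \eqref{first}, namely ${\bfeta}^{n+\frac12}={\bfeta}^n+(\Delta t)\bv^{n+\frac12}$, into the third. This yields a single variational equation for $\bv^{n+\frac12}\in\bH^2_0(0,L)$: for all $\bfpsi\in\bH^2_0(0,L)$,
\begin{equation*}
\int_0^L \bv^{n+\frac12}\bfpsi\,dz + (\Delta t)^2\langle\sL_e(\bv^{n+\frac12}),\bfpsi\rangle + \ep(\Delta t)\int_0^L \partial_{zz}\bv^{n+\frac12}\,\partial_{zz}\bfpsi\,dz = \int_0^L \bv^n\bfpsi\,dz - (\Delta t)\langle\sL_e({\bfeta}^n),\bfpsi\rangle.
\end{equation*}
(There is a minor typo in the paper's display \eqref{first}$_3$: the middle term should pair $\sL_e({\bfeta}^{n+\frac12})$ against $\bfpsi$, not $\bfphi$; I would point this out and proceed with the corrected form.) The left-hand side defines a bilinear form on $\bH^2_0(0,L)$ which is bounded (since $\sL_e$ is continuous on $\bH^2_0$ and the $\partial_{zz}$ term is bounded there) and coercive: the $\ep(\Delta t)\|\partial_{zz}\cdot\|_{L^2}^2$ term controls the full $\bH^2_0$-norm by Poincaré, the $L^2$ term is nonnegative, and $\langle\sL_e(\bv),\bv\rangle\geq 0$ by coercivity of $\sL_e$. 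The right-hand side is a bounded linear functional on $\bH^2_0(0,L)$ because ${\bfeta}^n\in\bH^2_0$ and $\bv^n\in\bL^2$. Hence Lax–Milgram gives a unique $\bv^{n+\frac12}(\omega)\in\bH^2_0(0,L)$ for each $\omega$, and then ${\bfeta}^{n+\frac12}(\omega):={\bfeta}^n(\omega)+(\Delta t)\bv^{n+\frac12}(\omega)\in\bH^2_0(0,L)$. Measurability with respect to $\sF_{t^n}$ follows because the solution map of a Lax–Milgram problem depends continuously (indeed linearly) on the data $({\bfeta}^n,\bv^n)$, which are $\sF_{t^n}$-measurable; one can make this rigorous via a Galerkin/finite-dimensional approximation in a fixed orthonormal basis of $\bH^2_0(0,L)$, where each approximant is a measurable function of the data by Cramer's rule, and then pass to the limit.

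For the energy identity \eqref{energy1} I would test the third equation of \eqref{first} with $\bfpsi=\bv^{n+\frac12}$ and the second with $\bfphi=\bv^{n+\frac12}$ (after dividing by $\Delta t$), then combine. Using the algebraic identity $(a-b)\cdot a=\tfrac12|a|^2-\tfrac12|b|^2+\tfrac12|a-b|^2$ on the $\bL^2(0,L)$ inner products $\int_0^L(\bv^{n+\frac12}-\bv^n)\bv^{n+\frac12}$ and, via ${\bfeta}^{n+\frac12}-{\bfeta}^n=(\Delta t)\bv^{n+\frac12}$, on the $\bH^2_0$-pairing $\langle\sL_e({\bfeta}^{n+\frac12}),{\bfeta}^{n+\frac12}-{\bfeta}^n\rangle$ (here one uses self-adjointness of $\sL_e$ so that $\langle\sL_e\cdot,\cdot\rangle$ is a genuine inner product on $\bH^2_0$ and the half-sum identity applies, giving the $\tfrac12\|{\bfeta}^{n+\frac12}-{\bfeta}^n\|_{\bH^2_0}^2$ numerical-dissipation term), the $\partial_{zz}$ penalty term produces exactly $D^n_1=\ep(\Delta t)\int_0^L|\partial_{zz}\bv^{n+\frac12}|^2$. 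Since $\bu^{n+\frac12}=\bu^n$ and $J^n_*$ is unchanged in this sub-step, the fluid part of $E^{n+\frac12}$ equals that of $E^n$, and collecting terms yields $E^{n+\frac12}+D^n_1+C^n_1=E^n$.

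The main obstacle is not existence — that is a routine Lax–Milgram argument — but the $\sF_{t^n}$-measurability of the solution, which must be handled carefully because the coefficients of the variational problem are themselves random (through ${\bfeta}^n$). The cleanest route is the Galerkin approximation with deterministic basis plus Cramer's rule for measurability of the finite-dimensional solutions, together with a uniform (in the Galerkin dimension, pathwise) energy bound coming from coercivity, so that the weak limit is measurable and solves \eqref{first}. One should also be slightly careful that $\langle\sL_e(\cdot),\cdot\rangle$ defines an inner product equivalent to $\|\cdot\|_{\bH^2_0}^2$ (guaranteed by the stated continuity and coercivity of $\sL_e$), so that the term $\tfrac12\|{\bfeta}^{n+\frac12}-{\bfeta}^n\|_{\bH^2_0}^2$ in $C^n_1$ is understood with respect to the norm induced by $\sL_e$ — or, equivalently, the coercivity constants get absorbed and the identity holds in that norm; I would state this convention explicitly.
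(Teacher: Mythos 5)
Your proof is correct and follows essentially the same route as the paper: the energy identity is obtained exactly as in the paper by using $\bv^{n+\frac12}=({\bfeta}^{n+\frac12}-{\bfeta}^n)/\Delta t$, testing \eqref{first}$_3$ with $\bv^{n+\frac12}$, applying $a(a-b)=\frac12(|a|^2-|b|^2+|a-b|^2)$ (with the $\sL_e$-induced norm convention, which the paper uses implicitly), and adding the unchanged fluid energy to both sides. The existence and measurability part, which the paper dispatches by citing an external reference, is filled in by your standard Lax--Milgram plus Galerkin/selection argument, which is a perfectly adequate substitute.
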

	\begin{proof}
		The proof of existence and uniqueness of measurable solutions is straight-forward and the reader is referred to \cite{KC24} for details. This allows us to write:
		$$\bv^{n+\frac12}=\frac{{\bfeta}^{n+\frac12}-{\bfeta}^n}{\Delta t}.$$
		We now take $\bfpsi=\bv^{n+\frac12}$ in \eqref{first}$_3$ and using $a(a-b)=\frac12(|a|^2-|b|^2+|a-b|^2)$,  we obtain,
		\begin{equation}\label{energy1_1}
		\begin{split}	
				&\|\bv^{n+\frac12}\|_{\bL^2(0,L)}^2 +\|\bv^{n+\frac12}-\bv^n\|_{\bL^2(0,L)}^2  +\|{\bfeta}^{n+\frac12}\|_{\bH_0^2(0,L)}^2+\|{\bfeta}^{n+\frac12}-{\bfeta}^n\|_{\bH_0^2(0,L)}^2\\ 
				&+\ep(\Delta t)\|\partial_{zz}\bv^{n+\frac12}\|^2_{\bL^2(0,L)}
				=\|\bv^n\|_{\bL^2(0,L)}^2+ \|{\bfeta}^{n}\|_{\bH_0^2(0,L)}^2.
		\end{split}
		\end{equation}
		Recalling that $\bu^n=\bu^{n+\frac12}$ and adding the relevant terms on both sides of \eqref{energy1_1} we obtain \eqref{energy1}.
	\end{proof}
	\begin{lem}({\bf{Existence for the fluid sub-problem.}})\label{existu}
		For a given $\delta=(\delta_1,\delta_2)$ satisfying \eqref{delta}, and given $\sF_{t^n}$-measurable random variables $(\bu^{n+\frac12},\bv^n)$ taking values in $\sU$ and $\bv^{n+\frac12}$ taking values in $\bH^2_0(0,L)$, there exists an $\sF_{t^{n+1}}$-measurable random variable $\bU^{n+1}=(\bu^{n+1},\bv^{n+1})$ taking values in $\sU$ that solves \eqref{second}, and the solution satisfies the following estimate:
		\begin{equation}\label{energy2}
			\begin{split}
				E^{n+1}&+D_2^{n}+C_2^{n}\leq E^{n+\frac12} +C\Delta t((P^n_{in})^2+(P^n_{out})^2) 
				+ {C}\|\Delta_nW\|_{U_0}^2
				\|G(\bu^{n},{\bfeta}_*^n)\|_{L_2(U_0;\bL^2)}^2\\
				&+|
				\left( G(\bu^{n},{\bfeta}_*^n)\Delta_n W, (\bu^{n},\bv^n)\right)_{\bL^2} |+\frac14\int_0^L|\bv^{n+\frac12}-\bv^n|^2dz,
		\end{split}\end{equation}
		where
		$$C_2^{n}:=\frac14\int_{\sO} \left( J^n_*|\bu^{n+1}-\bu^{n}|^2 \right) d\bx +\frac14\int_0^L|\bv^{n+1}-\bv^{n+\frac12}|^2 dz,$$
		is numerical dissipation, and ${\bfeta}^n_*$ is as defined in \eqref{eta*}.
	\end{lem}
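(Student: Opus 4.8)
The plan is to rewrite \eqref{second} as a single stationary Navier--Stokes-type variational equation for $\bu^{n+1}$ on the fixed space $V$, solve it by a Galerkin method, and then obtain \eqref{energy2} by testing with the solution itself. First I would eliminate $\bv^{n+1}$: taking $(\bq,\bfpsi)=(0,\bfpsi)$ in \eqref{second} shows that for a.e.\ $z\in(0,L)$ the value $\bv^{n+1}(z)$ solves a $2\times 2$ linear system with matrix $I+\frac{\Delta t}{\ep}\,\bn^n_*\otimes\bn^n_*+\frac{\Delta t}{\alpha}S^n I$, which is symmetric and uniformly positive definite because $S^n$ is bounded below by a deterministic constant (a consequence of \eqref{boundJ}); solving it expresses $\bv^{n+1}$ as a bounded affine function of $\bv^{n+\frac12}$, the trace $\bu^{n+1}|_\Gamma$, and $G(\bu^n,{\bfeta}^n_*)\Delta_nW$. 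Substituting this back into the $(\bq,0)$-part of \eqref{second} leaves the problem: find $\bu^{n+1}\in V$ with $\mathcal A(\bu^{n+1},\bq)+(\Delta t)\,b^{{\bfeta}^n_*}(\bu^{n+1},\bw^n_*,\bq)=\langle\mathcal F,\bq\rangle$ for every $\bq\in V$, where $\mathcal A$ is a bounded bilinear form on $V$, $b^{{\bfeta}^n_*}$ is the ALE advection form of Section~\ref{sec:ale}, and $\mathcal F\in V^*$ collects the data.

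Two structural facts make this solvable. First, $b^{{\bfeta}^n_*}(\bu,\bw^n_*,\bu)=0$ identically, since the two summands in its definition coincide when the test function equals $\bu$; thus the advection term does not interfere with energy estimates. Second, $\mathcal A$ is coercive on $V$: the mass terms contribute $\delta_1\|\bu\|_{\bL^2(\sO)}^2$, while the viscous term $2\nu(\Delta t)\int_\sO J^n_*|\bD^{{\bfeta}^n_*}(\bu)|^2$ together with the divergence penalty controls $\|\bu\|_V^2$ up to a lower-order $\bL^2$-term via Korn's inequality on the ALE-transformed configuration --- with a constant independent of $\omega$ thanks to the deterministic bound \eqref{boundJ} on $\nabla A^\omega_{{\bfeta}^n_*}$ --- so for $\Delta t$ small (harmless, as $\Delta t=T/N\to 0$) one gets $\mathcal A(\bu,\bu)\ge c_0(\Delta t)\|\bu\|_V^2$. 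I would then run a Galerkin scheme in $V$: the finite-dimensional systems are solvable by the usual corollary of Brouwer's theorem (testing the discrete equation with its solution removes the advection term and yields an $m$-uniform $V$-bound), and one passes to the limit using the compact embedding $\bH^1(\sO)\hookrightarrow\bL^4(\sO)$ in two dimensions for the quadratic advection term. This gives $\bu^{n+1}\in V$, and $\bv^{n+1}\in\bL^2(0,L)$ is recovered from the affine relation above. (Alternatively, one may view the map as a compact perturbation of a coercive linear one and invoke Leray--Schauder or the surjectivity of coercive pseudomonotone operators.)

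For $\sF_{t^{n+1}}$-measurability, observe that $J^n_*,\bw^n_*,\bn^n_*,S^n$ are $\sF_{t^n}$-measurable functions of $({\bfeta}^k)_{k\le n}$ (by induction and the structure sub-problem lemma), $\bu^{n+\frac12}=\bu^n$ and $\bv^{n+\frac12}$ are $\sF_{t^n}$-measurable, and $G(\bu^n,{\bfeta}^n_*)\Delta_nW$ is $\sF_{t^{n+1}}$-measurable; hence all data of the reduced problem is $\sF_{t^{n+1}}$-measurable, and $\bU^{n+1}$ can be chosen $\sF_{t^{n+1}}$-measurable, either by a measurable selection theorem applied to the closed, nonempty, $\omega$-dependent solution set, or --- since the solution is unique for $\Delta t$ small --- directly from continuity of the solution operator in the data. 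For the estimate \eqref{energy2} I would test \eqref{second} with $(\bq,\bfpsi)=(\bu^{n+1},\bv^{n+1})$ and apply $a(a-b)=\frac12(|a|^2-|b|^2+|a-b|^2)$ to the discrete time-derivative terms: the mass terms produce $E^{n+1}-E^{n+\frac12}$ together with $C_2^n$ and a further copy of the same numerical dissipation; the advection term vanishes; the viscous, divergence-penalty, normal-penalty and slip terms are nonnegative and assemble (using Korn and the lower bounds from \eqref{boundJ}) into $D_2^n$. On the right, the inlet/outlet terms are bounded by $C(\Delta t)((P^n_{in})^2+(P^n_{out})^2)$ by the trace theorem and Young's inequality, and the stochastic term is split as $(G(\bu^n,{\bfeta}^n_*)\Delta_nW,(\bu^n,\bv^n))_{\bL^2}$ --- kept as is --- plus an increment term bounded, via Young's inequality, by $C\|\Delta_nW\|_{U_0}^2\|G(\bu^n,{\bfeta}^n_*)\|^2_{L_2(U_0;\bL^2)}$ plus small multiples of the numerical dissipation and the stray $\frac14\int_0^L|\bv^{n+\frac12}-\bv^n|^2$; absorbing the dissipation multiples into the spare copy gives \eqref{energy2}.

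The main obstacle is the measurability: unlike the structure sub-problem, the fluid sub-problem is nonlinear and uniqueness is not available without restricting $\Delta t$, so one must either supply such a uniqueness argument or verify the hypotheses of a measurable selection theorem for the solution set of the reduced variational problem. A secondary, more technical point is quantifying Korn's inequality on the ALE-transformed domain uniformly in $\omega$, which is exactly what the deterministic bound \eqref{boundJ} on $\nabla A^\omega_{{\bfeta}^n_*}$ provides.
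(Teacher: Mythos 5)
Your proposal is correct and rests on the same three pillars as the paper's argument (which is largely deferred to \cite{TC23}): a Brouwer/Galerkin existence step, a Kuratowski--Ryll-Nardzewski measurable selection for the $\sF_{t^{n+1}}$-measurability, and the energy inequality obtained by testing \eqref{second} with $(\bu^{n+1},\bv^{n+1})$, applying $a(a-b)=\tfrac12(|a|^2-|b|^2+|a-b|^2)$, and splitting the stochastic term as $(G\Delta_n W,\bU^{n+1}-\bU^{n})+(G\Delta_n W,\bU^{n})$ with the increment absorbed into the spare half of the numerical dissipation --- your bookkeeping of the $J^n_*$ versus $J^{n+1}_*$ mass terms and of the stray $\tfrac14\int_0^L|\bv^{n+\frac12}-\bv^n|^2$ reproduces \eqref{energy2} exactly. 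The one structural difference is your preliminary elimination of $\bv^{n+1}$ via the pointwise $2\times2$ system with matrix $(1+\tfrac{\Delta t}{\alpha}S^n)I+\tfrac{\Delta t}{\ep}\,\bn^n_*\otimes\bn^n_*$. The paper instead applies Brouwer directly to the coupled form on $\sU=V\times\bL^2(0,L)$, whose separability already supplies the Galerkin basis; your reduction buys a cleaner coercivity discussion on $V$ alone, at the cost of checking that the Schur-complemented form inherits coercivity (it does, since the eliminated boundary terms are nonnegative quadratics and the mass term $\tfrac12\int_\sO(J^n_*+J^{n+1}_*)|\bu|^2\ge\delta_1\|\bu\|^2_{\bL^2(\sO)}$ supplies the lower-order part needed for Korn).

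One caveat: your fallback of ``uniqueness for $\Delta t$ small'' is not actually available. A Gronwall-type argument for the difference of two solutions requires something like $C\,\Delta t\,\|\bu^{n+1}\|^2_{\bH^1(\sO)}<\delta_1$, and the energy estimate only controls $\Delta t\,\|\nabla\bu^{n+1}\|^2_{\bL^2(\sO)}$ by the data, which is a random, unbounded quantity; no deterministic choice of $\Delta t$ gives uniqueness for all $\omega$. Hence the measurable-selection route you list first is the only one that closes, and it is the one the paper uses: the $\omega$-dependent solution set is nonempty and closed in $\sU$, the defining operator is Carath\'eodory in $(\omega,\bu,\bv)$ (measurability in $\omega$ coming from \eqref{growthG} and the $\sF_{t^n}$-measurability of $J^n_*,\bw^n_*,\bn^n_*,S^n$ that you correctly identified), and the selector theorem yields an $\sF_{t^{n+1}}$-measurable solution.
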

	\begin{proof}
	The proof of existence and measurability of the solutions is given using Brouwer's fixed point theorem and the Kuratowski selection theorem in \cite{TC23}.

		To obtain \eqref{energy1_1}, we take $(\bq,\bfpsi)=(\bu^{n+1},\bv^{n+1})$ in \eqref{second} and use the identity $a(a-b)=\frac12(|a|^2-|b|^2+|a-b|^2)$,:
		\begin{align*}
		&\frac12\int_{\sO}   J_*^n \left(|\bu^{n+1}|^2-|\bu^{n+\frac12}|^2 + |\bu^{n+1}-\bu^{n+\frac12}|^2 \right) 
		+\frac{1}2\int_{\sO} \left( J_*^{n+1}- J_*^n\right)  |\bu^{n+1}|^2 d\bx\\
		&+2(\Delta t)\int_{\sO} J_*^{n}|\bD^{{\bfeta}_*^{n}}(\bu^{n+1})|^2 d\bx+\frac{(\Delta t)}{\alpha}\int_\Gamma S^{n}|\bu^{n+1}-\bv^{n+1}|^2\\
		&+ \frac{(\Delta t)}{\ep}\int_{\sO} |
		\text{div}^{{\bfeta}^n_*}\bu^{n+1}|^2 d\bx +\frac{(\Delta t)}{\ep}\int_{\Gamma}| (\bu^{n+1}-\bv^{n+1})\cdot\bn^n_*|^2dz \\
		&+\frac12\int_0^L|\bv^{n+1}|^2-|\bv^{n+\frac12}|^2+|\bv^{n+1}-\bv^{n+\frac12}|^2 dz \\
		&= (\Delta t)\left( P^n_{{in}}\int_{0}^1u^{n+1}_z\Big|_{z=0}dr-P^n_{{out}}\int_{0}^1u^{n+1}_z\Big|_{z=1}dr\right) \\
		&+( G(\bu^{n},{\bfeta}_*^n)\Delta_n W, (\bU^{n+1}-\bU^{n}))+
		(G(\bu^{n},{\bfeta}_*^n)\Delta_n W, \bU^{n}).
	\end{align*}
		Here we split the discrete stochastic integral into two terms. We estimate the first term by using the Cauchy-Schwarz inequality: for some $C(\delta)>0$ independent of $n$ the following holds: 
		\begin{align*}&| 
			(G(\bu^{n}, {\bfeta}_*^n)\Delta_n W, ((\bu^{n+1},\bv^{n+1})-(\bu^{n},\bv^n) ))_{\bL^2}|\leq C\|\Delta_nW\|_{U_0}^2 \|G(\bu^{n},{\bfeta}_*^n)\|_{L_2(U_0,\bL^2)}^2 \\ &+\frac14\int_{\sO}
			J_*^n	| \bu^{n+1}-\bu^{n}|^2d\bx +\frac18\int_0^L|\bv^{n+1}-\bv^{n}|^2dz\\
			&\leq {C}\|\Delta_nW\|_{U_0}^2 \|G(\bu^{n},{\bfeta}_*^n)\|_{L_2(U_0;\bL^2)}^2 +\frac14\int_{\sO}
		J_*^n	| \bu^{n+1}-\bu^{n}|^2d\bx\\
			&+ \frac14\int_0^L|\bv^{n+1}-\bv^{n+\frac12}|^2dz+\frac14\int_0^L|\bv^{n+\frac12}-\bv^{n}|^2dz.
		\end{align*}
		We treat the terms with $P_{in/out}$ similarly to obtain \eqref{energy2}.
	\end{proof}
	Next, we will obtain uniform estimates on the expectation of the kinetic and elastic energy and dissipation of the coupled problem.
	\begin{thm}{\bf (Uniform Estimates.)}\label{energythm}
		There exists a constant $C>0$ that depends on the initial data, $\delta$, $T$, and $P_{in/out}$ and is independent of $N$ and $\ep$ such that
		\begin{enumerate}
			\item $\bE\left( \max_{1\leq n \leq N}E^{n}\right)  <C$, $ \bE\left( \max_{0\leq n\leq N-1}E^{n+\frac12}\right) <C.$
			\item $\bE\sum_{n=0}^{N-1} D^n <C$.
			\item $\bE\sum_{n=0}^{N-1}\int_{\sO}J_*^n |\bu^{n+1}-\bu^{n}|^2  d\bx+\|\bv^{n+1}-\bv^{n+\frac12}\|_{\bL^2(0,L)}^2 <C.$
			\item $\bE \sum_{n=0}^{N-1} \|\bv^{n+\frac12}-\bv^n\|_{\bL^2(0,L)}^2   + \|{\bfeta}^{n+1}-{\bfeta}^{n}\|_{\bH^2(0,L)}^2  <C,$
		\end{enumerate}
		where $D^n=D^n_1+D^n_2$ (see Definitions \eqref{EnDn}).
	\end{thm}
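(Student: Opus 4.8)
The strategy is the standard one for establishing stability of a Lie-splitting scheme: sum the two semidiscrete energy inequalities over the time steps, take expectations, and close the estimate via a discrete Gronwall inequality. Adding \eqref{energy1} and \eqref{energy2} at a fixed level $n$, the terms $E^{n+\frac12}$ cancel, as does the numerical dissipation term $\frac14\int_0^L|\bv^{n+\frac12}-\bv^n|^2$ appearing on the right of \eqref{energy2} against half of $C_1^n$ on the left of \eqref{energy1} (there is a surplus $\frac14\int_0^L|\bv^{n+\frac12}-\bv^n|^2$ left on the left-hand side, which is harmless and in fact gives item (4)). This yields, for each $n$,
\begin{equation*}
	E^{n+1}+D^n_1+D^n_2+\widetilde C^n_1+C^n_2 \leq E^n + C\Delta t((P^n_{in})^2+(P^n_{out})^2) + C\|\Delta_n W\|_{U_0}^2\|G(\bu^n,\bfeta^n_*)\|_{L_2(U_0;\bL^2)}^2 + |(G(\bu^n,\bfeta^n_*)\Delta_n W,(\bu^n,\bv^n))_{\bL^2}|,
\end{equation*}
where $\widetilde C^n_1$ collects the leftover numerical dissipation from the structure step. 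Summing from $n=0$ to $m-1\leq N-1$ gives $E^m + \sum_{n=0}^{m-1}(D^n+\widetilde C^n_1 + C^n_2)\leq E^0 + C\sum_{n}\Delta t((P^n_{in})^2+(P^n_{out})^2) + \sum_n \|\Delta_n W\|_{U_0}^2\|G(\bu^n,\bfeta^n_*)\|^2 + \sum_n|(G(\bu^n,\bfeta^n_*)\Delta_n W,(\bu^n,\bv^n))|$.

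The main work is in handling the two stochastic contributions after taking the supremum over $m$ and then expectations. For the quadratic term, I would use that $\bE\|\Delta_n W\|_{U_0}^2 = \Delta t\,\mathrm{Tr}(Q)$ is $\sF_{t^n}$-independent, so by the growth bound \eqref{growthG} and the tower property, $\bE\sum_n \|\Delta_n W\|_{U_0}^2\|G(\bu^n,\bfeta^n_*)\|^2 \leq C\Delta t\sum_n \bE(\|\bu^n\|_{\bL^2(\sO)}^2 + \|\bfeta^n_*\|_{\bH^2_0}^2)$; since $\bfeta^n_*$ is one of the earlier $\bfeta^k$ (a stopped sequence) its $\bH^2_0$ norm is controlled by $\max_{k\leq n}E^k$, so this is $\leq C\Delta t\sum_n \bE\max_{k\leq n}E^k$. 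For the cross term $\sum_n (G(\bu^n,\bfeta^n_*)\Delta_n W,(\bu^n,\bv^n))$, the key point is that it is a martingale in $m$ (each summand is $\sF_{t^n}$-measurable tested against the increment $\Delta_n W$), so by the Burkholder--Davis--Gundy inequality applied to this discrete martingale,
\begin{equation*}
	\bE\sup_{m\leq N}\Big|\sum_{n=0}^{m-1}(G(\bu^n,\bfeta^n_*)\Delta_n W,(\bu^n,\bv^n))\Big| \leq C\,\bE\Big(\sum_{n=0}^{N-1}\Delta t\,\|G(\bu^n,\bfeta^n_*)\|^2(\|\bu^n\|^2+\|\bv^n\|^2)\Big)^{1/2},
\end{equation*}
and then I would pull out $\sup_n(\|\bu^n\|^2+\|\bv^n\|^2)^{1/2}\lesssim (\max_n E^n)^{1/2}$, apply Young's inequality in the form $ab\leq \gamma a^2 + \frac{1}{4\gamma}b^2$ to absorb $\gamma\,\bE\max_n E^n$ into the left-hand side, leaving a term of the form $C\Delta t\sum_n \bE\max_{k\leq n}E^k$.

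After these absorptions, one is left with $\bE\max_{1\leq m\leq N}E^m + \bE\sum_n(D^n + C^n_2 + \widetilde C^n_1) \leq C(E^0, T, P_{in/out}) + C\Delta t\sum_{n=0}^{N-1}\bE\max_{k\leq n}E^k$, and the discrete Gronwall lemma gives $\bE\max_m E^m \leq C e^{CT}$, uniformly in $N$ and $\ep$; feeding this back yields the bound on $\bE\sum_n D^n$ (item (2)) and the bounds on the numerical dissipation (items (3) and (4)). The bound on $\bE\max_n E^{n+\frac12}$ (second half of item (1)) follows since \eqref{energy1} shows $E^{n+\frac12}\leq E^n$ pathwise. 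The main obstacle I anticipate is the bookkeeping needed to ensure all constants are genuinely independent of $\ep$ and $N$ — in particular, that the Korn-type constant controlling $\|\bu^{n+1}\|_{\bH^1}$ by the $\bD^{\bfeta^n_*}$-dissipation depends only on $\delta$ (via the uniform bounds \eqref{boundJ} on the ALE Jacobians on the artificial domains), and that the penalty terms, being nonnegative, only help; and of course verifying that the martingale/BDG argument is legitimate for the discrete stochastic sum, i.e.\ that the integrand is adapted at the correct level.
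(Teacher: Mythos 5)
Your proposal is correct and follows essentially the same route as the paper: add the two semidiscrete energy identities, sum in $n$, take the supremum and expectation, control the martingale sum by the discrete Burkholder--Davis--Gundy inequality plus Young's inequality to absorb $\bE\max_m E^m$, control the quadratic noise term by the tower property and \eqref{growthG}, and close with the discrete Gronwall lemma. Your bookkeeping of the leftover numerical dissipation $\widetilde C^n_1$ (which yields item (4)) and the observation that $E^{n+\frac12}\leq E^n$ pathwise are exactly the points the paper relies on as well.
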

	\begin{proof} We first add \eqref{energy1} and \eqref{energy2} to obtain:
		\begin{equation}
			\begin{split}\label{discreteenergy}
			&	E^{n+1}+D^{n} +C^n_1+C^{n}_2\leq E^{n}+ C\Delta t((P^n_{in})^2+(P^n_{out})^2) \\
				&+C\|\Delta_nW\|_{U_0}^2
				\|G(\bu^{n},{\bfeta}_*^n)\|_{L_2(U_0;\bL^2(\sO))}^2+\left|
				(G(\bu^{n},{\bfeta}_*^n)\Delta_n W, \bU^{n}) \right|.
			\end{split}
		\end{equation}
		Then for any $m\geq 1$, summing $0\leq n\leq m-1$ gives us
		\begin{equation}\label{energysum}
			\begin{split}
				&E^m+\sum_{n=0}^{m-1}D^{n} +\sum_{n=0}^{m-1}C_1^{n}+\sum_{n=0}^{m-1}C_2^{n}
				\leq E^0+ C\,\Delta t\sum_{n=0}^{m-1}\left( (P^n_{{in}})^2 +(P^n_{{out}})^2\right) \\
				&+\sum_{n=0}^{m-1}\left|
				(G(\bu^n,{\bfeta}_*^n)\Delta_n W, \bU^{n} )\right|
				+\sum_{n=0}^{m-1} \|G(\bu^{n},{\bfeta}_*^n)\|_{L_2(U_0;\bL^2)}^2\|\Delta_nW\|_{U_0}^2.
		\end{split}\end{equation}
We take supremum over $1 \leq m \leq N$ and then take expectation on both sides of \eqref{energysum}.  
		The discrete Burkholder-Davis-Gundy inequality, \eqref{eta*}
		and \eqref{boundJ} give us for some $C(\delta)>0$ that,
		\begin{align*}
			\bE&\max_{1\leq m\leq N}|\sum_{n=0}^{m-1}
			(G(\bu^{n},{\bfeta}_*^n)\Delta_n W, \bU^{n} )| \\
			&\leq {C}\bE\left[\Delta t\sum_{n=0}^{N-1}
			\|G(\bu^{n},{\bfeta}_*^n)\|^2_{L_2(U_0,\bL^2)}\left( \left\|(\sqrt{J_*^n})\bu^{n}\right\|^2_{\bL^2(\sO)}+\|\bv^n\|_{\bL^2(0,L)}^2\right) \right]^{\frac12}\\
			& \leq {C}{}\bE\left[ \max_{0\leq m\leq N} \left( \left\|(\sqrt{J_*^m})\bu^{m}\right\|^2_{\bL^2(\sO)} + \|\bfeta^m_*\|^2_{\bH^2_0(0,L)}\right) \sum_{n=0}^{N-1}
			\Delta t \left( \|\sqrt{J^n_*}\bu^{n}\|^2_{\bL^2(\sO)}+\|\bv^n\|_{\bL^2(0,L)}^2\right) \right]^{\frac12}\\
			&\leq \frac{1}2E_0+ \frac12\bE\max_{1\leq m\leq N}\left[ \left\|(\sqrt{J_*^m})\bu^{m}\right\|^2_{\bL^2(\sO)}+\|\bfeta^m\|^2_{\bH^2_0(0,L)}\right] \\
			&+ {C \Delta t}\bE\left( \sum_{n=0}^{N-1}
			\|(\sqrt{J^n_*})\bu^n\|^2_{\bL^2(\sO)}+\|\bv^n\|_{\bL^2(0,L)}^2\right) .
		\end{align*}		
We use the tower property and \eqref{growthG} for each $n=0,...,m-1$
		to obtain,
		\begin{align}
			\bE[\|G(\bu^n,{\bfeta}_*^n)\|_{L_2(U_0,\bL^2)}^2\|\Delta_nW\|_{U_0}^2]&=\bE[\bE[\|G(\bu^n,{\bfeta}_*^n)\|_{L_2(U_0,\bL^2)}^2\|\Delta_nW\|_{U_0}^2|\sF_n]]\notag\\
			&=\bE[\|G(\bu^n,{\bfeta}_*^n)\|_{L_2(U_0,\bL^2)}^2\bE[\|\Delta_nW\|_{U_0}^2\|\sF_n]]\notag\\
			&=\Delta t(\Tr \bQ)\bE\|G(\bu^n,{\bfeta}_*^n)\|^2_{L_2(U_0,\bL^2)}\notag\\
			&\leq C\Delta t(\Tr \bQ)\bE\left( \|\sqrt{J_*^n}\bu^n\|^2_{\bL^2(\sO)}+\|\bfeta^n_*\|^2_{\bH^2_0(0,L)}\right) \label{tower}.
		\end{align}
		Thus, for some $C>0$ depending on $\delta$ and on $\Tr{\bQ}$, the following holds:
		\begin{align*}
			\bE\max_{1\leq n\leq N}&\left( \|(\sqrt{J^n_*})\bu^n\|^2_{\bL^2(\sO)} +\|\bv^n\|^2_{\bL^2(0,L)}\right) \leq CE^0+ C\|P_{in/out}\|^2_{L^2(0,T)}\\
			& +C\sum_{n=1}^{N-1} \Delta t\bE \max_{1\leq m\leq n}\left(\|(\sqrt{J^m_*})\bu^m\|^2_{\bL^2(\sO)}+\|\bv^m\|^2_{\bL^2(0,L)}\right) .
		\end{align*}
		The discrete Gronwall inequality finally gives us,
		$$\bE\max_{1\leq n \leq N}\left( \|(\sqrt{J^n_*})\bu^n\|_{\bL^2(\sO)}^2+\|\bv^n\|^2_{\bL^2(0,L)}\right) \leq C e^T,$$
		where $C$ depends only on the given data and in particular $\delta$.
		
		Hence for $E^n,D^n$ defined in \eqref{EnDn} we have,
		\begin{equation}
			\begin{split}
				\bE\max_{1\leq n\leq N}E^{n}+ \bE\sum_{n=0}^{N-1}(D^n+C_1^{n}+C_2^{n}) \leq C(\delta)(E^0  + \|P_{in/out}\|^2_{L^2(0,T)}
				+ Te^T).
			\end{split}
		\end{equation}
	\end{proof}
	
	\subsection{Approximate solutions}\label{subsec:approxsol}
	We use the solutions $(\bu^{n+\frac{i}2},{\bfeta}^{n+\frac{i}2},\bv^{n+\frac{i}2})$, $i=0,1$, defined for every $N \in \mathbb{N}\setminus \{0\}$ at discrete times to define approximate solutions on the entire interval $(0,T)$. First we introduce approximate solutions that are piece-wise constant on each sub-interval $ [n\Delta t, (n+1)\Delta t)$: 
	\begin{align}\label{AppSol}
		\bu_{N}(t,\cdot)=\bu^n, \,
		{\bfeta}_{N}(t,\cdot)={\bfeta}^n, {\bfeta}_{N}^*(t,\cdot)={\bfeta}_*^{n},  \bv_{N}(t,\cdot)=\bv^n,  {\bv}^{\#}_{N}(t,\cdot)=\bv^{n+\frac12}.
	\end{align}
	These processes are adapted to the given filtration $(\sF_t)_{t \geq 0}$.	The following are their time-shifted versions which are commonly used in deterministic settings: 
	\begin{align*}
		\bu^+_{N}(t,\cdot)=\bu^{n+1},\quad {\bfeta}^+_{N}(t,\cdot)={\bfeta}^{n+1}\quad {\bv}^+_{N}(t,\cdot)={\bv}^{n+1}\quad  t \in (n\Delta t, (n+1)\Delta t].
	\end{align*}
	We also define the corresponding piece-wise linear interpolations, for $t \in [t^n,t^{n+1}]$:
	\begin{equation}
		\begin{split}\label{approxlinear}
			&\tilde\bu_{N}( t,\cdot)=\frac{t-t^n}{\Delta t} \bu^{n+1}+ \frac{t^{n+1}-t}{\Delta t} \bu^{n}, \quad \tilde \bv_{N}(t,\cdot)=\frac{t-t^n}{\Delta t} \bv^{n+1}+ \frac{t^{n+1}-t}{\Delta t} \bv^{n}\\
			& \tilde{\bfeta}_{N}( t,\cdot)=\frac{t-t^n}{\Delta t} {\bfeta}^{n+1}+ \frac{t^{n+1}-t}{\Delta t} {\bfeta}^{n}, \quad \tilde{\bfeta}^*_{N}( t,\cdot)=\frac{t-t^n}{\Delta t} {\bfeta}^{n+1}_*+ \frac{t^{n+1}-t}{\Delta t} {\bfeta}^{n}_*.
		\end{split}
	\end{equation}
	Observe that,
	\begin{align}\label{etaderi}
		\frac{\partial\tilde{\bfeta}_{N}}{\partial t}=\bv^{\#}_{N},\quad \frac{\partial\tilde{\bfeta}^*_{N}}{\partial t}
		={\sum_{n=0}^{N-1}\theta_\delta({\bfeta}^{n+1})}\bv^{\#}_{N}\chi_{(t^n,t^{n+1})}:=\bv_{N}^*
		\quad a.e. \text{ on } (0,T),  
	\end{align}
		where $\bv^{\#}_N$ is introduced in \eqref{AppSol}.	
		In the following Lemma we summarize the results that are immediate consequences of the estimates obtain in Theorem \ref{energythm}.
	\begin{lem}\label{bounds}
		Given
		$\bu_0 \in \bL^2(\sO)$, ${\bfeta}_0 \in  \bH^2_0(0,L)$, $\bv_0 \in \bL^2(0,L)$, 
		for a fixed $\delta=(\delta_1,\delta_2)$ satisfying \eqref{delta}, we have that 
		\begin{enumerate}
			\item $\{{\bfeta}_{N}\},\{{\bfeta}^+_{N}\},\{{\bfeta}_{N}^*\}$ and thus $\{\tilde {\bfeta}_{N}\},\{\tilde{\bfeta}_{N}^*\}$ are bounded independently of $N$ and $\ep$ in\\ $L^2(\Omega;L^\infty(0,T;\bH^2_0(0,L)))$.
			\item $\{\bv_{N}\},\{{\bv}^+_{N}\},\{\bv_{N}^{\#}\},\{\bv_{N}^{*}\}$ are bounded independently of $N$ and $\ep$ in\\ $L^2(\Omega;L^\infty(0,T;\bL^2(0,L)))$.
			\item $\{\bu_{N}\}$ is bounded independently of $N$ and $\ep$ in\\ $L^2(\Omega;L^\infty(0,T;\bL^2_{}(\sO_{})))\cap L^2(\Omega;L^2(0,T;\bL^2_{}(\sO_{})))$.
			\item $\{\bu^+_{N}\}$ is bounded independently of $N$ and $\ep$ in 
			$		 L^2(\Omega;L^2(0,T;\bH^1(\sO)).$
			\item $\{\frac1{\sqrt{\ep}}\text{div}^{{\bfeta}^*_{N}}\bu^+_{N}\}$ is bounded independently of $N$ and $\ep$ in $L^2(\Omega;L^2(0,T;L^{2}(\sO)))$.
			\item $\{\frac1{\sqrt{\ep}}(\bu^+_N-\bv^+_N)\cdot\bn^*_N\}$ is bounded independently of $N,\ep$ in $L^2(\Omega;L^2(0,T;L^{2}(\Gamma)))$
			\item $\{\sqrt{\ep}\bv^{\#}_N\}$ is bounded independently of $N$ and $\ep$ in $L^2(\Omega;L^2(0,T;\bH_0^{2}(0,L)))$.
		\end{enumerate} 
	\end{lem}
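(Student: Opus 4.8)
The strategy is to obtain every bound by unpacking the definitions \eqref{EnDn} of the discrete energies $E^{n+\frac i2}$ and dissipations $D^n_1,D^n_2$ and feeding in Theorem \ref{energythm}, together with two structural facts about the artificial configurations: the Jacobians obey the deterministic lower bound $J^n_*\ge\delta_1$, while \eqref{delta}--\eqref{boundJ} give $\|\nabla A^\omega_{{\bfeta}^n_*}\|_{\bC^{0,1/p}(\bar\sO)}\le C(\delta)$ uniformly in $n$ and $\omega$. Since each piecewise-constant interpolant in \eqref{AppSol} equals, at a given $t$, the corresponding discrete quantity at the time level containing $t$, its $L^\infty(0,T;X)$-norm is a maximum over $n$ of the discrete norms, and each piecewise-linear interpolant in \eqref{approxlinear} is a pointwise convex combination of two consecutive discrete values; hence it suffices to bound the discrete quantities, and the same bound is then inherited by $\tilde{\bfeta}_N,\tilde{\bfeta}^*_N$ and by $\bv^*_N$.

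Items (1)--(3) follow from the energy bound alone. Because $E^{n+\frac i2}$ controls $\|{\bfeta}^{n+\frac i2}\|^2_{\bH^2_0(0,L)}$ and $\|\bv^{n+\frac i2}\|^2_{\bL^2(0,L)}$, and $E^0$ is deterministic and finite by the hypotheses on the initial data, Theorem \ref{energythm}(1) immediately gives (1) for $\{{\bfeta}_N\},\{{\bfeta}^+_N\}$ and (2) for $\{\bv_N\},\{\bv^+_N\},\{\bv^\#_N\}$. For ${\bfeta}^*_N$, the definition \eqref{eta*} means ${\bfeta}^n_*={\bfeta}^k$ for some $k\le n$, so $\|{\bfeta}^n_*\|_{\bH^2_0}\le\max_{k\le n}\|{\bfeta}^k\|_{\bH^2_0}$; and since $\theta_\delta$ takes values in $\{0,1\}$, \eqref{etaderi} gives $\|\bv^*_N(t)\|_{\bL^2}\le\|\bv^\#_N(t)\|_{\bL^2}$, finishing (2). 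For (3), $E^n$ only controls the weighted quantity $\int_\sO J^n_*|\bu^n|^2$, but $J^n_*\ge\delta_1$ yields $\|\bu^n\|^2_{\bL^2(\sO)}\le\delta_1^{-1}\int_\sO J^n_*|\bu^n|^2\le 2\delta_1^{-1}E^n$, so $\{\bu_N\}$ is bounded in $L^2(\Omega;L^\infty(0,T;\bL^2(\sO)))$, and since $T<\infty$ this also gives the $L^2(0,T;\bL^2(\sO))$ bound.

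Items (4)--(7) come from summing the dissipation over $n=0,\dots,N-1$ and rewriting the sums as time integrals of the time-shifted processes. From $D^n_2$ and Theorem \ref{energythm}(2) one gets $\bE\sum_{n=0}^{N-1}\tfrac{\Delta t}{\ep}\int_\sO|\text{div}^{{\bfeta}^n_*}\bu^{n+1}|^2<C$ and $\bE\sum_{n=0}^{N-1}\tfrac{\Delta t}{\ep}\int_\Gamma|(\bu^{n+1}-\bv^{n+1})\cdot\bn^n_*|^2<C$, which are exactly (5) and (6); likewise summing $D^n_1$ gives $\bE\sum_n \ep\,\Delta t\,\|\partial_{zz}\bv^{n+\frac12}\|^2_{\bL^2(0,L)}<C$, and since $\bv^{n+\frac12}\in\bH^2_0(0,L)$ the $\bH^2_0$-norm is equivalent to $\|\partial_{zz}\cdot\|_{\bL^2}$, which is (7). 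The only item demanding genuine work is (4): the viscous term in $D^n_2$ controls $\bE\sum_n\Delta t\int_\sO J^n_*|\bD^{{\bfeta}^n_*}(\bu^{n+1})|^2<C$, and one must upgrade this to control of $\|\bu^{n+1}\|_{\bH^1(\sO)}$. The plan is to push $\bu^{n+1}$ forward to the physical domain $\sO_{{\bfeta}^n_*}$ via $A^\omega_{{\bfeta}^n_*}$, apply Korn's inequality there, and pull back, using \eqref{delta}--\eqref{boundJ} to ensure that the Korn constant and all change-of-variables factors depend only on $\delta$, so that $\|\bu^{n+1}\|^2_{\bH^1(\sO)}\le C(\delta)\big(\|\bu^{n+1}\|^2_{\bL^2(\sO)}+\int_\sO J^n_*|\bD^{{\bfeta}^n_*}(\bu^{n+1})|^2\big)$; combining this with the $L^2(\Omega;L^\infty(0,T;\bL^2(\sO)))$ bound on $\{\bu^+_N\}$ (from $\bE\max_m E^m<C$ together with $J^n_*\ge\delta_1$) and integrating in $t$ and $\omega$ yields (4). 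I expect establishing this $\delta$-uniform Korn inequality over the entire family of moving domains to be the main obstacle; everything else is routine bookkeeping against Theorem \ref{energythm}.
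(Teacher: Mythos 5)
Your proposal is correct and follows essentially the same route as the paper: items (1)--(3) and (5)--(7) are read off from Theorem \ref{energythm} together with the deterministic lower bound $J^n_*\ge\delta_1$, and item (4) is obtained by transporting $\bu^{n+1}$ to the physical domain via $A^\omega_{\bfeta^n_*}$, applying Korn there, and pulling back. The $\delta$-uniform Korn inequality you flag as the main obstacle is precisely what the paper invokes; it resolves it by citing Lemma 1 of \cite{V12} together with the observation that \eqref{boundJ} makes the family $\{A^\omega_{\bfeta^*_N}(t)\}$ compact in $\bW^{1,\infty}(\sO)$ (the paper uses the first Korn inequality without the $L^2$ term you retain, but your second-Korn version suffices given the $L^\infty_t\bL^2_x$ bound on $\bu^+_N$).
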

	\begin{proof}
	Observe that for each $\omega\in \Omega$, $\nabla\bu^{n+1}=\nabla^{{\bfeta}^{n}_*}\bu^{n+1} (\nabla A^\omega_{{\bfeta}^{n}_*})$. Thus we have, 
	\begin{align*}
		\delta_1 \bE\int_{\sO}|\nabla\bu^{n+1}|^2d\bx &\leq  \bE\int_{\sO}(J_*^{n})|\nabla\bu^{n+1}|^2d\bx=\bE\int_{\sO}(J^{n}_*)|\nabla^{{\bfeta}_*^{n}}\bu^{n+1}\cdot \nabla A^\omega_{{\bfeta}_*^{n}}|^2d\bx\\
		&\leq C(\delta)\bE\int_{\sO}(J^{n}_*)|\nabla^{{\bfeta}_*^{n}}\bu^{n+1}|^2d\bx \leq {K}C(\delta)\bE\int_{\sO}(J_*^{n})|\bD^{{\bfeta}_*^{n}}\bu^{n+1}|^2d\bx,
	\end{align*}
	where $K>0$ is the universal Korn constant that depends only on the reference domain $\sO$. This result follows from Lemma 1 in \cite{V12} and because of uniform bounds \eqref{boundJ} which imply that $\{ A^\omega_{\bfeta^*_N}(t); \,\omega\in\Omega, \,t\in [0,T] \}$ is compact in $\bW^{1,\infty}(\sO)$.
	Thus, thanks to Theorem \ref{energythm}, there exists $C>0$, independent of $N$, such that
	\begin{align}\label{uboundV}
		\bE\int_0^T\int_{\s}|\nabla\bu^{+}_{N}|^2d\bx ds =\bE\sum_{n=0}^{N-1} \Delta t\int_{\sO}|\nabla\bu^{n+1}|^2d\bx \leq C(\delta).
	\end{align}
		The proofs of the rest of the statements follow immediately from Theorem \ref{energythm}.
	\end{proof}
	\section{Passing $N\rightarrow\infty$}\label{sec:limit}
	In this section we will pass $N \to \infty$ by first establishing tightness of the laws of the approximate random variables defined in Section \ref{subsec:approxsol}.
	\subsection{ Tightness results}
Since we do not expect our candidate solutions to be differentiable in time, the tightness results, i.e. Lemmas \ref{tightuv} and \ref{tightl2} below, will rely on the following theorem; see \cite{Tem95, S87}:
\begin{lem}\label{compactLp}
	Denote	the translation in time by $h$ of a function $f$ by:
	$$T_h f(t,\cdot)=f(t-h,\cdot), \quad h\in \R.$$ Assume that $\mathcal{Y}_0\subset\mathcal{Y}\subset\mathcal{Y}_1$ are reflexive Banach spaces such that the embedding of $\mathcal{Y}_0$ in $\mathcal{Y}$ is compact. 
	Then for any $m>0$, the embedding	$$ \{\bu \in L^2(0,T;\mathcal{Y}_0):\sup_{0<h< T} \frac1{h^m}\|T_h\bu-\bu\|_{L^2(h,T;\sY_1)}<\infty\} \hookrightarrow L^2(0,T;\mathcal{Y}),$$	is compact.
\end{lem}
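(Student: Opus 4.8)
The plan is to reduce the statement to the Aubin--Lions--Simon compactness criterion in Bochner spaces, in the form recorded in \cite{S87} (see also \cite{Tem95}): a bounded family $\mathcal{K}\subset L^2(0,T;\mathcal{Y})$ is relatively compact in $L^2(0,T;\mathcal{Y})$ provided (i) for every $0<t_1<t_2<T$ the set of time averages $\{\int_{t_1}^{t_2}\bu(t)\,dt:\bu\in\mathcal{K}\}$ is relatively compact in $\mathcal{Y}$, and (ii) $\|T_h\bu-\bu\|_{L^2(h,T;\mathcal{Y})}\to0$ as $h\to0^+$, uniformly over $\bu\in\mathcal{K}$. So I would let $\mathcal{K}$ be an arbitrary bounded subset of the space on the left-hand side of the claimed embedding; by definition there is $M>0$ with $\|\bu\|_{L^2(0,T;\mathcal{Y}_0)}\le M$ and $\|T_h\bu-\bu\|_{L^2(h,T;\mathcal{Y}_1)}\le M h^m$ for all $\bu\in\mathcal{K}$ and all $0<h<T$. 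It then remains to verify (i) and (ii); condition (ii) here is exactly Simon's modulus-of-continuity condition after the change of variables $s=t-h$, which maps $(h,T)$ onto $(0,T-h)$.

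Condition (i) is immediate: by H\"older's inequality $\big\|\int_{t_1}^{t_2}\bu(t)\,dt\big\|_{\mathcal{Y}_0}\le(t_2-t_1)^{1/2}\,\|\bu\|_{L^2(0,T;\mathcal{Y}_0)}\le(t_2-t_1)^{1/2}M$, so these averages lie in a fixed bounded subset of $\mathcal{Y}_0$, which is precompact in $\mathcal{Y}$ because $\mathcal{Y}_0\hookrightarrow\mathcal{Y}$ compactly.

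Condition (ii) is where the hypotheses combine, and the tool is the Ehrling--Lions interpolation inequality: since $\mathcal{Y}_0\hookrightarrow\mathcal{Y}$ is compact and $\mathcal{Y}\hookrightarrow\mathcal{Y}_1$ is continuous, for every $\varepsilon>0$ there is $C_\varepsilon>0$ such that $\|v\|_{\mathcal{Y}}\le\varepsilon\|v\|_{\mathcal{Y}_0}+C_\varepsilon\|v\|_{\mathcal{Y}_1}$ for all $v\in\mathcal{Y}_0$; this is proved by the standard contradiction argument (a sequence with $\|v_n\|_{\mathcal{Y}}=1$ violating the bound is bounded in $\mathcal{Y}_0$, so along a subsequence converges in $\mathcal{Y}$, while $\|v_n\|_{\mathcal{Y}_1}\to0$ forces the limit to vanish in $\mathcal{Y}_1$ and hence in $\mathcal{Y}$, a contradiction). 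Applying it to $v=(T_h\bu-\bu)(t)$ and integrating in $t$ over $(h,T)$,
\begin{equation*}
\|T_h\bu-\bu\|_{L^2(h,T;\mathcal{Y})}\le\varepsilon\,\|T_h\bu-\bu\|_{L^2(h,T;\mathcal{Y}_0)}+C_\varepsilon\,\|T_h\bu-\bu\|_{L^2(h,T;\mathcal{Y}_1)}\le 2M\varepsilon+C_\varepsilon M h^m,
\end{equation*}
using $\|T_h\bu-\bu\|_{L^2(h,T;\mathcal{Y}_0)}\le2\|\bu\|_{L^2(0,T;\mathcal{Y}_0)}\le2M$ and the defining bound in $\mathcal{Y}_1$. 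Given $\eta>0$, choosing first $\varepsilon=\eta/(4M)$ and then $h_0>0$ so small that $C_\varepsilon M h^m<\eta/2$ for $h<h_0$ yields $\|T_h\bu-\bu\|_{L^2(h,T;\mathcal{Y})}<\eta$ for every $\bu\in\mathcal{K}$, which is (ii). The criterion then gives relative compactness of $\mathcal{K}$ in $L^2(0,T;\mathcal{Y})$, i.e. the claimed compact embedding.

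I do not expect a real obstacle: the argument is a routine adaptation of Simon's theorem. The only points deserving attention are matching our translation hypothesis, stated on $(h,T)$ and valued in the weaker space $\mathcal{Y}_1$, with the precise form of Simon's condition, and noting that the exponent $m>0$ plays no role beyond ensuring $h^m\to0$. Conceptually, the content is simply that fractional-in-time control with values in $\mathcal{Y}_1$ upgrades --- through the compact embedding $\mathcal{Y}_0\hookrightarrow\hookrightarrow\mathcal{Y}$ and the Ehrling inequality --- to genuine equicontinuity in time with values in the intermediate space $\mathcal{Y}$.
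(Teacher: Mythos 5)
Your proof is correct: the paper does not prove this lemma but simply cites \cite{Tem95, S87}, and your argument (Simon's compactness criterion via relative compactness of time averages plus uniform smallness of translations, upgraded from $\mathcal{Y}_1$ to $\mathcal{Y}$ by the Ehrling interpolation inequality) is exactly the standard proof underlying those references. The only points worth double-checking are the ones you already flagged — the change of variables matching $T_h$ on $(h,T)$ to Simon's forward translation on $(0,T-h)$, and the fact that $m>0$ enters only through $h^m\to 0$ — so nothing is missing.
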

We will now obtain our first tightness result for the fluid and structure velocities by bypassing the need for higher moment estimates (see also \cite{GTW} in this context).
\begin{lem}\label{tightuv} The laws of $\bu^+_N$are tight in $L^2(0,T;\bH^{{\alpha}}(\sO))$, and that of  $\bv^+_N$ are tight in $ L^2(0,T;\bH^{-{\beta}}(0,L))$ for any $0\leq  \alpha<1$, $0<\beta<\frac12$. 
\end{lem}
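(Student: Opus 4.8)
The plan is to apply the compactness criterion of Lemma~\ref{compactLp} with a suitable Gelfand triple for each of the two families, and then invoke Prokhorov's theorem. For the fluid velocity $\bu^+_N$ I would take $\sY_0=\bH^1(\sO)$, $\sY=\bH^\alpha(\sO)$ with $0\le\alpha<1$, and $\sY_1=\bH^{-s}(\sO)$ for some convenient $s>0$; for the structure velocity $\bv^+_N$ I would take $\sY_0=\bL^2(0,L)$, $\sY=\bH^{-\beta}(0,L)$ with $0<\beta<\frac12$, and $\sY_1=\bH^{-2}(0,L)$. The first ingredient — the bound in $L^2(\Omega;L^2(0,T;\sY_0))$ — is already supplied by Lemma~\ref{bounds}: item (4) gives $\bu^+_N$ bounded in $L^2(\Omega;L^2(0,T;\bH^1(\sO)))$, and items (2) give $\bv^+_N$ bounded in $L^2(\Omega;L^\infty(0,T;\bL^2(0,L)))$. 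So the real work is the fractional time-translation estimate: I must show that for some $m>0$,
\begin{align*}
	\bE\Big[\sup_{0<h<T}\frac1{h^m}\|T_h\bu^+_N-\bu^+_N\|_{L^2(h,T;\sY_1)}\Big]
\end{align*}
is bounded uniformly in $N$ and $\ep$, and similarly for $\bv^+_N$; then Markov's inequality converts these moment bounds, together with the $L^2(\Omega;L^2(0,T;\sY_0))$ bound, into tightness of the laws on the compact-embedding space $L^2(0,T;\sY)$.

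To get the time-translation estimate I would test the discrete scheme against time-independent test functions and read off the increments. Concretely, taking in \eqref{second} a test pair $(\bq,\bfpsi)$ that is a fixed element of $\sU$ (and, for the structure part, using the first sub-problem equation \eqref{first}), one expresses $(\bU^{n+1}-\bU^{n},(\bq,\bfpsi))_{\bL^2}$ as $\Delta t$ times a sum of terms — the viscous term, the (ALE) advection term, the penalty terms, the pressure data, the elasticity operator — plus the stochastic increment $(G(\bu^n,\bfeta^n_*)\Delta_nW,(\bq,\bfpsi))$. Each deterministic term is controlled, after summing over the $n$ with $t^n\in[t,t+h)$, by $h$ (or $\sqrt h$, after Cauchy--Schwarz in time) times a norm of $(\bq,\bfpsi)$ in $\sU$ and a quantity whose expectation is bounded by Theorem~\ref{energythm} and Lemma~\ref{bounds}; this is exactly where the choice of a sufficiently negative $\sY_1$ (so that $\sU$, or at least $\bH^1(\sO)\times\bH^2_0(0,L)$, embeds densely with $\sY_1$ as dual) is used, because it lets us bound $\|(\bq,\bfpsi)\|_{\sU}$ by $\|(\bq,\bfpsi)\|_{\sY_1^*}$ on a dense set and hence identify the increment norm in $\sY_1$. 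The stochastic term is handled separately: its contribution to $\|T_h\bu^+_N-\bu^+_N\|_{L^2(h,T;\sY_1)}^2$ is, after squaring and taking expectation, a sum over a window of $\Delta_nW$-increments, and by the Itô isometry / independence of increments and Assumption~\ref{G} its expectation is bounded by $C\,h\,\bE\sum_n\Delta t\|G(\bu^n,\bfeta^n_*)\|_{L_2(U_0;\bL^2)}^2\le C h$, which gives the estimate with exponent $m=\tfrac12$ for the stochastic part (and the deterministic part yields $m=1$, or $m=\tfrac12$ after Cauchy--Schwarz, so one takes the smaller exponent overall). Thus one obtains a uniform bound of the required form with, say, $m$ any number in $(0,\tfrac12)$.

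The main obstacle I anticipate is the ALE advection term $b^{\eta}$ in \eqref{second}: unlike in a fixed-domain Navier--Stokes problem, the bilinear term involves $J^n_*$, $\nabla^{\bfeta^n_*}$, and the grid velocity $\bw^n_*$, all depending on the (artificial) structure displacement. One must estimate $\int_\sO J^n_*(\bu^{n+1}-\bw^n_*)\cdot\nabla^{\bfeta^n_*}\bu^{n+1}\cdot\bq$ (and its skew-symmetric partner) by $C(\delta)\|\bu^{n+1}\|_{\bH^1(\sO)}\big(\|\bu^{n+1}\|_{\bL^2(\sO)}+\|\bw^n_*\|_{\bL^\infty}\big)\|\bq\|_{\bW^{1,\infty}}$ or, after an integration by parts, by $\|\bu^{n+1}\|_{\bL^4(\sO)}^2\|\bq\|_{\bW^{1,\infty}}$, using the uniform bounds \eqref{boundJ} on $\nabla A^\omega_{\bfeta^n_*}$, the boundedness of $\bw^n_*$ coming from Lemma~\ref{bounds}(1), and the $2$D Sobolev embedding $\bH^1(\sO)\hookrightarrow\bL^4(\sO)$; one then needs $\sum_n\Delta t\|\bu^{n+1}\|_{\bH^1(\sO)}^2$ in expectation, which is available from Theorem~\ref{energythm}. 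A secondary technical point is that the test space for the increment must be taken slightly smoother than $\sU$ (e.g. $\bW^{1,\infty}(\sO)\times\bH^2_0(0,L)$) to absorb the ALE coefficients, which forces $\sY_1$ slightly more negative than the naive dual of $\sU$; this is harmless since any negative Sobolev exponent works for the compactness argument, and for $\bv^+_N$ the choice $\sY_1=\bH^{-2}(0,L)$ already accommodates the elasticity operator $\sL_e$ and the penalty coupling terms. Finally, once the uniform moment bounds are in hand, Markov's inequality and Lemma~\ref{compactLp} give, for every $\kappa>0$, a compact set in $L^2(0,T;\sY)$ carrying mass $\ge1-\kappa$ uniformly in $N$, which is precisely tightness; Prokhorov then applies.
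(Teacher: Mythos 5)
Your overall strategy is legitimate but genuinely different from the paper's. The paper does not test the scheme with fixed test functions and does not measure the fluid increments in a negative Sobolev space: following the deterministic FSI tradition, it tests \eqref{first}$_3$ and \eqref{second} with transported, tangentially mollified and spectrally projected copies of the solution at the earlier time steps (the functions $\bu^{k,n}_{\sigma,M}$, $\bv^{k,n}_{\sigma,M}$, built with the Piola transform, a harmonic lifting of $\bv^k$ and Bogovski corrections), summed over the window $k=n-j+1,\dots,n$. The leading term then produces the translation seminorm directly in the strong norm $\bL^2(\sO)$ for $\bu$ and in $\bL^2(0,L)$ for $P_M\bv$, and, crucially, the transported test function satisfies \eqref{kton}, so the two penalty terms are bounded by $Ch^{1/2}$ uniformly in $\ep$. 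Your generic test functions lose exactly this: with $\text{div}^{\bfeta^n_*}\bq\neq0$ the divergence penalty only yields $C\ep^{-1/2}h^{1/2}$ from Lemma \ref{bounds}(5). For the present lemma, at fixed $\ep$, that is acceptable (the paper's own constant depends on $\ep$ through \eqref{ep2}), but the paper recycles this proof in Lemma \ref{tightep}, where $\ep$-uniformity of the penalty estimate is essential, and your route would not survive that step. What your route buys is the avoidance of the entire corrector machinery (the $\lambda$'s, $\xi_0$, Bogovski's operator, the spectral cutoff $M$ and the mollification scale $\sigma$).

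Two points need repair. First, the moment issue: Theorem \ref{energythm} controls only $\bE[\max_n E^n]$, i.e.\ second moments of the solution, so every term quadratic in $(\bu,\bv,\bfeta)$ (the advection term, the $(J^{n+1}_*-J^n_*)\bu^{n+1}$ term, and the correction needed to convert increments of $J^n_*\bu^n$ into increments of $\bu^n$, which costs $h^{1/4}\|\tilde\bfeta^*_N\|_{C^{0,1/4}(0,T;\bH^s(0,L))}\sup_n\|\bu^n\|_{\bL^2(\sO)}$) contributes a product of two random variables each of which is only in $L^2(\Omega)$. Your claimed bound on $\bE[\sup_h h^{-m}\|T_h\bu^+_N-\bu^+_N\|_{L^2(h,T;\mathcal{Y}_1)}]$ survives only because you keep the first power of the $L^2(h,T;\mathcal{Y}_1)$-norm and can close with Cauchy--Schwarz in $\Omega$; squaring the norm inside the expectation would require fourth moments that are not available. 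This is precisely why the paper converts everything into probability estimates of products via $\bP(XY>R)\le\bP(X>\sqrt{R})+\bP(Y>\sqrt{R})$ (``bypassing the need for higher moment estimates''); you should either do the same or spell out the first-power mechanism explicitly. Second, the choice of $\mathcal{Y}_1$ as the dual of $\bW^{1,\infty}(\sO)$ is self-defeating: the quantity whose increment the scheme controls is $J^n_*\bu^n$, and $(J^n_*)^{-1}$ only lies in $\bW^{1,p}(\sO)$, $p<4$, which is a bounded multiplier on $\bH^{\sigma}(\sO)$ only for $\sigma\le1$; for a very negative $\mathcal{Y}_1$ you cannot pass from increments of $J\bu$ to increments of $\bu$. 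Taking $\mathcal{Y}_1=\bH^{-1}(\sO)$ and estimating the advection term with $\bq\in\bH^1(\sO)\hookrightarrow\bL^4(\sO)$ suffices and resolves this. With these adjustments your argument goes through and Prokhorov's theorem applies as you describe.
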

\begin{proof} 
	The aim of this proof is to apply Lemma \ref{compactLp} by obtaining bounds for
	\begin{align}
	\notag	\int_h^{T} \|T_h\bu^+_N-\bu^+_N\|^2_{\bL^2(\sO)}&+\|T_h\bv^+_N-\bv^+_N\|^2_{\bH^{-\beta}(0,L)}\\
	&=	(\Delta t)\sum_{n=j}^N \|\bu^n-\bu^{n-j}\|^2_{\bL^2(\sO)} + \|\bv^n-\bv^{n-j}\|^2_{\bH^{-\beta}(0,L)},\label{shift}
	\end{align}
	for any $N$ and $h$. Here $1\leq j \leq N$ such that $h=j\Delta t-l$ for some  $l<\Delta t$. For simplicity we take $l=0$.
	
	To achieve this goal, we will construct appropriate test functions for equations \eqref{first} and \eqref{second} that will result into the term on the right side of \eqref{shift}. This is done by transforming $\bu^k$ and $\bv^k$ in a way that they can be used as test functions for the equations for $\bu^n$ and $\bv^n$. This has to be done carefully since $\bu^k$ and $\bu^n$ are defined on different physical domains. Observe also that
	we can not test \eqref{first}$_3$ directly with $\bv^k$, as it does not have the required $H^2_0$-bounds independent of $\ep$. Thus, we will use a space regularization of $\bv^k$ to arrive at the desired test function for \eqref{first} and \eqref{second}. 
	First, for any $\bg \in \bH^1(\sO)$, we extend $\bg$ by its boundary values at $\partial\sO$ constantly in the normal direction outside of $\sO$ and define it to be 0 elsewhere. Then, denoting the 1D bump function by $\rho$, we define its horizontally mollified version as follows:
\begin{align*}
	\bg_\sigma(z,r)=\int_{-1}^1 \bg(z+\sigma y,r)\rho(y)dy. 
\end{align*}
Now let $P_M$ denote the orthonormal projector in $L^2(\Gamma)$ onto the space $\text{span}_{1\leq i\leq M}\{\varphi_i\}$, where $\varphi_i$ satisfies $-\Delta \varphi_i=\gamma_i\varphi_i$ and $\varphi_i(z)=0$ when $z=0,L$. For any $\bv\in\bL^2(\Gamma)$ we notate $\bv_M=P_M\bv$ and denote by $\bw_M$ the harmonic extension of $\bv_M$ in $\sO$, such that $\bw_M=0$ on $\partial\sO\setminus\Gamma$ (cf. \eqref{eqn_w}). Similarly, let $\bw^k$ be the harmonic extension of $\bv^k$ in $\sO$ with 0 boundary values on $\partial\sO\setminus\Gamma$.
For the rest of the proof we fix,
$\sigma=\frac{L}{4}h^{\frac1{12}}$ and since $\gamma_M\sim \gamma_1 M^2$, we choose $M$ such that $\gamma_M^{}=c h^{-\frac1{6}}$.

Now we define the following function that will lead to a suitable test function for the fluid subproblem (see \eqref{Qn}):
\begin{align*}
&	\bu^{k,n}_{\sigma,M}:= (J^{n}_{*})^{-1}\nabla A_{\bfeta^n_*} \left(J^{k}_{*}(\nabla  A_{\bfeta^k_*})^{-1} (\bu^k-\bw^k)\right)_\sigma
	+\bw^k_M+\left( \frac{\lambda^{k}_\sigma-\lambda^{k,n}_M}{\lambda^n_0}\right) \xi_0\chi\\
	&	-\frac1{J^n_*}\sB\left( \text{div} \left( (J^{k}_{*}(\nabla  A_{\bfeta^k_*})^{-1} \bw^k)_\sigma-J^{n}_{*}(\nabla  A_{\bfeta^n_*})^{-1} \bw^k_M\right) 	-\left( \frac{\lambda^{k}_\sigma-\lambda^{k,n}_M}{\lambda^n_0}\right) \text{div}((J^n_*)^{-1}\nabla A_{\bfeta^n_*}\xi_0\chi)\right) ,
\end{align*}	
where 
\begin{itemize}
	\item $\chi(r)$ is a smooth function on $\sO$ such that $\chi(1)= 1$ and $\chi(0)= 0$ ,
	\item $\xi_0\in\bC^\infty_0(0,L)$ that satisfies $\lambda_0^{n}:=\int_0^L(\partial_z({\bf id}+ \bfeta^n_*) \times \xi_0) \neq 0$ for any $n$,	
	\item The constants $\lambda_\sigma^{k}=\int_0^L(\partial_z({\bf id}+ \bfeta^k_*) \times \bv^k)_\sigma $ and $\lambda^{k,n}_M=\int_0^L(\partial_z({\bf id}+ \bfeta^n_*) \times \bv_M^k) $, 
	\item Finally, $\sB:L^2(\sO) \to \bH^1_0(\sO)$ is Bogovski's operator (see \cite{Galdi}). $\sB$ along with the constants in the previous point are used to correct the divergence of the extra terms appearing in the definition of $\bu^{k,n}_{\sigma,M}$ due to the extension of structure velocities in the fluid domains.
\end{itemize}
Observe that to preserve 0 boundary conditions on $\partial\sO\setminus\Gamma$ we must also "squeeze" the mollified function by $\sim 1+\sigma$ as in \cite{TC23, T23}. However, we skip that for a cleaner presentation as it does not change the following estimates and argument.
Next let, $$\bv^{k,n}_{\sigma,M}:=\bv^k_M-\left( \frac{\lambda^{k}_\sigma-\lambda^{k,n}_M}{\lambda_0^n}\right) \xi_0.$$
Observe that we used Piola transformations to define the fluid test functions $\bu^{k,n}_{\sigma,M}$.
Thanks to \eqref{boundJ} and Theorem 1.7-1 in \cite{C88} we thus observe that,
\begin{align*}
&	{J^n_*}(\text{div}^{\bfeta^n_*}\bu^{k,n}_{\sigma,M})=
	\text{div}\left(J^{k}_{*}(\nabla  A_{\bfeta^k_*})^{-1} \bu^k\right)_\sigma=\left(\text{div}\left( J^{k}_{*}(\nabla  A_{\bfeta^k_*})^{-1} \bu^k\right) \right)_{\sigma}={(J^k_*\text{div}^{\bfeta^k_*}\bu^k)_{\sigma}}
\end{align*}
Hence,
\begin{equation}
	\begin{split}\label{kton}
&\|	\text{div}^{\bfeta^n_*}\bu^{k,n}_{\sigma,M}\|_{L^2(\sO)} \leq C(\delta)\|\text{div}^{\bfeta^k_*}\bu^k\|_{L^2(\sO)}, \\
&\|(\bu^{k,n}_{\sigma,M}-  \bv^{k,n}_{\sigma,M})\cdot\bn^n_*\|_{L^2(\Gamma)}\leq C(\delta)\|(\bu^{k}-  \bv^{k})\cdot\bn^k_*\|_{L^2(\Gamma)}.
\end{split}\end{equation}
Note that, to ensure that \eqref{kton}$_2$ holds, we mollified only in the tangential direction i.e. along $\Gamma_{\bfeta^k_*}$ in the definition of $\bu^{k,n}_{\sigma,M}$.

 Observe that for any $\beta<\frac12$, we have, 
\begin{align}
	\|\bv_M-\bv\|_{\bH^{-\beta}(0,L)} \leq \gamma_{M}^{-\frac{\beta}{2}}\|\bv\|_{\bL^2(0,L)},\quad 	\|\bv_\sigma-\bv\|_{\bH^{-\beta}(0,L)}  \leq \sigma^{\beta}\|\bv\|_{\bL^2(0,L)}.
\end{align}
Observe also that
	$\|T_h\bfeta^*_N-\bfeta^*_N\|_{L^\infty(0,T;\bH^s(0,L))}\leq \|\tilde\bfeta^*_N\|_{C^{0,\frac14}(0,T;\bH^s(0,L))} h^\frac14$ for any $s>\frac32$.
Thanks to these observations and \eqref{wreg}, for any $s\geq \frac32$ and $\beta<\frac12$, we calculate
\begin{align}
\notag	&|\lambda_{\sigma}^{k}-\lambda^{k,n}_M|\leq 
	\int_0^L\int_{-1}^1\left( \int^{z+\sigma y}_z|\partial_{zz}\bfeta^k_*(w)|dw\right) \,|\bv^k(z+\sigma y)| dydz\\
\notag	&+\|\bfeta^k_*\|_{\bH^s(0,L)}\|\bv^k_\sigma-\bv^k\|_{\bH^{-\frac12}(0,L)}+\|\bfeta^k_*-\bfeta^n_*\|_{\bH^s(0,L)}\|\bv^k\|_{\bL^2(0,L)} \\
\notag	&+\|\bfeta^n_*\|_{\bH^s(0,L)}\|\bv^k_M-\bv^k\|_{\bH^{-\frac12}(0,L)}\\
	&\leq C(\delta)\left( \|\bfeta^n_*\|_{\bH_0^2(0,L)}\sigma^{\frac12}+\sigma^{\beta}+{\gamma_M^{-\frac{\beta}2}}+h^{\frac14}\|\tilde\bfeta^*_N\|_{C^{0,\frac14}(0,T;\bH^s(0,L))}\right) \|\bv^k\|_{\bL^2(0,L)}.
\label{lambda}
\end{align}

		Observe that, due to the properties of mollification,
	$		\partial_{zz}\bw_\sigma=	(\partial_{zz}\bw)_\sigma $ and $		\partial_{rr}\bw_\sigma=	(\partial_{rr}\bw)_\sigma $. Hence $\bw^k_\sigma$ is harmonic with 0 boundary values on $\partial\sO\setminus \Gamma$ and such that $\bw^k_\sigma(z,1)=\bv^k_\sigma(z)$.
	
	Moreover, the operator $\sB$ can be continuously extended to a bounded operator from $W^{l,p}_0(\sO)$ to $\bW^{l+1,p}_0(\sO)$ for any $l>-2+\frac1p$ (see \cite{GHH06}, \cite{Galdi}). Combining these observations with the bounds \eqref{boundJ} and \eqref{wreg}, we obtain for $s>\frac32$, $\beta<\frac12$ and $n-j\leq k\leq n$ that
	\begin{align}
		\notag	\|\bu^{k,n}_{\sigma,M}-\bu^k&\|_{\bL^2(\sO)} \lesssim \|\nabla A_{\bfeta_*^n}-\nabla A_{\bfeta_*^k}\|_{\bL^\infty(\sO)}\|\bu^k\|_{\bL^2(\sO)} + \|\bu^k-\bu^k_{\sigma}\|_{\bL^2(\sO)}+\\ &\notag+\|\bw^k-\bw^k_{\sigma}\|_{\bL^2(\sO)} +  \|\bw^k-\bw^k_M\|_{\bL^2(\sO)} +|\lambda^{k}_{\sigma}-\lambda_M^{k,n}|\\
		&\notag	\lesssim \| {\bfeta_*^n}- {\bfeta_*^k}\|_{\bH^s(0,L)}\|\bu^k\|_{\bL^2(\sO)} + \sigma\|\bu^k\|_{\bH^1(\sO)}\\
		& \notag + \|\bv^k-\bv^k_\sigma\|_{\bH^{-1/2}(0,L)}+ \|\bv^k-\bv^k_M\|_{\bH^{-1/2}(0,L)}+ |\lambda^{k}_{\sigma}-\lambda^{k,n}_M|\\
		\notag	&\lesssim   h^{\frac14}\|\tilde\bfeta^*_N\|_{C^{0,\frac14}(0,T;\bH^s(0,L))}(\|\bu^k\|_{\bL^2(\sO)}+\|\bv^k\|_{\bL^2(0,L)} )+\sigma\|\bu^k\|_{\bH^1(\sO)}\\
		\label{u1}&+	(\sigma^{\beta}+\gamma_M^{-\frac{\beta}2}+h^{\frac14}) \|\bv^k\|_{\bL^2(0,L)}
	\end{align}
where the hidden constants depend only on $\delta$. Observe, due to \eqref{A2p}, that,
\begin{align*}
\notag	&\|\bu^{k,n}_{\sigma,M}\|_{\bH^1(\sO)} \leq \|A_{\bfeta^k_*}\|_{\bW^{2,3}(\sO)}(\|\bu^k\|_{\bH^1(\sO)} +\|\bv_\sigma^k\|_{\bH^{1}(0,L)}) + \|\bv_M^{k}\|_{\bH^1(0,L)} +|\lambda^{k}_{\sigma}-\lambda_M^{k,n}|\\
	& \leq C(\delta)(\|\bu^k\|_{\bH^1(\sO)} + {\sigma^{-1}}\|\bv^k\|_{\bL^2(\sO)}) + \gamma_M^{\frac{1}2}\|\bv^{k}\|_{\bL^2(0,L)}+  C(\delta)\|\bv^k\|_{\bL^2(0,L)}.
\end{align*} 

Therefore, we obtain for some $C>0$ that depends only on $T$ and $\delta$ that,
\begin{align}	
\notag	\bE[(\Delta t) \sup_{0\leq n\leq N}&\sum_{k=n-j+1}^n \|\bu^{k,n}_{\sigma,M}\|^2_{\bH^1(\sO)}]\\ \notag &\leq C(\delta, T)\bE\left( (\Delta t) \sum_{n=0}^N\|\bu^k\|^2_{\bH^1(\sO)} + ({\sigma^{-2}}+ \gamma_M^{{}})\sup_{0\leq n\leq N}\|\bv^{k}\|^2_{\bL^2(0,L)}\right)\\
\label{uh1}	& \leq C(\delta,T)(1+{\sigma^{-2}}+ \gamma_M^{}).
	\end{align}
Similarly (see also \eqref{lambda}),
\begin{align}\label{vh2}
	\|\bv^{k,n}_{\sigma,M}\|_{\bH^2(0,L)}
	\leq  \gamma_M^{}\|\bv^{k}\|_{\bL^2(0,L)} + C(\delta)\|\bv^k\|_{\bL^2(0,L)}.
\end{align}
Then, as in \cite{G08}, for any $n \leq N$ we "test" \eqref{first}$_3$ and \eqref{second}$_2$ with \begin{align}	\label{Qn}
	\bQ_n:=(\bq_n,\bfpsi_n)=\left( (\Delta t) \sum_{k=n-j+1}^n\bu^{k,n}_{\sigma,M},\,\, (\Delta t) \sum_{k=n-j+1}^n\bv_{\sigma,M}^{k,n}\right) .
\end{align}This gives us
\begin{small}
	\begin{align*}	 	
		&\int_{\sO}\left((J^{n+1}_*) \bu^{n+1}-(J^{n}_*)\bu^{n}\right) \left( \Delta t \sum_{k=n-j+1}^n\bu^{k,n}_{\sigma,M}\right)  
		+\int_0^L(\bv^{n+1}-\bv^{n} )\left( \Delta t \sum_{k=n-j+1}^n\bv_{\sigma,M}^{k,n}\right) \\
		&\frac{-1}2\int_\sO \left( J_*^{n+1}-J_*^n\right)  \bu^{n+1}\cdot \left( \Delta t \sum_{k=n-j+1}^n\bu^{k,n}_{\sigma,M}\right) +(\Delta t) b^{\bfeta_n^*}(\bu^{n+1},\bw^n_*,\left( \Delta t \sum_{k=n-j+1}^n\bu^{k,n}_{\sigma,M}\right) ) \\
		&+ \frac{(\Delta t)}{\alpha}\int_\Gamma S^n_* (\bu^{n+1}-\bv^{n+1})\left( (\Delta t) \sum_{k=n-j+1}^n\bu^{k,n}_{\sigma,M}-\bv_{\sigma,M}^{k,n}\right) \\
		& + \frac{(\Delta t)}{\ep}\int_\sO 
		\text{div}^{\bfeta^n_*}\bu^{n+1}\text{div}^{\bfeta^n_*}\left( \Delta t \sum_{k=n-j+1}^n\bu^{k,n}_{\sigma,M}\right) d\bx  
		\end{align*}\begin{align*}
		&+ \frac{(\Delta t)}{\ep}\int_\Gamma (\bu^{n+1}-\bv^{n+1})\cdot\bn^n_*\left( (\Delta t) \sum_{k=n-j+1}^n\bu^{k,n}_{\sigma,M}-\bv_{\sigma,M}^{k,n}\right)\cdot\bn^n_*   \\
		&	+2\nu(\Delta t)\int_{\sO}(J^n_*) \bD^{\eta_*^{n}}(\bu^{n+1})\cdot \bD^{\eta_*^{n}}\left( \Delta t \sum_{k=n-j+1}^n\bu^{k,n}_{\sigma,M}\right) \\
			&+(\Delta t)\left(  \sL_e(\bfeta^{n+\frac12}) , \left( \Delta t \sum_{k=n-j+1}^n\bv_{\sigma,M}^{k,n}\right)  +\ep\int_0^L\partial_{zz}\bv^{n+\frac12}\cdot\partial_{zz}\left( \Delta t \sum_{k=n-j+1}^n\bv^{k,n}_{\sigma,M}\right)\right) \\
		&= (\Delta t)\left( P^n_{{in}}\int_{0}^1q^n_z\Big|_{z=0}dr-P^n_{{out}}\int_{0}^1q^n_z\Big|_{z=1}dr\right)
		+(G(\bu^{n},\bfeta_*^n)\Delta_n W, \bQ_n).
	\end{align*}
	\end{small}
	Observe that summation by parts formula gives us for the first two terms,
	\begin{align*}
		&=(\Delta t)\sum_{n=1}^N\left( \int_{\sO}(J^{n}_*) \bu^{n}(\bu^{n,n}_{\sigma,M}-\bu_{\sigma,M}^{n-j,n})d\bx+\int_0^L \bv^n(\bv_{\sigma,M}^{n,n}-\bv_{\sigma,M}^{n-j,n})dz\right)\\
		&- \int_\sO (J^{N+1}_*)\bu^{N+1} \left( \Delta t \sum_{k=N-j+1}^N\bu^{k,N}_{\sigma,M}\right)-\int_0^L \bv^N \left( \Delta t \sum_{k=N-j+1}^N\bv_{\sigma,M}^{k,n}\right),
	\end{align*}
		{where the first term on the right side above can be written as,}
		\begin{align*}
		&(\Delta t)\sum_{n=0}^N\left(
		\int_{\sO}(J^{n}_*) \bu^{n}(\bu^{n}-\bu^{n-j})d\bx+ \int_{\sO}(J^{n}_*) \bu^{n}(\bu^{n,n}_{\sigma,M}-\bu^{n}-(\bu_{\sigma,M}^{n-j,n}-\bu^{n-j}))d\bx\right)\\
	&+	\int_0^L \bv^n\left( \bv_{M}^{n}-\left( \frac{\lambda^{n}_\sigma-\lambda^{n,n}_M}{\lambda_0^n}\right)\xi_0 -\bv_{M}^{n-j}+\left( \frac{\lambda^{n-j}_\sigma-\lambda^{n-j,n}_M}{\lambda_0^n}\right)\xi_0\right) dz\\
		&=(\Delta t)\sum_{n=0}^N\left(
		\frac12\int_{\sO}(J^{n}_*) (|\bu^{n}|^2-|\bu^{n-j}|^2+|\bu^{n}-\bu^{n-j}|^2)d\bx\right) \\
		&+(\Delta t)\sum_{n=0}^N \left( \int_{\sO}(J^{n}_*) \bu^{n}(\bu^{n,n}_{\sigma,M}-\bu^{n}-(\bu_{\sigma,M}^{n-j,n}-\bu^{n-j}))d\bx\right)\\
		&+\frac{(\Delta t)}2\sum_{n=0}^N\int_0^L |\bv_M^n|^2-|\bv_M^{n-j}|^2+|\bv_M^n-\bv_M^{n-j}|^2- \bv^n(\frac{\lambda^{n}_\sigma-\lambda^{n-j}_\sigma+\lambda^{n-j,n}_M-\lambda^{n,n}_M}{\lambda_0^n})\xi_0
	\end{align*}	 
	where we set $\bu^n=0$ and $\bv^n=0$ for $n<0$ and $n>N$.
	Observe that since
	\begin{align*}
		\|\bv^n-\bv^n_M-\bv^{n-j}-\bv^{n-j}_M\|_{\bH^{-\beta}(0,L)} \leq \gamma_M^{-\frac{\beta}2}(\|\bv^n\|_{\bL^2(0,L)}+\|\bv^{n-j}\|_{\bL^2(0,L)}),
	\end{align*}
	for any $0\leq \beta<\frac12$, the right hand side of the preceding equation will give us the desired terms \eqref{shift}. In what follows we will treat rest of the terms. First,
	\begin{align*}
		I_0:=	\Delta t\sum_{n=0}^N\int_0^L |\bv^n_M|^2-|\bv^{n-j}_M|^2 =\Delta t \sum_{n=N-j+1}^{N}\|\bv^n_M\|^2_{\bL^2(0,L)}\leq h\max_{0\leq n \leq N}\|\bv^n\|^2_{\bL^2(0,L)}.
	\end{align*} 
	Hence,
$
		\bP(|I_0|>R) \leq \frac{h}R \bE[\max_{0\leq n \leq N}\|\bv^n\|^2_{\bL^2(0,L)}] \leq \frac{Ch}{R}.$
		
Next, we recall that for two matrices $A$ and $B$, the derivative of the determinant $D(det)(A)B=det(A)tr(BA^{-1})$. Hence applying the mean value theorem to $det(\nabla A_{\bfeta^n_*})-det(\nabla A_{\bfeta^{n+j}_*})$, using \eqref{boundJ} and that $det(A) \leq (\max A_{ij})^2$, we obtain, for some $\beta\in[0,1]$, that (for more details see (73) in \cite{MC16}):
	\begin{equation}
		\begin{split}\label{boundJt}
			\|{J^{n+j}_*-J^{n}_*}\|_{L^\infty(\sO)}&= \|\text{det}(\nabla A^{n,\beta})\nabla^{n,\beta}\cdot(A_{\bfeta^{n+j}_*}-A_{\bfeta^{n}_*})\|_{L^\infty(\sO)}\\
			& \leq C(\delta)\|A_{\bfeta^{n+j}_*}-A_{\bfeta^{n}_*}\|_{\bC^1(\sO)},
		\end{split}
	\end{equation}
	where $\nabla^{n,\beta}=\nabla^{\bfeta^n_*} +\beta(\nabla^{\bfeta^{n+j}_*}-\nabla^{\bfeta^n_*})$ and $\nabla A^{n,\beta}=\nabla A_{\bfeta^n_*} +\beta(\nabla A_{\bfeta^{n+j}_*}-\nabla A_{\bfeta^n_*})$.\\
Using \eqref{boundJ} again, we find the following bounds for any $s>\frac32$,
	\begin{align*}
		I_1&:=	(\Delta t)\sum_{n=0}^N\left(
		\int_{\sO}(J^{n}_*) (|\bu^{n}|^2-|\bu^{n-j}|^2)d\bx\right)\\
		&=	(\Delta t)\left( \sum_{n=N-j+1}^N\int_{\sO}(J^{n}_*)|\bu^n|^2 d\bx	+\sum_{n=0}^{N-j}\int_{\sO}(J^{n}_*-J^{n+j}_*) |\bu^{n}|^2d\bx\right) \\
		&\leq	(\Delta t) \sum_{n=N-j+1}^N\int_{\sO}(J^{n}_*)|\bu^n|^2 d\bx	+	(\Delta t)\sum_{n=0}^{N-j}\| A_{\bfeta^{n+j}_*}- A_{\bfeta^n_*}\|_{\bC^1(\sO)} \|\bu^{n}\|^2_{\bL^2(\sO)} \\
		&\leq   \left( h\sup_{1\leq n \leq N}\int_{\sO}(J^{n}_*)|\bu^n|^2 d\bx+ \sup_{0\leq k \leq N-1}\|\bu^{n}\|^2_{\bL^2(\sO)}h^{\frac14}\| \bfeta^{n+j}_*-\bfeta^n_*\|_{\bH^s(0,L)} \right)\\
		&\leq  h^{\frac14}  \sup_{0\leq k \leq N-1}\|\bu^{n}\|^2_{\bL^2(\sO)}\left(1+\|\tilde \bfeta^*_N\|_{C^{0,\frac14}(0,T;\bH^s(0,L))} \right).		
	\end{align*}
	In what follows, we will repeatedly make use of the fact that for any two positive random variables $X$ and $Y$, $\{\omega: X+Y>R\} \subseteq \{\omega: X>\frac{R}2\}\cup \{\omega: Y>\frac{R}2\}$ which further implies that
	\begin{align*}
		\bP(|X|+|Y|>R) \leq \bP(|X|>\frac{R}2)+\bP(|Y|>\frac{R}2),
		\shortintertext{and similarly,}
		\bP(|XY|>R) \leq \bP(|X|>\sqrt{R})+\bP(|Y|>\sqrt{R}).
	\end{align*}
The embedding $W^{1,\infty}(0,T;L^2(0,L))\cap L^2(0,T;H^2_0(0,L)) \hookrightarrow C^{0,\frac14}(0,T;H^\frac32(0,L))$, then gives us for any $s\geq \frac32$ that
	\begin{align*}
		\bP&\left(|I_1|>R \right) \leq \bP\left( h^{\frac18}\sup_{1\leq n \leq N}\|\bu^n\|^2_{\bL^2(\sO)} \geq \sqrt{R}\right) + \bP\left( h^{\frac18}\|\tilde\bfeta^*_N\|_{C^{0,\frac14}(0,T;\bH^s(0,L))}\geq \sqrt{R} \right)\\
		&\leq \frac{{h}^{\frac18}}{\sqrt{R}}\bE\left( \sup_{1\leq n \leq N}\|\bu^n\|^2_{\bL^2(\sO)}  +\|\tilde\bfeta^*_N\|_{C^{0,\frac14}(0,T;\bH^s(0,L))} \right)\\
		&\leq \frac{{h}^{\frac18}}{\sqrt{R}}\bE\left( \sup_{1\leq n \leq N}\|\bu^n\|^2_{\bL^2(\sO)}  +\|\tilde\bfeta^*_N\|_{W^{1,\infty}(0,T;\bL^2(0,L))\cap L^2(0,T;\bH^2_0(0,L)) } \right) \leq C\frac{{h}^{\frac18}}{\sqrt{R}}.
	\end{align*}
	Next, thanks to \eqref{u1}, we see that
	\begin{align*}
		& I_2=	|(\Delta t)\sum_{n=0}^N \left( \int_{\sO}(J^{n}_*) \bu^{n}(\bu^{n,n}_{\sigma,M}-\bu^{n}-(\bu_{\sigma,M}^{n-j,n}-\bu^{n-j}))d\bx\right)|\\
		& \leq C(\delta)(\Delta t)\sum_{n=0}^N\|\bu^n\|_{\bL^2(\sO)}(\|\bu^{n,n}_{\sigma,M}-\bu^n\|_{\bL^2(\sO)}+\|\bu^{n-j,n}_{\sigma,M}-\bu^{n-j}\|_{\bL^2(\sO)})\\
	&\lesssim  \sup_{1\leq n\leq N} \|\bu^n\|_{\bL^2(\sO)}\sum_{k=0}^N \Big(     h^{\frac14}\|\tilde\bfeta^*_N\|_{C^{0,\frac14}(0,T;\bH^s(0,L))}(\|\bu^k\|_{\bL^2(\sO)}+\|\bv^k\|_{\bL^2(0,L)} )\\
	&+\sigma\|\bu^k\|_{\bH^1(\sO)}
	 +	(\sigma^{\beta}+\gamma_M^{-\frac{\beta}2}+h^{\frac14}) \|\bv^k\|_{\bL^2(0,L)}\Big).
	\end{align*}
Similarly, thanks to \eqref{lambda}, we see for any $0\leq \beta<\frac12$ that
\begin{align*}
I_3&:=|(\Delta t)\sum_{n=0}^N	\int_0^L \bv^n(\frac{\lambda^{n,n}_\sigma-\lambda^{n-j,n}_\sigma+\lambda^{n-j,n}_M-\lambda^{n,n}_M}{\lambda_0^n})\xi_0|\\
&\leq C(\delta)\left( \|\bfeta^n_*\|_{\bH_0^2(0,L)}\sigma^{\frac12}+\sigma^{\beta}+{\gamma_M^{-\frac{\beta}2}}+h^{\frac14}\|\tilde\bfeta^*_N\|_{C^{0,\frac14}(0,T;\bH^s(0,L))}\right) \|\bv^n\|_{\bL^2(0,L)}.
\end{align*}
	Hence, we obtain that,
$$\bP(|I_2+I_3|>R)	\leq \frac{C}R(h^{\frac14}+\sigma^{\beta}+\gamma_M^{-\frac{\beta}2}) \leq  \frac{C}{R}h^{\frac{\beta}{12}}.$$
	For the penalty term, thanks to \eqref{kton}$_1$, we have,
\begin{align*}
	& \bE[I_4]:=\bE|	\frac{(\Delta t)}{\ep}\sum_{n=0}^{N}\int_\sO 
	\text{div}^{\bfeta^n_*}\bu^{n+1}\left( \Delta t \sum_{k=n-j+1}^n\text{div}^{\bfeta^n_*}(\bu^{k,n}_{\sigma,\lambda})\right) d\bx |  \\
	&\leq \bE\frac{(\Delta t)}{{\ep}}\sum_{n=0}^{N}
	\|	\text{div}^{\eta^n_*}\bu^{n+1}\|_{L^2(\sO)}\left( 	{(\Delta t)}{{}} \sum_{k=n-j+1}^n\|\text{div}^{\bfeta^n_*}(\bu^{k,n}_{\sigma,\lambda})\|^2_{L^2(\sO)}\right)^{\frac12} \sqrt{h} \end{align*}\begin{align*}
	&\leq C \sqrt{hT} \bE \left(  	\frac{(\Delta t)}{\ep} \sum_{n=0}^N\|\text{div}^{\bfeta^{n}_*}(\bu^{n+1})\|^2_{L^2(\sO)}\right) \leq Ch^{\frac12} .
\end{align*}
	Notice that, due to Lemma \ref{bounds} (5), the constant $C$ in the estimate above does not depend on $\ep$. The other penalty term is handled identically thanks to \eqref{kton}$_2$ and Lemma \ref{bounds} (6). 
	Similarly, due to \eqref{uh1}, we have
	\begin{align*}
		\bE[I_5] =&\bE|	{(\Delta t)}{}\sum_{n=0}^{N}\int_\sO 
		\bD^{\eta^n_*}\bu^{n+1}\left( \Delta t \sum_{k=n-j+1}^n\bD^{\eta^n_*}(\bu^{k,n}_{\sigma,M})\right) d\bx |  \\
		&\leq \sqrt{h}\bE((\Delta t)\sum_{n=0}^{N}
		\|\bu^{n+1}\|_{\bH^1(\sO)}\left( {(\Delta t)} \sum_{k=n-j+1}^n\|(\bu^{k,n}_{\sigma,M})\|^2_{\bH^1(\sO)}\right)^{\frac12}) \\
		&\leq \sqrt{h}\bE\sum_{n=0}^{N}(\Delta t)
		\|\bu^{n+1}\|^2_{\bH^1(\sO)}+\sqrt{h}\bE\left( {(\Delta t)}\sup_{0\leq n\leq N} \sum_{k=0}^N\|(\bu^{k,n}_{\sigma,M})\|^2_{\bH^1(\sO)}\right)^{} \\		
	& \leq  C(\delta)h^{\frac14}(1+{\sigma^{-2}}+ \gamma_M^{}) \leq Ch^{\frac1{12}}.
	\end{align*}	
Next, using the embedding $H^{\frac12}(\sO) \hookrightarrow L^4(\sO)$ we obtain, for some $C>0$ which depends only on $\delta$, that
\begin{align*}
	&I_6:=|	(\Delta t)\sum_{n=0}^N b^{\bfeta^n_*}\left( \bu^{n+1},\bw_*^n,\left( \Delta t \sum_{k=n-j+1}^n\bu^{k,n}_{\sigma,M}\right) \right) |\\
	&\leq\sqrt{h}	(\Delta t)\sum_{n=0}^N  \|\bu^{n+1}-\bw_*^{n}\|_{\bL^4(\sO)} \|\bu^{n+1}\|_{\bL^4(\sO)} \left( \Delta t \sum_{k=n-j+1}^n\|\bu^{k,n}_{\sigma,M}\|^2_{\bH^1(\sO)}\right)^{\frac12}\\
	& \leq\sqrt{h}	(\Delta t)\sum_{n=0}^N  \left(\|\bu^{n+1}\|_{\bH^1(\sO)}+\|\bw_*^{n}\|_{\bH^\frac12(0,L)}\right) \|\bu^{n+1}\|_{\bH^1(\sO)}\\
	&\qquad\qquad  \left( \Delta t \sum_{k=n-j+1}^n
	\|\bu^{k,n}_{\sigma,M}\|^2_{\bH^1(\sO)}\right)^{\frac12}.
\end{align*}
Again, thanks to \eqref{uh1}, for any $R>0$, we have
\begin{align*}
		\bP\left( |	I_6 | > R\right) &\leq \frac{h^{\frac14}}{\sqrt{R}}\bE[(\Delta t)\sum_{n=0}^N  \|\bu^{n+1}\|^2_{\bH^1(\sO)}] +\frac{h^{\frac14}}{\sqrt{R}}\bE[\sup_{0\leq n\leq N-1}\|\bv^{n+\frac12}\|^2_{\bL^2(0,L)}] \\
		&+  \frac{h^{\frac12}}{R} \bE[(\Delta t)^2\sum_{n=0}^N\sum_{k=n-j+1}^n
			\|\bu^{k,n}_{\sigma,M}\|^2_{\bH^1(\sO)}] \\
			&\leq 		\frac{C(\delta)h^{\frac14}}{\sqrt{R}}+\frac{C(\delta)h^{\frac12}}{{R}}(1+{\sigma^{-2}}+ \gamma_M^{})\leq\frac{C(\delta)}{\sqrt{R}}h^{\frac1{4}} .
\end{align*}
Next, calculations similar to \eqref{boundJt}, give us
\begin{align}\label{boundJt2}
	\|\frac{J^{n+1}_*-J^{n}_*}{\Delta t}\|_{L^2(\sO)}\leq C(\delta)\|\bw^{n+1}_*\|_{\bH^1(\sO)}\leq C(\delta)\|\bv_*^{n+1}\|_{\bH^{\frac12}(0,L)}.
\end{align}
Hence,
\begin{align*}
&	I_7:=	|\sum_{n=0}^N\int_\sO \left( J_*^{n+1}-J_*^n\right)  \bu^{n+1}\cdot \left( \Delta t \sum_{k=n-j+1}^n\bu^{k,n}_{\sigma,M}\right) d\bx | \\
	&\leq (\Delta t)\sum_{n=0}^N\|\bv^{n+\frac12}\|_{\bH^\frac12(0,L)}\|\bu^{n+1}\|_{\bL^6(\sO)}\left( (\Delta t)\sum_{k=n-j+1}^n    \|  \bu^{k,n}_{\sigma,M} \|^2_{\bL^3(\sO)}\right) ^{\frac12}\sqrt{h}\\
	&\leq C\sqrt{h} \left((\Delta t) \sum_{n=0}^N\|\bv^{n+\frac12}\|^2_{\bH^\frac12(0,L)} \right) ^{\frac12} \left( (\Delta t)\sum_{n=0}^N    \|  \bu^{n} \|^2_{\bH^1(\sO)}\right)^{\frac12}\\
	&\qquad\left( (\Delta t)\sup_{0\leq n\leq N}\sum_{k=n-j+1}^n    \|  \bu^{k,n}_{\sigma,M} \|^2_{\bH^1(\sO)}\right) ^{\frac12}.
\end{align*}
This implies, thanks to \eqref{uh1}, that,
	\begin{align}
	\bP\left( |	I_7 | \geq R^{}\right) 
\label{ep2}	&\leq  \frac{C}{\ep}\frac{h^{\frac13}}{R^{\frac23}} +\frac{C(\delta)h^{\frac13}}{R^{\frac23}}(1+{\sigma^{-2}}+ \gamma_M) \leq{C(\ep)}\frac{h^{\frac1{6}}}{R^{\frac23}}.	
\end{align}
Note that this estimate depends on $\ep$.	Next, using \eqref{vh2}, 
	we obtain that,
	\begin{align*}
		&\bE[I_8]:=	\bE|(\Delta t)\sum_{n=0}^N	\left(   \sL_e(\bfeta^{n+1} ), \left( \Delta t \sum_{k=n-j+1}^n\bv^{k}_{\sigma,M}\right)\right) |\\
		&\qquad\leq C(\delta) (\Delta t)^2 \bE\sum_{n=0}^N\left( \|\partial_{zz}\bfeta^n \|_{\bL^2(0,L)} \sum_{k=n-j-1}^n\|\partial_{zz}\bv^{k,n}_{\sigma,M}\|_{\bL^2(0,L)} \right)  \\
		&\qquad\leq \bE(\Delta t)^2 \sum_{n=0}^N\left( \|\partial_{zz}\bfeta^n \|_{\bL^2(0,L)} \sum_{k=n-j-1}^n \gamma_M^{}\|\bv^{k}\|_{\bL^2(0,L)}\right)  \\
		&\qquad\leq {CT}{h^{}}\gamma_M\bE\max_{0\leq n\leq N}\left( \|\bfeta^n \|^2_{\bH^2(0,L)}+ \|\bv^n\|^2_{\bL^2(0,L)} \right) \leq Ch^{\frac56}.
	\end{align*}
Finally, we treat the stochastic term (see also \eqref{tower}) using Young's inequality:
	\begin{align*}
		&\bE\left( \sum_{n=0}^N	|(G(\bu^{n},\bfeta_*^n)\Delta_n W, \bQ_n)|\right)  
			\leq 		\bE\Big( \sum_{n=0}^N	\|G(\bu^{n},\bfeta_*^n)\|_{L_2(U_0;\bL^2)}\|\Delta_n W\|_{U_0} \\
			&\qquad\times \left( (\Delta t) \sum_{k=n-j+1}^n\|\bu^{k,n}_{\sigma,M}\|_{\bL^2(\sO)}^2 +\|\bv_{\sigma,M}^k\|_{\bL^2(0,L)}^2\right)^{\frac12}h^{\frac12}\Big) 	\\
		& \qquad	\leq 	h^{\frac12}	\bE \sum_{n=0}^N	\|G(\bu^{n},\bfeta_*^n)\|^2_{L_2(U_0;\bL^2)}\|\Delta_n W\|^2_{U_0}  \\
		&\qquad+\bE\left( (\Delta t) \sum_{k=n-j+1}^n\|\bu^{k,n}_{\sigma,M}\|_{\bL^2(\sO)}^2 +\|\bv_{\sigma,M}^k\|_{\bL^2(0,L)}^2\right) 
		 \leq Ch^{\frac12}.
	\end{align*}
	Now to show that the laws of the random variables mentioned in the statement of the theorem are tight, we will consider the following sets for $0\leq \alpha<1$ and $0<\beta<\frac12$ and any $R>0$,
	\begin{align*}
		&{\mathcal{B}}_R:=\{(\bu,\bv)\in L^2(0,T;\bH^1(\sO))\times L^2(0,T;\bL^{2}(0,L)):\|{\bu}\|^2_{L^2(0,T;\bH^1(\sO))}+\\
		&\|{\bv}\|^2_{L^2(0,T;\bL^{2}(0,L))}+\sup_{0<h<1}{h^{-\frac{\beta}{12}}}\int_h^{T}\left( \|T_h\bu-\bu\|^2_{\bL^2(\sO)}+\|T_h\bv-\bv\|^2_{\bH^{-\beta}(0,L)}\right)  \le R\}.
	\end{align*}
Thanks to Lemma \ref{compactLp}, $\sB_R$ is compact in $L^2(0,T;\bH^{\alpha}(\sO))\times L^2(0,T;\bH^{-\beta}(0,L))$ for each $R>0$, $0\le\alpha<1$ and $0<\beta<\frac12$.
	Now we apply Chebyshev's inequality to obtain the desired result:
	\begin{align*}
		\bP((\bu^+_N,&\bv^+_N) \notin \sB_R)\leq \bP\left( \|{\bu^+_N}\|^2_{L^2(0,T;\bH^1(\sO))}+\|{\bv^+_N}\|^2_{L^2(0,T;\bL^{2}(0,L))}>\frac{R}2\right) \\
		&+\bP\left( \sup_{0 <h<1 }{h^{-\frac{\beta}{12}}}\int_h^{T}\left( \|T_h\bu^+_N-\bu^+_N\|^2_{\bL^2(\sO)}+\|T_h\bv^+_N-\bv^+_N\|^2_{\bH^{-\beta}(0,L)}\right)  > \frac{R}2\right) \\
		&\leq \frac{C}{\sqrt{R}},
	\end{align*}
	where $C>0$ depends only on $\delta$, Tr$Q$, $\ep$ and the given data and is independent of $N$.
\end{proof}
Next we give the tightness results for the structure displacements and velocities.
\begin{lem}\label{tightl2}
	For fixed $\delta$, the following statements hold:
	\begin{enumerate}
		\item The laws of $\{\tilde\bfeta_{N}\}_{N\in \mathbb{N}}$ and that of $\{\tilde\bfeta^*_{N}\}_{N\in \mathbb{N}}$ are tight in $C([0,T],\bH^{s}(0,L))$ for any $s<2$.
			\item The laws of $\{\|\bu^+_N\|_{L^2(0,T;V)}\}_{N\in \mathbb{N}}$ are tight in $\R$.
		\item For a fixed $\ep$, the laws of $\{\|\bv^*_N\|_{L^2(0,T;\bH_0^1(0,L))}\}_{N\in \mathbb{N}}$ are tight in $\R$.
	\end{enumerate}
\end{lem}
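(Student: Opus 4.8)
The plan is to obtain all three statements from the uniform bounds already collected in Lemma~\ref{bounds} (which themselves rest on Theorem~\ref{energythm}), combined with Chebyshev's inequality. Parts~(2) and~(3) will then be essentially immediate, and the only place that requires genuine work is the compactness criterion underlying part~(1), which amounts to an Arzel\`a--Ascoli argument.

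For part~(1) I would first record, from Lemma~\ref{bounds}(1)--(2) and the identities $\partial_t\tilde\bfeta_N=\bv^{\#}_N$, $\partial_t\tilde\bfeta^*_N=\bv^*_N$ of \eqref{etaderi}, that both $\{\tilde\bfeta_N\}$ and $\{\tilde\bfeta^*_N\}$ are bounded, uniformly in $N$ and $\ep$, in $L^2(\Omega;L^\infty(0,T;\bH^2_0(0,L))\cap W^{1,\infty}(0,T;\bL^2(0,L)))$. Fixing $s<2$, I would then introduce, for $R>0$,
$$K_R:=\Big\{\bfeta\in C([0,T];\bH^s(0,L)):\ \|\bfeta\|_{L^\infty(0,T;\bH^2_0(0,L))}\le R,\ \ \|\bfeta(t)-\bfeta(\tau)\|_{\bL^2(0,L)}\le R|t-\tau|\ \ \forall\,t,\tau\in[0,T]\Big\},$$
and prove that $K_R$ is compact in $C([0,T];\bH^s(0,L))$: equicontinuity of $K_R$ as a family of $\bH^s$-valued maps follows from the Lipschitz-in-$\bL^2$ bound together with the interpolation inequality $\|g\|_{\bH^s}\lesssim\|g\|_{\bL^2}^{1-s/2}\|g\|_{\bH^2}^{s/2}$, which yields $\|\bfeta(t)-\bfeta(\tau)\|_{\bH^s}\lesssim(R|t-\tau|)^{1-s/2}R^{s/2}$; and for each fixed $t$ the set $\{\bfeta(t):\bfeta\in K_R\}$ is bounded in $\bH^2_0(0,L)$, hence precompact in $\bH^s(0,L)$ since the embedding $\bH^2_0(0,L)\hookrightarrow\bH^s(0,L)$ is compact for $s<2$ on the bounded interval $(0,L)$. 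Arzel\`a--Ascoli for maps into the metric space $\bH^s(0,L)$ gives precompactness, and $K_R$ is closed, hence compact. Since $\bP$ is a probability measure, the $L^2(\Omega)$-bounds above give an $L^1(\Omega)$-bound, uniform in $N$ and $\ep$, on $\|\tilde\bfeta_N\|_{L^\infty(0,T;\bH^2_0)}$ plus the Lipschitz seminorm of $\tilde\bfeta_N$ into $\bL^2(0,L)$, so Chebyshev's inequality gives $\bP(\tilde\bfeta_N\notin K_R)\le C/R\to0$ as $R\to\infty$, uniformly in $N$; the same bound holds for $\tilde\bfeta^*_N$, which proves tightness.

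Parts~(2) and~(3) follow from the same principle with trivial compact sets. By Lemma~\ref{bounds}(4) the real random variables $\|\bu^+_N\|_{L^2(0,T;V)}$ have uniformly bounded second moments; and by Lemma~\ref{bounds}(7), using the embedding $\bH^2_0(0,L)\hookrightarrow\bH^1_0(0,L)$ and the fact that the cut-off $\theta_\delta$ takes values in $\{0,1\}$ (so that $\|\bv^*_N\|_{L^2(0,T;\bH^1_0(0,L))}\le\|\bv^{\#}_N\|_{L^2(0,T;\bH^2_0(0,L))}$), the variables $\|\bv^*_N\|_{L^2(0,T;\bH^1_0(0,L))}$ have second moments bounded by $C/\ep$ for each fixed $\ep$. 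In both cases one has a family of real random variables with uniformly bounded second moments, so for each $\epsilon>0$ Chebyshev's inequality produces an $R$ (depending on $\epsilon$, resp. on $\epsilon$ and $\ep$) with $\bP(\,\cdot\,>R)<\epsilon$ for all $N$, and the compact interval $[0,R]\subset\R$ witnesses tightness.

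I expect the only mildly delicate point to be the compactness of $K_R$ in $C([0,T];\bH^s(0,L))$, i.e.\ combining the weak temporal regularity (merely Lipschitz into $\bL^2$) with the compact spatial embedding $\bH^2_0\hookrightarrow\bH^s$ via interpolation; this is exactly where the restriction $s<2$ is genuinely used. Everything else is bookkeeping on top of the already-established uniform energy estimates.
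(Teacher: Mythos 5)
Your proof is correct and follows essentially the same route as the paper: uniform second-moment bounds from Lemma \ref{bounds} combined with Chebyshev's inequality, with the compact sets for part (1) being bounded sets of $L^\infty(0,T;\bH^2_0(0,L))\cap W^{1,\infty}(0,T;\bL^2(0,L))$. The only difference is presentational: you verify the compactness of these sets in $C([0,T];\bH^s(0,L))$ by hand via Arzel\`a--Ascoli and interpolation, whereas the paper simply invokes the Aubin--Lions compact embedding.
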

\begin{proof}
The Aubin-Lions theorem gives us: For $0<s<2$,
	$$L^\infty(0,T;\bH_0^2(0,L))\cap W^{1,\infty}(0,T;\bL^2(0,L)) \subset\subset C([0,T];\bH^{s}(0,L)). 
	$$
	Hence for any $R>0$ we consider
	\begin{align*}\mathcal{K}_R:=&\{{\bfeta}\in L^\infty(0,T;\bH^2_0(0,L))\cap W^{1,\infty}(0,T;\bL^2(0,L)):\\
		&\|{\bfeta}\|^2_{L^\infty(0,T;\bH^2_0(0,L))} + \|{\bfeta}\|^2_{ W^{1,\infty}(0,T;\bL^2(0,L))}\leq R\}.\end{align*}
	Using the Chebyshev inequality and Lemma \ref{bounds} we obtain for some $C>0$ independent of $N$ and $\ep$ that the following holds:
	\begin{equation*}
		\begin{aligned}
			\bP\left[\tilde{\bfeta}_{N}\not\in {\mathcal{K}}_R\right]&\le \bP\left[
			{\|\tilde{\bfeta}_{N}\|}^2_{L^\infty(0,T;\bH^2_0(0,L))}\ge \frac{R}{2}\right]+
			\bP\left[
			{\|\tilde{\bfeta}_{N}\|}^2_{W^{1,\infty}(0,T;\bL^2(0,L))}\ge \frac{R}{2}\right]	\\
			&\le  \frac{4}{R^2}\bE\left[
			{\|\tilde{\bfeta}_{N}\|}^2_{L^\infty(0,T;\bH^2_0(0,L))}+\|\tilde{\bfeta}_{N}\|^2_
			{W^{1,\infty}(0,T;\bL^2(0,L))}\right]
			\leq \frac{C}{R^2}.
		\end{aligned}
	\end{equation*}
	The proof of the statements (2), (3) follow by an identical application of the Chebyshev inequality and the bounds obtained in Lemma \ref{bounds}. For any $R>0$,
	\begin{align*}
		\bP[\| \bv^*_N\|_{L^2(0,T;\bH_0^1(0,L))}>R] \leq \frac1{R^2}\bE[\| \bv^*_N\|^2_{L^2(0,T;\bH_0^1(0,L))}] \leq \frac{C(\ep)}{R^2}.
	\end{align*}
	This completes the proof of Lemma \ref{tightl2}.
\end{proof}
	To obtain almost sure convergence of the rest of the random variables we will use the following lemma which is a consequence of the bounds on numerical dissipation.
	\begin{lem}\label{difference}
	The following convergence results hold: 
		\begin{enumerate}
			\item $\lim_{N\rightarrow\infty} \bE\int_0^T\|\bu_{N}-\bu^+_{N}\|^2_{\bL^2(\sO)}dt=0$,
			\, $\lim_{N\rightarrow\infty} \bE\int_0^T\|\bu_{N}-\tilde\bu_{N}\|^2_{\bL^2(\sO)}dt=0$.
			\item  $\lim_{N\rightarrow\infty} \bE\int_0^T\|\bv_{N}-\tilde \bv_{N}\|^2_{\bL^2(0,L)}dt=0$,
			 $\lim_{N\rightarrow\infty} \bE\int_0^T\|\bv_{N}-\bv^{\#}_{N}\|^2_{\bL^2(0,L)}dt=0$.
			\item $\lim_{N\rightarrow\infty} \bE\int_0^T\|{\bfeta}_{N}-\tilde{\bfeta}_{N}\|^2_{\bH_0^2(0,L)}dt=0$,
			 $\lim_{N\rightarrow\infty} \bE\int_0^T\|{\bfeta}^+_{N}-\tilde{\bfeta}_{N}\|^2_{\bH_0^2(0,L)}dt=0$.
		\end{enumerate}
	\end{lem}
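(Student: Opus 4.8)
The plan is to obtain all six limits directly from the numerical-dissipation bounds recorded in Theorem \ref{energythm}, items (3) and (4), combined with the deterministic lower bound $J^n_*\geq\delta_1$ that holds for every $n$ and $\omega$ by the construction of the artificial displacement $\bfeta^n_*$ together with the choice of $\delta_2$ in \eqref{delta}. The mechanism is uniform across the six statements: in each difference the two processes are built (piecewise constantly or piecewise affinely) from the \emph{same} nodal values, so that on a subinterval $[t^n,t^{n+1}]$ the difference is a scalar multiple of a single increment $\bu^{n+1}-\bu^n$, $\bv^{n+1}-\bv^n$, $\bv^{n+\frac12}-\bv^n$, or $\bfeta^{n+1}-\bfeta^n$. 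Integrating in time turns $\bE\int_0^T\|\cdot\|^2\,dt$ into $\Delta t$ times a sum of expected squared increments, which is precisely the quantity Theorem \ref{energythm} controls by a constant independent of $N$ and $\ep$; since $\Delta t=T/N$, letting $N\to\infty$ closes each case.

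Concretely, for (1) one uses that on $(t^n,t^{n+1})$ we have $\bu_N-\bu^+_N=\bu^n-\bu^{n+1}$ and $\bu_N-\tilde\bu_N=\tfrac{t-t^n}{\Delta t}(\bu^{n}-\bu^{n+1})$, so that
\begin{align*}
\bE\int_0^T\Big(\|\bu_N-\bu^+_N\|^2_{\bL^2(\sO)}+\|\bu_N-\tilde\bu_N\|^2_{\bL^2(\sO)}\Big)\,dt
&\;\le\; C\,\Delta t\;\bE\sum_{n=0}^{N-1}\|\bu^{n+1}-\bu^n\|^2_{\bL^2(\sO)}\\
&\;\le\; \frac{C\,\Delta t}{\delta_1}\;\bE\sum_{n=0}^{N-1}\int_{\sO}J^n_*\,|\bu^{n+1}-\bu^n|^2\,d\bx,
\end{align*}
which tends to $0$ by Theorem \ref{energythm}(3). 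Statement (3) is handled identically: on $(t^n,t^{n+1})$ one has ${\bfeta}_N-\tilde{\bfeta}_N=\tfrac{t-t^n}{\Delta t}({\bfeta}^{n}-{\bfeta}^{n+1})$ and ${\bfeta}^+_N-\tilde{\bfeta}_N=\tfrac{t^{n+1}-t}{\Delta t}({\bfeta}^{n+1}-{\bfeta}^{n})$, so both $\bL^2(0,T;\bH^2_0(0,L))$-norms are bounded by $C\,\Delta t\,\bE\sum_{n}\|{\bfeta}^{n+1}-{\bfeta}^n\|^2_{\bH^2_0(0,L)}$, which vanishes by Theorem \ref{energythm}(4) (using that $\|\cdot\|_{\bH^2(0,L)}$ and $\|\cdot\|_{\bH^2_0(0,L)}$ coincide on $\bH^2_0$).

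For (2), $\bv_N-\bv^{\#}_N=\bv^n-\bv^{n+\frac12}$ on $(t^n,t^{n+1})$ gives $\bE\int_0^T\|\bv_N-\bv^{\#}_N\|^2_{\bL^2(0,L)}\,dt=\Delta t\,\bE\sum_n\|\bv^{n+\frac12}-\bv^n\|^2_{\bL^2(0,L)}\to 0$ by Theorem \ref{energythm}(4); and since $\bv_N-\tilde\bv_N=\tfrac{t-t^n}{\Delta t}(\bv^{n}-\bv^{n+1})$, one splits $\bv^{n+1}-\bv^n=(\bv^{n+1}-\bv^{n+\frac12})+(\bv^{n+\frac12}-\bv^n)$ and bounds the two resulting sums using Theorem \ref{energythm}(3) and (4) respectively, again obtaining $O(\Delta t)$. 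There is essentially no obstacle here: the lemma is a bookkeeping consequence of the two numerical-dissipation estimates. The only points worth a sentence of justification are the passage from the $J^n_*$-weighted $\bL^2(\sO)$-norm to the unweighted one (which uses $J^n_*\ge\delta_1$), and the elementary observation that a piecewise-affine interpolant minus the corresponding piecewise-constant process equals $\tfrac{t-t^n}{\Delta t}$ (or $\tfrac{t^{n+1}-t}{\Delta t}$) times the nodal increment, so the time integration is trivial; in particular all bounds are uniform in $\ep$ and thus persist through the later limit $\ep\to0$.
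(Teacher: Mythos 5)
Your proof is correct and follows essentially the same route as the paper: identify each difference on a subinterval as a multiple of a single nodal increment, integrate in time to get $\Delta t$ times the sums of squared increments controlled by Theorem \ref{energythm}(3)--(4), and use $J^n_*\ge\delta_1$ to pass to the unweighted norm. The paper only writes out case $(1)_2$ explicitly and declares the rest identical, so your spelled-out treatment of the remaining cases (including the split $\bv^{n+1}-\bv^n=(\bv^{n+1}-\bv^{n+\frac12})+(\bv^{n+\frac12}-\bv^n)$) matches its intent.
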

	\begin{proof} Statement (1)$_1$ is true thanks to Theorem \ref{energythm} (3). We prove (1)$_2$ below exactly as in \cite{TC23}. 
		\begin{align*}
		&	\bE\int_0^T\|\bu_{N}-\tilde\bu_{N}\|^2_{\bL^2(\sO)}dt\\
		&=
			\bE\sum_{n=0}^{N-1}\int_{t^n}^{t^{n+1}}\frac1{\Delta t}\|(t-t^n)\bu^{n+1}+(t^{n+1}-t-\Delta t)\bu^{n}\|^2_{\bL^2(\sO)}dt\\
			&=\bE\sum_{n=0}^{N-1}\|\bu^{n+1}-\bu^n\|_{\bL^2(\sO)}^2\int_{t^n}^{t^{n+1}} \left( \frac{t-t^n}{\Delta t}\right) ^2dt
			\leq \frac{CT}{\delta_1 N} \rightarrow 0 \quad N\rightarrow \infty.
		\end{align*}
		The rest follows identically from the uniform estimates stated in Theorem \ref{energythm}.
	\end{proof}
	To pass $N\to\infty$  in the weak formulation, we consider the following random variable:
	\begin{align}\label{UN}
		\tilde\bU_N(t):=(	(\tilde J_N^*(t))\tilde\bu_N(t),\tilde \bv_N(t))-E_N(t),
	\end{align}
	where $E_N$ is an error term that appears due to discretizing the stochastic integral (see \eqref{approxsystem}) given by:
	$$E_N(t)=\sum_{m=0}^{N-1}\left( \frac{t-t^m}{\Delta t}G(\bu^m,\bfeta^m_*)\Delta_mW-\int_{t^m}^tG(\bu^m,\bfeta^m_*)dW\right) \chi_{[t^m,t^{m+1})}.$$
	Let $	\sU_1:=\sU\cap (\bH^2(\sO)\times \bH^3_0(0,L)).$
	For any $\sV_1\subset\subset \sU_1$, we denote by $\mu^{u,v}_N$ the probability measure of $\tilde\bU_N$: $$\mu^{u,v}_{N}:=\bP\left( \tilde\bU_{N}\in \cdot \right) \in Pr(C([0,T];\sV'_1)) ,$$
	where $Pr(S)$, denotes the set of probability measures on a metric space $S$. Then we have the following tightness result which is proven identically as Lemma 4.6 in \cite{TC23}.
	\begin{lem}\label{uvtightC}
		For a fixed $\ep>0$ and $\delta$, the laws $\{\mu_N^{u,v}\}_N$ of {the random variables } $\{\tilde\bU_N\}_{N}$ 
		are tight in 
		$C([0,T];\sV_1')$.
	\end{lem}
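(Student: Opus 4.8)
The plan is to follow the argument of Lemma 4.6 in \cite{TC23}. The structural point is the compact embedding chain
\[
\bL^2(\sO)\times\bL^2(0,L)\ \hookrightarrow\ \sU_1'\ \subset\subset\ \sV_1',
\]
which holds because $\sV_1\subset\subset\sU_1$ (apply Schauder's theorem to the adjoint of the inclusion) and because $\sU_1=\bigl(V\cap\bH^2(\sO)\bigr)\times\bH^3_0(0,L)$ is continuously and densely embedded in $\bL^2(\sO)\times\bL^2(0,L)$. In particular, bounded subsets of $\bL^2(\sO)\times\bL^2(0,L)$ are relatively compact in $\sV_1'$, and by Lemma \ref{bounds} together with \eqref{boundJ} (and a routine bound on $E_N$), the family $\{\tilde\bU_N\}$ is bounded in $L^2(\Omega;L^\infty(0,T;\bL^2(\sO)\times\bL^2(0,L)))$; this supplies the compactness ``in the space variable''. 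What remains is to control the modulus of continuity of $t\mapsto\tilde\bU_N(t)$ in $\sV_1'$, uniformly in $N$.

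To this end I would rewrite the continuous-in-time approximate identity \eqref{approxsystem} in genuine It\^o form --- this is precisely what the error term $E_N$ is designed to achieve: in $\sV_1'$,
\[
\tilde\bU_N(t)=\tilde\bU_N(0)+\int_0^t\Phi_N(\tau)\,d\tau+\int_0^t\overline G_N(\tau)\,dW(\tau),\qquad \overline G_N:=\sum_m G(\bu^m,\bfeta^m_*)\,\chi_{[t^m,t^{m+1})},
\]
where $\Phi_N$ gathers the advection, viscous, penalty, elastic and pressure contributions. Since $\sV_1\hookrightarrow\bH^2(\sO)\times\bH^3_0(0,L)$, every term of $\Phi_N$ is bounded against these regular test functions using only the $N$-uniform energy estimates of Theorem \ref{energythm} and Lemma \ref{bounds}: the viscous, elastic and pressure terms give $\|\Phi_N^{\mathrm{lin}}(\tau)\|_{\sV_1'}\lesssim \|\bu_N^+(\tau)\|_{\bH^1(\sO)}+\|\bfeta_N^+(\tau)\|_{\bH^2_0(0,L)}+|P^{n}_{in/out}|$; the two penalty terms are controlled by the $\ep^{-1/2}$-weighted bounds (5)--(6) of Lemma \ref{bounds}, which is exactly where the constant is allowed to depend on $\ep$; and, after integrating the operator $\nabla^{\bfeta}$ onto the smooth test function (absorbing the residual $\mathrm{div}^{\bfeta}\bu$ into bound (5) and the interface term into the $\Gamma$-trace), the quadratic advection term gives $\|\Phi_N^{\mathrm{adv}}(\tau)\|_{\sV_1'}\lesssim \|\bu_N^+(\tau)\|_{\bL^2(\sO)}^2\le\|\bu_N^+\|_{L^\infty(0,T;\bL^2(\sO))}^2$. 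Hence $\int_0^\cdot\Phi_N$ has increments bounded by $C_N|t-s|$ in $\sV_1'$ with $\bE\,C_N\le C(\ep,\delta)$, Cauchy--Schwarz in $\omega$ keeping everything at the level of second moments, which is all Theorem \ref{energythm} provides.

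For the martingale part, the Burkholder--Davis--Gundy inequality and \eqref{growthG} give $\bE\,\|\int_s^t\overline G_N\,dW\|_{\bL^2}^2\lesssim |t-s|\,\bE\sup_{[0,T]}\bigl(\|\bu_N\|_{\bL^2(\sO)}^2+\|\bfeta_N^*\|_{\bH^2_0(0,L)}^2\bigr)\lesssim |t-s|$, and more generally the time-translation estimate $\bE\int_h^T\|\int_{\tau-h}^\tau\overline G_N\,dW\|_{\bL^2}^2\,d\tau\lesssim h$. Combining these increment bounds with the $L^\infty(0,T;\bL^2(\sO)\times\bL^2(0,L))$ bound and the compact embedding $\bL^2(\sO)\times\bL^2(0,L)\subset\subset\sV_1'$, Markov's inequality produces, for each $R>0$, a set in $C([0,T];\sV_1')$ that is equicontinuous and pointwise precompact --- hence relatively compact by Arzel\`a--Ascoli --- and that carries $\mu_N^{u,v}$-mass $\ge 1-C(\ep,\delta)/R$ uniformly in $N$; this is the asserted tightness, and the argument runs word for word as in Lemma 4.6 of \cite{TC23}.

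The main obstacle is the uniform time-regularity of the drift $\Phi_N$: one has to check that \emph{every} term of the semidiscrete formulation --- the singular penalty terms (admissible only because $\ep$ is held fixed) and the quadratic advection term (which must be re-expressed via integration by parts onto the $\bH^2(\sO)\times\bH^3_0(0,L)$ test functions) included --- is controlled uniformly in $N$ by the already-established energy estimates using at most second moments in $\omega$. Once this bookkeeping is carried out, the passage to tightness is routine.
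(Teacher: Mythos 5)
Your overall architecture — splitting $\tilde\bU_N$ into a drift part and a martingale part, putting the drift in $H^{1}(0,T;\sU_1')$ via the $N$-uniform energy estimates (with the $\ep$-dependence entering only through the penalty terms), and exploiting the compact embedding $\sU_1'\subset\subset\sV_1'$ — is exactly the route of Lemma 4.6 in \cite{TC23}, which is what the paper invokes. The drift bookkeeping you describe (viscous, elastic, pressure, penalty, and the advection term rewritten against the $\bH^2\times\bH^3_0$ test functions) is correct and matches that proof.

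The gap is in the martingale part. A second-moment increment bound $\bE\|\int_s^t\overline G_N\,dW\|_{\bL^2}^2\lesssim|t-s|$ does \emph{not} yield equicontinuity on a set of large probability, hence does not yield tightness in $C([0,T];\sV_1')$: Kolmogorov's criterion requires $\bE\|Y(t)-Y(s)\|^a\lesssim|t-s|^{1+b}$ with $b>0$, and with $a=2$ your exponent is exactly $1$, which gives nothing (compare: Brownian motion satisfies the same second-moment identity yet is not Lipschitz on any set of positive probability; its tightness in $C$ needs fourth moments). Markov's inequality applied pairwise in $(s,t)$ cannot be upgraded to a uniform modulus of continuity without such a chaining estimate, and your time-translation bound $\bE\int_h^T\|\cdot\|^2\lesssim h$ is the right input for $L^2(0,T;\cdot)$-tightness via Lemma \ref{compactLp}, not for $C([0,T];\cdot)$-tightness. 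The paper's argument closes this by estimating the stochastic convolution in $W^{\alpha,q}(0,T;\sU_1')$ with $q>2$ and $\alpha q>1$ (the Flandoli--G\k{a}tarek inequality), so that it lands in $C^{0,\gamma}([0,T];\sU_1')$; and since, as you correctly note, only second moments are available from Theorem \ref{energythm}, the required $q$-th moments are obtained by first localizing with the stopping times $\tau_M:=\inf\{t:\ \|(\bu_N,\bv_N)\|_{\bL^2}>M\}\wedge T$ and then paying a separate $\bP(\tau_M<T)\le C/M^2$ term. Your proposal explicitly declines to go beyond second moments, which is precisely where the argument as written cannot be completed.
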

	We note here that this result is available only in the case of fixed $\ep>0$ and that we will not have this result in the next section.
In the following lemma, we will show that the stochastic error term vanishes as $N\to \infty$.
	\begin{lem}\label{u*diff}
		{The numerical error $E_N$ of the stochastic term has the following property:}
		$$ \bE\int_0^T\|E_N(t)\|^2_{\bL^2(\sO)\times \bL^2(0,L)}dt  \rightarrow 0 \text{ as } {N\rightarrow\infty}.$$
	\end{lem}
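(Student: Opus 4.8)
The plan is to control the error separately on each subinterval $[t^m,t^{m+1})$ and then sum, exploiting that $E_N$ is, piece by piece, a combination of genuine It\^o integrals of an $\sF_{t^m}$-measurable, hence elementary $(\sF_t)$-adapted, integrand. First I would note that, since $\bu^m$ and $\bfeta^m_*$ are $\sF_{t^m}$-measurable and $G(\bu^m,\bfeta^m_*)$ is constant in time on $[t^m,t^{m+1})$, one has $\frac{t-t^m}{\Delta t}G(\bu^m,\bfeta^m_*)\Delta_mW=\frac{t-t^m}{\Delta t}\int_{t^m}^{t^{m+1}}G(\bu^m,\bfeta^m_*)\,dW$, so that for $t\in[t^m,t^{m+1})$,
\[
E_N(t)=\frac{t-t^m}{\Delta t}\int_{t^m}^{t^{m+1}}G(\bu^m,\bfeta^m_*)\,dW-\int_{t^m}^{t}G(\bu^m,\bfeta^m_*)\,dW.
\]

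Since the intervals $[t^m,t^{m+1})$ partition $(0,T)$, $\bE\int_0^T\|E_N(t)\|^2_{\bL^2(\sO)\times\bL^2(0,L)}\,dt=\sum_{m=0}^{N-1}\int_{t^m}^{t^{m+1}}\bE\|E_N(t)\|^2_{\bL^2(\sO)\times\bL^2(0,L)}\,dt$. On each piece I would apply the triangle inequality (using $\tfrac{t-t^m}{\Delta t}\le1$) followed by the It\^o isometry, which, because $G(\bu^m,\bfeta^m_*)$ does not depend on time on the interval, gives for every $t\in[t^m,t^{m+1})$
\[
\bE\|E_N(t)\|^2_{\bL^2(\sO)\times\bL^2(0,L)}\le 2\,\bE\Big\|\int_{t^m}^{t^{m+1}}\!\!G(\bu^m,\bfeta^m_*)\,dW\Big\|^2+2\,\bE\Big\|\int_{t^m}^{t}\!\!G(\bu^m,\bfeta^m_*)\,dW\Big\|^2\le 4\,\Delta t\,\bE\|G(\bu^m,\bfeta^m_*)\|^2_{L_2(U_0;\bL^2)}.
\]
Integrating in $t$ and summing then yields $\bE\int_0^T\|E_N\|^2\,dt\le 4\,\Delta t\sum_{m=0}^{N-1}\Delta t\,\bE\|G(\bu^m,\bfeta^m_*)\|^2_{L_2(U_0;\bL^2)}$.

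It remains to bound the Riemann-type sum uniformly in $N$. By \eqref{growthG}, $\|G(\bu^m,\bfeta^m_*)\|^2_{L_2(U_0;\bL^2)}\le 2\|\bu^m\|^2_{\bL^2(\sO)}+2\|\bfeta^m_*\|^2_{\bH^2_0(0,L)}$, and since $J^m_*\ge\delta_1$ and $\bfeta^m_*$ coincides with some $\bfeta^k$, $k\le m$, the uniform estimates of Theorem \ref{energythm} give $\sum_{m=0}^{N-1}\Delta t\,\bE\|G(\bu^m,\bfeta^m_*)\|^2_{L_2(U_0;\bL^2)}\le 2T\,\bE\big(\max_{0\le n\le N}\big(\tfrac1{\delta_1}\|\sqrt{J^n_*}\bu^n\|^2_{\bL^2(\sO)}+\|\bfeta^n\|^2_{\bH^2_0(0,L)}\big)\big)\le C$ with $C$ independent of $N$ and $\ep$. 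Hence $\bE\int_0^T\|E_N(t)\|^2\,dt\le C\,\Delta t=CT/N\to0$ as $N\to\infty$.

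The argument is essentially routine; I do not expect a genuine obstacle. The only points deserving a line of care are the identification of the discrete Wiener increment $G(\bu^m,\bfeta^m_*)\Delta_mW$ with the stochastic integral $\int_{t^m}^{t^{m+1}}G(\bu^m,\bfeta^m_*)\,dW$, and verifying that $G(\bu^m,\bfeta^m_*)$ has the measurability (it is $\sF_{t^m}$-measurable, by the construction of the splitting scheme) and the square-integrability in $\Omega$ (from \eqref{growthG} together with the finiteness of $\bE\,E^m$) needed for the It\^o isometry to apply.
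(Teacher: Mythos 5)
Your proposal is correct and follows essentially the same route as the paper: the paper also splits $E_N$ into the interpolated-increment part and the genuine stochastic integral part, bounds the former via $\bE\|G(\bu^m,\bfeta^m_*)\Delta_mW\|^2\le \Delta t\,\mathrm{Tr}(Q)\,\bE\|G\|^2_{L_2(U_0;\bL^2)}$ (the tower-property computation, which is equivalent to your identification of $G\Delta_mW$ with an It\^o integral of a constant integrand) and the latter via the It\^o isometry, and then invokes the uniform estimates of Theorem \ref{energythm} to conclude the bound $C\Delta t$. Your measurability and integrability checks at the end match the hypotheses the paper relies on implicitly.
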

	\begin{proof}
	For any $N$, we begin by writing
		$E_N(t)
		=:E_N^1+E_N^2$. 	Observe that $E^1_N$ satisfies,
		\begin{align*}
			&\bE\int_0^T\|E_N^1(t)\|_{\bL^2}^2dt	=\bE\sum_{n=0}^{N-1} \|G(\bu^n,\bfeta^n_*)\Delta_nW\|_{\bL^2}^2 \int_{t^n}^{t^{n+1}}|\frac{t-t^m}{\Delta t}|^2dt\\
			&=\bE\sum_{n=0}^{N-1} \|G(\bu^n,\bfeta^n_*)\Delta_nW\|_{\bL^2}^2 \frac{\Delta t}{3 }\leq \bE\sum_{n=0}^{N-1} \|G(\bu^n,\bfeta^n_*)\|_{L_2(U_0;\bL^2)}^2\|\Delta_nW\|_{U_0}^2 {\Delta t}\\
			&	= (\Delta t)^2\bE\sum_{n=0}^{N-1} \|G(\bu^n,\bfeta^n_*)\|_{L_2(U_0;\bL^2)}^2 \leq C \Delta t,
		\end{align*}
		where, as a consequence of Theorem \ref{energythm}, $C>0$ is independent of $N$ and $\ep$.
		
		To estimate $E^2_N$ we use the It\^{o} isometry as follows
		\begin{align*}
			&\bE\int_0^T\|E_N^2(t)\|_{\bL^2}^2dt= \bE\sum_{n=0}^{N-1}\int_{t^n}^{t^{n+1}}\|\int_{t^n}^tG(\bu^n,\bfeta_*^n)dW\|_{\bL^2}^2dt\\
			&= \bE\sum_{n=0}^{N-1}\int_{t^n}^{t^{n+1}}\int_{t^n}^t\|G(\bu^n,\bfeta_*^n)\|_{L_2(U_0;\bL^2)}^2dsdt
			\\&= \frac12\bE\sum_{n=0}^{N-1}\|G(\bu^n,\bfeta_*^n)\|_{L_2(U_0;\bL^2)}^2(\Delta t)^2 \leq C\Delta t.
		\end{align*}	
	\end{proof} 

	\subsection{Almost sure convergence}\label{almostsure}
	In this section we will obtain almost sure convergence results for our approximate solutions.
To that end, let 
$\mu_{N}$ be the joint law of the approximate  random variables
$\mathcal{U}_{N}=(\bu^+_{N},\bv^+_{N},\tilde\bfeta^*_{N},\tilde\bfeta_{N}, \|\bu^+_N\|_{L^2(0,T;V)}, \|\bv^*_N\|_{L^2(0,T;\bH_0^1(0,L))},\tilde\bU,W)$ taking values in the phase space
\begin{align*}
	\Upsilon&:=[L^2(0,T;\bL^{2}(\sO))]\times[L^2(0,T;\bH^{-\beta}(0,L))]\times [C([0,T],\bH^s(0,L))]^2\times\R^2\\
	& \times 
	 C([0,T];\sV_1')\times C([0,T];U),
\end{align*}
for some fixed $\frac32<s<2$ and $0<\beta<\frac12$.

Since $C([0,T];U)$ is separable and metrizable by a complete metric,  the sequence of Borel probability measures, $\mu^W_{N}(\cdot):=\bP(W\in\cdot)$, that are constantly equal to one element, is tight on $C([0,T];U)$. Thus, recalling Lemmas \ref{tightuv} \ref{tightl2}, \ref{u*diff} and using Tychonoff's theorem it follows that the sequence of the probability measures $\mu_{N}$ of the approximating sequence $\mathcal{U}_{N}$ is tight on the Polish space $\Upsilon$. Hence,  by applying the Prohorov theorem and a variant of the Skorohod representation theorem (Theorem 1.10.4 in \cite{VW96}) we obtain the following convergence result.
\begin{thm}\label{skorohod} 
	There exists a probability space $(\bar\Omega,\bar\sF,\bar\bP)$ and random variables \\$\bar{\mathcal{U}}_{N}:=(
	\bar{\bu}^+_{N},\bar \bv^{+}_{N},\bar{\tilde\bfeta}^*_{N},\bar{\tilde\bfeta}_{N},m_N,k_N,\bar{\tilde\bU}_N,\bar{W}_N)$ and
	$\bar{\mathcal{U}}:=(
	\bar{\bu},{\bar \bv},\bar{\bfeta}^*,\bar{\bfeta},m,k, \bar{\tilde\bU},\bar{W})$
	defined on this new probability space, such that
	\begin{align}\label{convu}
		\bar{\mathcal{U}}_{N} \rightarrow \bar{\mathcal{U}},\qquad\quad\bar\bP-a.s. \quad\text{in the topology of } \Upsilon.
	\end{align}
	Additionally, there exist measurable maps $\phi_N:\bar\Omega\rightarrow\Omega$ such that \begin{align}\label{newrv}
		\bar{\mathcal{U}}_N(\bar\omega)={\mathcal{U}}_N(\phi_N(\bar\omega)) \quad \text{ for }\bar\omega\in\bar\Omega,
	\end{align} and $\bar \bP\circ\phi_N^{-1}=\bP$.
	
\end{thm}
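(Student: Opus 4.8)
The plan is to combine the tightness estimates already in hand with Prohorov's theorem and the almost--sure representation theorem of \cite{VW96}; since every analytic ingredient is in place, the argument is essentially bookkeeping. First I would record that $\Upsilon$ is a Polish space: each factor --- $L^2(0,T;\bL^2(\sO))$, $L^2(0,T;\bH^{-\beta}(0,L))$, the two copies of $C([0,T];\bH^s(0,L))$, $C([0,T];\sV_1')$ (choosing $\sV_1\subset\subset\sU_1$ so that this is a separable Banach space), and $C([0,T];U)$ --- is a separable Banach space, while $\R$ is Polish, and a finite product of Polish spaces is Polish, so $\Upsilon$ is separable and completely metrizable.

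Next I would assemble tightness of $\{\mu_N\}$ on $\Upsilon$ from its marginals. The laws of $\bu^+_N$ are tight in $L^2(0,T;\bL^2(\sO))$ by Lemma \ref{tightuv} (taking $\alpha=0$), the laws of $\bv^+_N$ are tight in $L^2(0,T;\bH^{-\beta}(0,L))$ by the same lemma, the laws of $\tilde\bfeta^*_N$ and $\tilde\bfeta_N$ are tight in $C([0,T];\bH^s(0,L))$ by Lemma \ref{tightl2}(1), the laws of $\|\bu^+_N\|_{L^2(0,T;V)}$ and $\|\bv^*_N\|_{L^2(0,T;\bH^1_0(0,L))}$ are tight in $\R$ by Lemma \ref{tightl2}(2)--(3), the laws of $\tilde\bU_N$ are tight in $C([0,T];\sV_1')$ by Lemma \ref{uvtightC}, and the law of $W$ is a single Radon Gaussian measure on the Polish space $C([0,T];U)$, hence trivially tight. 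Given $\varepsilon>0$, I would pick for each of the eight marginals a compact set carrying all but $\varepsilon/8$ of its mass and take the product; Tychonoff then produces a compact subset of $\Upsilon$ outside which every $\mu_N$ has mass below $\varepsilon$, so $\{\mu_N\}$ is tight.

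Then Prohorov's theorem makes $\{\mu_N\}$ relatively weakly compact, so along a subsequence (not relabeled) $\mu_N$ converges weakly to some Borel probability measure $\mu$ on $\Upsilon$. Because $\Upsilon$ is separable, the almost--sure representation theorem in the form of Theorem 1.10.4 of \cite{VW96} then yields a probability space $(\bar\Omega,\bar\sF,\bar\bP)$, perfect measurable maps $\phi_N:\bar\Omega\to\Omega$ with $\bar\bP\circ\phi_N^{-1}=\bP$, and a $\Upsilon$-valued random variable $\bar{\mathcal{U}}$ with law $\mu$, such that $\bar{\mathcal{U}}_N:=\mathcal{U}_N\circ\phi_N$ converges to $\bar{\mathcal{U}}$ $\bar\bP$-a.s.\ in the topology of $\Upsilon$; reading off the components of $\bar{\mathcal{U}}_N$ and $\bar{\mathcal{U}}$ then gives \eqref{convu}--\eqref{newrv}.

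I expect no genuine analytic obstacle here, since all the compactness has already been absorbed into Lemmas \ref{tightuv}--\ref{uvtightC}. The one delicate point is invoking the \emph{right} version of the Skorokhod theorem: what is needed is not merely new random variables sharing the laws $\mu_N$, but the measurable maps $\phi_N$ realizing the $\omega$-wise identity $\bar{\mathcal{U}}_N(\bar\omega)=\mathcal{U}_N(\phi_N(\bar\omega))$ together with $\bar\bP\circ\phi_N^{-1}=\bP$. This stronger representation --- rather than equality in law alone --- is precisely what will allow the discrete weak formulation \eqref{second}, the uniform bounds of Theorem \ref{energythm}, and the $(\sF_t)$-adaptedness of the approximations to be transferred onto the new stochastic basis when passing $N\to\infty$; it hinges on the target space being separable, which is why the Polishness of $\Upsilon$ (and in particular a careful choice of $\sV_1$) must be checked.
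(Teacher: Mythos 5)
Your proposal is correct and follows essentially the same route as the paper: tightness of each marginal (Lemmas \ref{tightuv}, \ref{tightl2}, \ref{uvtightC}, plus the trivially tight law of $W$), Tychonoff to get tightness of the joint laws on the Polish space $\Upsilon$, Prohorov for weak convergence along a subsequence, and Theorem 1.10.4 of \cite{VW96} for the almost-sure representation together with the measure-preserving maps $\phi_N$. Your emphasis on needing the stronger (Jakubowski/van der Vaart--Wellner) form of Skorokhod that supplies the pointwise identity \eqref{newrv} is exactly the point the paper relies on when transferring the discrete weak formulation to the new basis.
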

Now we define rest of the approximate random variables:
	\begin{align*}
		\bar\bu_N=\bu_N\circ\phi_N,\quad 	\bar{\tilde\bu}_N=\tilde\bu_N\circ\phi_N, \quad \bar \bv_N=\bv_N\circ\phi_N\\
		\bar{\bfeta}_N={\bfeta}_N\circ\phi_N,\quad	{\bar{\bfeta}}^+_N={\bfeta}^+_N\circ\phi_N, \quad	\bar{\bfeta}^*_N={\bfeta}^*_N\circ\phi_N		.
	\end{align*}
	Then, from part (1), Lemmas \ref{difference} and an application of the Borel-Cantelli lemma, we obtain for $\frac32<s<2$ and $0<\beta<\frac12$ that
	\begin{align}
		\bar\bu_N \to \bar\bu,\quad 	\bar{\tilde\bu}_N \to \bar\bu, \quad \bar\bP-a.s. \quad\text{in } L^2(0,T;\bL^2(\sO)),\\	\bar{\bfeta}_N\to\bar{\bfeta},\quad{\bar{\bfeta}}^+_N\to\bar{\bfeta}, \quad	\bar{\bfeta}^*_N \to\bar{\bfeta}^*, \quad \bar\bP-a.s. \quad\text{in } L^2(0,T;\bH^s(0,L)),\\
		\bar \bv_N \to \bar \bv, \quad	\bar{\tilde \bv}_N \to \bar \bv, \quad \bar\bP-a.s. \quad\text{ in } L^2(0,T;\bH^{-\beta}(0,L)).
	\end{align}
		Thanks to these explicit maps we can identify the real-valued random variables ${k}_N= \|\bar \bv^*_N\|_{L^2(0,T;\bH_0^1(0,L))}$. Thus, a.s. convergence of ${k}_N$ implies that, for a fixed $\ep>0$, $\|\bar \bv^*_N\|_{L^2(0,T;\bH_0^1(0,L))}$ is bounded a.s. and thus, up to a subsequence, 
	\begin{align}\label{vweak}
		\bar\bv^*_N \rightharpoonup \bar\bv^* \quad\text{ weakly in } {L^2(0,T;\bH_0^1(0,L))} \quad \bar\bP-a.s.
	\end{align}
	Similarly $\bar\bu^+_N\rightharpoonup\bar\bu$ weakly in $L^2(0,T;V)$ a.s.
	
	Observe also that the bounds obtained in Lemma \ref{bounds} hold for the new random variables $\bar{\mathcal{U}}_{N}$ as well. Particularly, thanks to \eqref{convu} and \eqref{newrv}, we have the same deterministic bounds $\|\bar\bfeta^*\|_{C(0,T;\bH^s(0,L))} \leq \frac1{\delta_2}$ for  $\frac32<s<2$.
		We also have the following convergence results for the displacements. Namely,
	notice that Theorem \ref{skorohod} implies that for given $\frac32<s<2$ (see \cite{MC13} Lemma 3),
	\begin{align}\label{etaunif1}
		\bar{\bfeta}_{N} \rightarrow \bar{\bfeta} \text{ and } \bar{\bfeta}^*_{N} \rightarrow \bar{\bfeta}^* \,\,\,\text { in } L^\infty(0,T;\bH^s(0,L))\, a.s.
	\end{align}
	and thus,
	\begin{align}\label{etaunif}
		\bar{\bfeta}_{N} \rightarrow \bar{\bfeta} \text{ and } \bar{\bfeta}^*_{N} \rightarrow \bar{\bfeta}^*\,\,\, \text { in } L^\infty(0,T;\bC^1[0,L])\, a.s.
	\end{align}
Next we define piecewise constant interpolations of the ALE maps and Jacobians $A_{\bar\bfeta^*_N}, J_{\bar\bfeta^*_N}$, and their piecewise linear interpolation $\tilde{A}_{\bar\bfeta^*_N},\tilde { J}_{\bar\bfeta^*_N}$. Then \eqref{etaunif}, \eqref{boundJ} and \eqref{A2p}, yield
\begin{equation}
	\begin{split}\label{convrest}
		& \nabla A_{\bar\bfeta^*_N} \rightarrow \nabla A_{\bar\bfeta^*} \text{ and } (\nabla A_{\bar\bfeta^*_N})^{-1} \rightarrow (\nabla A_{\bar\bfeta^*})^{-1} \quad \text{ in } L^\infty(0,T;\bC(\sO))\, a.s.\\
		&			 J_{\bar\bfeta^*_N} \rightarrow  J_{\bar\bfeta^*} =\text{det}(\nabla A_{\bar\bfeta^*})\quad \text{ in } L^\infty(0,T;C(\bar\sO))\, a.s.\\
		&	 S_{\bar\bfeta_N} \rightarrow  S_{\bar\bfeta},\text{ and } S_{\bar\bfeta^*_N} \rightarrow  S_{\bar\bfeta^*}\quad \text{ in } L^\infty(0,T;C(\bar\Gamma)) \, a.s.
	\end{split}
\end{equation}	
Furthermore, $A_{\bar\bfeta^*} \in L^\infty(0,T;\bW^{2,p}(\sO)$ for $p<4$ and is the solution to \eqref{ale} corresponding to the boundary data $\text{\bf id} + \bar\bfeta^*$ on $\Gamma$.
Next let $\bar\bw^*_N=\partial_t \tilde{A}_{\bar\bfeta^*_N}=\sum_{n=0}^N\frac{1}{\Delta t}(A^\omega_{\bar{\bfeta}_*^{n}}-A^\omega_{\bar{\bfeta}^{n-1}_*})\chi_{(t_n,t_{n+1})}$. Note that for every $\omega\in\bar\Omega$, we have (see \cite{G11}):
\begin{align*}
	\|\bar\bw^*_N
	\|_{L^2(0,T;\bH^{k+\frac12}(\sO))}\leq C \|\bar\bv^{*}_N
	\|_{L^2(0,T;\bH^k(0,L))},\quad \text{ for any } 0\leq k\leq  1,
\end{align*}
where $C$ depends only on $k$. Thus using \eqref{vweak}, for a fixed $\ep>0$, we obtain that,
\begin{align}\label{convw}
	\bar\bw^*_N \rightharpoonup \bar\bw^*\, \quad \text{ weakly in } L^2(0,T;\bH^{1}(\sO)),\quad \, \bar\bP-a.s,
\end{align}
where $\bar\bw^*$ satisfies \eqref{eqn_w} with values $\bar\bv^*$ on $\Gamma$. Similarly, \eqref{boundJt2} gives us
\begin{align*}
	\partial_t \tilde J_{\bar\bfeta^*_N} \rightharpoonup 	\partial_t  J_{\bar\bfeta^*} \quad \text{ weakly in } L^2(0,T;\bL^{2}(\sO)),\quad \, \bar\bP-a.s.
\end{align*}

Finally, we give the definition of the required filtrations. we denote by $\sF_t'$ the $\sigma$-field generated by the random variables $(\bar{\bu}(s),\bar \bv(s)),\bar\bfeta(s),\bar{W}(s)$ for all $s \leq t$. Then we define
\begin{align}\label{Ft}
	\mathcal{N} &:=\{\mathcal{A}\in \bar{\mathcal{F}} \ |  \bar \bP(\mathcal{A})=0\},\qquad
	\bar{\mathcal{F}}^0_t:=\sigma(\mathcal{F}_t' \cup \mathcal{N}),\qquad
	\bar{\mathcal{F}}_t :=\bigcap_{s\ge t}\bar{\mathcal{F}}^0_s.
\end{align}
We note here that the stochastic processes $(J_{\bar\bfeta^*}\bar\bu,\bar \bfeta)$ that are $(\bar\sF_t)_{t\geq 0}-$progressively measurable.
For each $N$ we also define a filtration $\left(\bar{\mathcal{F}}^{N}_t \right)_{t \geq 0}$ on  $(\bar \Omega,\bar{\mathcal{F}},\bar \bP)$ the same way. 
  Moreover, using usual arguments we can see that $\bar W_N$ is an $\bar\sF^N_t$-Wiener process (see e.g. \cite{BFH18}).	
Next, relative to the new stochastic basis $(\bar\Omega,\bar\sF,(\bar\sF^{N}_t)_{t\geq0},\bar\bP,\bar W_N)$, 
thanks to \eqref{newrv} we can see that for each $N$, the following equation holds
\begin{equation}\begin{split}\label{approxsystem}	
		&(	(\tilde J_{\bar\bfeta^*_N}(t),\bq)+(\bar{\tilde \bv}_N(t),\bfpsi)
		=(J_0\bu_0,\bq)+(\bv_0,\bfpsi)-\int_0^t\langle \sL_e(\bar{\bfeta}^+_N),\bfpsi \rangle \\
		& +\int_0^t\int_{\sO}\frac{\partial\tilde J_{\bar\bfeta^*_N}}{\partial t}(2\bar{\tilde\bu}_N-{\bar \bu}_N)\cdot\bq-\frac12\frac{\partial\tilde J_{\bar\bfeta^*_N}}{\partial t}\bar\bu^{+}_N\cdot\bq -\ep\int_0^t\int_0^L\partial_{zz}\bar{\bv}^{\#}_N\cdot\partial_{zz}\bfpsi\\
		&-\frac12\int_0^t\int_{\sO}(J_{\bar\bfeta^*_N})((\bar\bu^+_N-
		\bar\bw^*_N)\cdot\nabla^{\bar{\bfeta}_N^*}\bar\bu^{+}_N\cdot\bq
		- (\bar\bu^+_N-
		\bar\bw^*_N)\cdot\nabla^{\bar{\bfeta}_N^*}\bq\cdot\bar\bu^{+}_N)\\
		&-2\nu\int_0^t\int_{\sO}(J_{\bar\bfeta^*_N})\bD^{\bar{\bfeta}_N^*}(\bar\bu^{+}_N)\cdot \bD^{\bar{\bfeta}_N^*}(\bq) -\frac1\alpha\int_0^t\int_\Gamma  S_{\bar\bfeta_N}(\bar\bu^{+}_N-\bar\bv^{+}_N)(\bq-\bfpsi)\\
		&- \frac1\ep\int_0^t\int_\sO 
		\text{div}^{{\bar\bfeta}^*_N}\bar\bu^{+}_N\text{div}^{{\bar\bfeta}^*_N}\bq d\bx -\frac1\ep\int_0^t\int_{\Gamma} (\bar\bu^{+}_N-\bar\bv^{+}_N)\cdot\bar\bn^*_N(\bq-\bfpsi^{})\cdot\bar\bn^*_N\\
		&  
	\pm	\int_0^t\left( P_{{in/out}}\int_{0}^1q_z\Big|_{z=0/1}\right)
		+\int_0^t
		(G({\bar \bu}_N,\bar{\bfeta}^*_N)d\bar W_N, \bQ)+(E_N(t),\bQ),
\end{split}\end{equation}
$\bar\bP$-a.s. for every $t \in [0,T]$ and any $\bQ\in \sD\cap \sU_1$.

Using the convergence results stated in Theorem \ref{skorohod} we can then pass $N \to \infty$ in the deterministic terms in \eqref{approxsystem}.
For the stochastic integral see Lemma \ref{conv_G}. We mention here how we treat the convective term which is the only term that needs an explanation. By integrating by parts we obtain
\begin{align}
\notag	\int_\sO  J_{\bar\bfeta^*_N} \bar\bw_N^*\cdot\nabla^{\bar\bfeta_N^*}\bar\bu^+_N\cdot\bq&=-\int_{\sO} {J}_{\bar\bfeta^*_N} \nabla^{\bar\bfeta^*_N}\cdot\bar\bw_N^*\bar{\bu}^+_N \cdot\bq-\int_{\sO} {J}_{\bar\bfeta^*_N} \bar\bw^*_N\cdot \nabla^{\bar\bfeta^*_N}\bq \cdot \bar{\bu}^+_N\\
	&+\int_\Gamma S_{\bar\bfeta_N^*}(\bar\bv_N^*\cdot\bn_N^*)(\bar\bu^+_N\cdot\bq),\label{byparts}
\end{align}
where $S_{\bar\bfeta^*_N}$ is the Jacobian of the transformation from Eulerian to Lagrangian coordinates.
Thus, using the weak and strong convergence results in Theorem \ref{skorohod}, \eqref{vweak}, \eqref{convw} and \eqref{etaunif} we can pass $N\to \infty$ in \eqref{byparts}. Then using the fact that,
\begin{align}\label{Jform}	\int_0^t\int_{\sO}\partial_tJ_{\bar\bfeta^*}\bar{\bu} \cdot\bq=\int_0^t\int_{\sO}J_{\bar\bfeta^*} \nabla^{\bfeta^*}\cdot\bw^*\bar{\bu} \cdot\bq,
\end{align}
we arrive at the following weak formulation.
	\begin{thm}\label{exist1} 
		For the stochastic basis $(\bar\Omega,\bar\sF,(\bar\sF_t)_{t \geq 0},\bar\bP,\bar W)$ constructed in Theorem \ref{skorohod}, given any fixed $\ep>0$ and $\delta=(\delta_1,\delta_2)$ satisfying \eqref{delta},  {the processes $(\bar\bu,\bar\bfeta,\bar\bfeta^*)$ obtained in Theorem \ref{skorohod} are such that} $\bar\bfeta^*$ and $(\bar J_{\bfeta^*}\bar\bu,\partial_t\bar\bfeta)$ are $(\bar\sF_t)_{t \geq 0}$-progressively measurable with $\bar\bP$-a.s. continuous paths in $\bH^s(0,L)$, $s<2$ and $\sV_1'$ respectively and the following weak formulation 
		holds $\bar\bP$-a.s. for every $t\in[0,T]$ and $\bQ\in \sD$:
		\begin{equation}\begin{split}\label{martingale1}
			&(	{J}_{\bar\bfeta^*}(t)\bar\bu (t),\bq)+(\partial_t\bar{ {\bfeta}}(t),\bfpsi)
			=((J_0)\bu_0,\bq)+( \bv_0,\bfpsi)-\int_0^t \langle \sL_e(\bar{\bfeta}),\bfpsi \rangle \\
			&-\frac12\int_0^t\int_{\sO}(J_{\bar\bfeta^*})(\bar\bu\cdot\nabla^{\bar\bfeta ^*}\bar\bu \cdot\bq
			- (\bar\bu^{}-2\bar\bw^*)\cdot\nabla^{\bar\bfeta ^*}\bq\cdot\bar\bu ) 		\\
			&+\frac12\int_0^t\int_\Gamma S_{\bar\bfeta^*}(\partial_t\bar{ {\bfeta}}^*\cdot\bn^*)(\bar\bu\cdot\bq)	-\frac1\alpha\int_0^t\int_\Gamma S_{\bar\bfeta} (\bar\bu-\partial_t\bar{\bfeta})(\bq-\bfpsi) \\
			&-2\nu\int_0^t\int_{\sO}J_{\bar\bfeta^*}\bD^{{\bar\bfeta}^*}(\bar\bu)\cdot \bD^{{\bar\bfeta}^*}(\bq) d\bx -\ep \int_0^t\int_{0}^L\partial_{zz}\partial_t\bar{ {\bfeta}}\cdot\partial_{zz}\bfpsi dz \\
			&- \frac{1}{\ep}\int_0^t\int_{\sO}
			\text{div}^{{\bar\bfeta}^*}\bar\bu\text{div}^{{\bar\bfeta}^*}\bq d\bx -\frac{1}{\ep}\int_0^t\int_{\Gamma} (\bar\bu-\partial_t\bar{ {\bfeta}})\cdot\bn^*(\bq-\bfpsi)\cdot\bn^*\\
			&+ 
			\int_0^t\left( P_{{in}}\int_{0}^1q_z\Big|_{z=0}dr-P_{{out}}\int_{0}^1q_z\Big|_{z=1}dr\right)
			ds +\int_0^t
			(	G(\bar\bu ,\bar{\bfeta}^*)d\bar W , \bQ) .
		\end{split}
	\end{equation}
	\end{thm}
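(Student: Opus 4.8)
The plan is to pass to the limit $N\to\infty$ in the approximate identity \eqref{approxsystem}, which holds $\bar\bP$-a.s. for every $t\in[0,T]$ on the stochastic basis $(\bar\Omega,\bar\sF,(\bar\sF^N_t)_{t\geq0},\bar\bP,\bar W_N)$, and then to verify the measurability and path‑regularity assertions. The linear data, elastic and pressure terms converge at once from $\bar\bfeta^+_N\to\bar\bfeta$ in $L^2(0,T;\bH^s(0,L))$; the numerical error satisfies $E_N(t)\to0$ in $L^2(\bar\Omega;L^2(0,T;\bL^2))$ by Lemma \ref{u*diff}; and the $\bar\bfeta^*$-process has $\bar\bP$-a.s. continuous paths in $\bH^s(0,L)$, $s<2$, because $\bar{\tilde\bfeta}^*_N\to\bar\bfeta^*$ in $C([0,T];\bH^s(0,L))$ by Theorem \ref{skorohod}.

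For the remaining deterministic terms I would use the convergences in Theorem \ref{skorohod}, Lemma \ref{difference}, \eqref{convrest}, \eqref{etaunif}, \eqref{vweak}, \eqref{convw} and \eqref{boundJt2}, arranging every product so that exactly one factor carries a strong convergence while the geometric coefficients carry a uniform one. The mass‑type terms $({\tilde J}_{\bar\bfeta^*_N}(t)\bar{\tilde\bu}_N(t),\bq)$ and $\int_0^t\int_\sO\partial_t{\tilde J}_{\bar\bfeta^*_N}(2\bar{\tilde\bu}_N-\bar\bu_N)\cdot\bq$ pass by combining $J_{\bar\bfeta^*_N}\to J_{\bar\bfeta^*}$ uniformly, $\partial_t{\tilde J}_{\bar\bfeta^*_N}\rightharpoonup\partial_t J_{\bar\bfeta^*}$ weakly in $L^2(0,T;\bL^2(\sO))$, and the strong $L^2(0,T;\bL^2(\sO))$ convergence $\bar\bu_N,\bar{\tilde\bu}_N,\bar\bu^+_N\to\bar\bu$ (keeping $\partial_t{\tilde J}$ in the weak slot and a copy of $\bar\bu$ in the strong one). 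The viscous term, the divergence‑penalty and normal‑penalty terms, and the Navier‑slip boundary term pass using the weak $L^2$ convergence of $\nabla\bar\bu^+_N$ (resp.\ the weak $L^2(\Gamma)$ convergence of $\bar\bu^+_N-\bar\bv^+_N$) together with the uniform convergence of $J_{\bar\bfeta^*_N}$, $(\nabla A_{\bar\bfeta^*_N})^{-1}$, $\bar\bn^*_N$, $S_{\bar\bfeta_N}$ from \eqref{convrest}--\eqref{etaunif}; the $\ep$-regularization term passes since $\bar\bv^\#_N=\partial_t\bar{\tilde\bfeta}_N\rightharpoonup\partial_t\bar\bfeta$ in $L^2(0,T;\bH^2_0(0,L))$, the $\ep$-dependent bound of Lemma \ref{bounds}(7) being available at this stage. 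For the advection term I would apply the integration‑by‑parts identity \eqref{byparts} to isolate the genuinely nonlinear piece $\int_\sO J_{\bar\bfeta^*_N}\bar\bu^+_N\cdot\nabla^{\bar\bfeta^*_N}\bar\bu^+_N\cdot\bq$, which converges because one copy of $\bar\bu^+_N$ converges strongly in $L^2(0,T;\bL^2(\sO))$ while the other converges weakly in $L^2(0,T;V)$ and $\bq\in\sD$ is fixed; the $\bar\bw^*_N$-terms converge via \eqref{convw} against strong convergence of $\bar\bu^+_N$, and the surface term via \eqref{vweak} and \eqref{etaunif}. The algebraic identity \eqref{Jform} then recombines these contributions into the symmetric advection form of \eqref{martingale1}.

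For the stochastic integral I would invoke the standard convergence‑of‑stochastic‑integrals argument: since $G$ is Lipschitz by Assumption \ref{G}, the strong convergences $\bar\bu_N\to\bar\bu$ in $L^2(0,T;\bL^2(\sO))$ and $\bar\bfeta^*_N\to\bar\bfeta^*$, together with $\bar W_N\to\bar W$ in $C([0,T];U)$, yield $\int_0^t(G(\bar\bu_N,\bar\bfeta^*_N)\,d\bar W_N,\bQ)\to\int_0^t(G(\bar\bu,\bar\bfeta^*)\,d\bar W,\bQ)$ in probability, for each $\bQ\in\sD$. For the measurability and path‑continuity claims, the filtration $(\bar\sF_t)$ in \eqref{Ft} is the augmented right‑continuous filtration generated by $(\bar\bu,\bar\bv,\bar\bfeta,\bar W)$; the limit processes are $(\bar\sF_t)$-adapted, $\bar W$ is an $(\bar\sF_t)$-Wiener process, and progressive measurability of $(J_{\bar\bfeta^*}\bar\bu,\partial_t\bar\bfeta)$ follows once its paths are known to be $\bar\bP$-a.s.\ continuous in $\sV_1'$. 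That continuity is inherited from the convergence $\bar{\tilde\bU}_N\to\bar{\tilde\bU}$ in $C([0,T];\sV_1')$ of Theorem \ref{skorohod}, after identifying $\bar{\tilde\bU}=(J_{\bar\bfeta^*}\bar\bu,\partial_t\bar\bfeta)$, which uses $E_N\to0$ and the limits ${\tilde J}_{\bar\bfeta^*_N}\bar{\tilde\bu}_N\to J_{\bar\bfeta^*}\bar\bu$, $\bar{\tilde\bv}_N\to\partial_t\bar\bfeta$.

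The main obstacle I anticipate is precisely this bookkeeping in the nonlinear and geometric terms---ensuring that each product is split with exactly one strongly converging factor, which is most delicate for the advection term and the $\partial_t{\tilde J}$-terms---and the careful identification of the limit of the piecewise‑linear interpolant $\bar{\tilde\bU}_N$ with $(J_{\bar\bfeta^*}\bar\bu,\partial_t\bar\bfeta)$, so that the continuity in $\sV_1'$, and hence the progressive‑measurability claim, is fully justified. By comparison, the passage in the stochastic integral, though technical, is by now routine given the Lipschitz hypothesis on $G$ and the Skorokhod convergences.
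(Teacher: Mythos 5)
Your proposal is correct and follows essentially the same route as the paper: pass to the limit in \eqref{approxsystem} term by term using the Skorohod convergences, handle the convective term via the integration-by-parts identity \eqref{byparts} combined with \eqref{Jform} (which is exactly how the boundary term $\frac12\int_\Gamma S_{\bar\bfeta^*}(\partial_t\bar\bfeta^*\cdot\bn^*)(\bar\bu\cdot\bq)$ arises), treat the stochastic integral by the Lipschitz property of $G$ together with the a.s.\ convergences (the paper's Lemma \ref{conv_G}), and obtain path continuity and progressive measurability from the $C([0,T];\sV_1')$ convergence of $\bar{\tilde\bU}_N$ after identifying its limit with $(J_{\bar\bfeta^*}\bar\bu,\partial_t\bar\bfeta)$ via Lemmas \ref{difference} and \ref{u*diff}. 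No substantive deviation or gap relative to the paper's argument.
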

	{\s Next, we argue that 
		\begin{align}\label{etasequal1}
			\bar{\bfeta}^*(t)=\bar{\bfeta}(t) \quad \text{ for any } t<T^{\bfeta}, \quad \bar\bP-a.s.
		\end{align}	
		{ where for a given $\delta=(\delta_1,\delta_2)$,
			$$	T^{\bfeta}  := T\wedge\inf\{t>0:\inf_{\sO}(J_{\bar\bfeta}(t))\leq \delta_1 \text{ or } \|\bar{\bfeta}(t)\|_{\bH^s(0,L)}\geq \frac1{\delta_2} \}.$$}
		Indeed, to prove \eqref{etasequal1}, we introduce the following stopping times.
		For $\frac32<s<2$ we define
		\begin{align*}
			T^{\bfeta}_N &:=T\wedge \inf\{t> 0:\inf_{\sO}(J_{\bar\bfeta_N}(t))\leq \delta_1 \text{ or } \|\bar{\bfeta}_N(t)\|_{\bH^s(0,L)}\geq \frac1{\delta_2}\}.
		\end{align*}
		Then  \eqref{etaunif} implies that $T^{\bfeta} \leq \liminf_{N\rightarrow\infty}T^{\bfeta}_N$ a.s. 
		Observe further that for almost any $\omega\in\bar\Omega$ and $t<T^{\bfeta}$, and for any $\epsilon>0$, there exists an $N$ such that
		\begin{align*}
			\|\bar{\bfeta}(t)-\bar{\bfeta}^*(t)\|_{\bH^s(0,L)}&<\|\bar{\bfeta}(t)-\bar{\bfeta}_N(t)\|_{\bH^s(0,L)}+\|\bar{\bfeta}^*(t)-\bar{\bfeta}^*_N(t)\|_{\bH^s(0,L)}\\
			&+\|\bar{\bfeta}^*_N(t)-\bar{\bfeta}_N(t)\|_{\bH^s(0,L)}
			<\epsilon.
		\end{align*}
		This is true because, the uniform convergence \eqref{etaunif} implies that for any $\epsilon>0$ there exists an $N_1 \in \mathbb{N}$ such that the first two terms on the right side of the above inequality
		are each bounded by $\frac{\epsilon}2$ for all $N\geq N_1$.
		Moreover, since $t<T^{\bfeta}_N$ for infinitely many $N$'s, the third term is equal to 0. This concludes the proof of \eqref{etasequal1}.
	}

	\section{Passing to the limit $\ep\to 0$}
To emphasize the dependence on the parameter $\ep>0$, hereon we will denote the martingale solution found in the previous section by $(\bar \bu_\ep,\bar \bv_\ep,\bar \bv^*_\ep, \bar \bfeta_\ep,\bar\bfeta^*_\ep,\bar W_\ep)$ and $(\Omega^\ep,\sF^\ep,(\sF^\ep_t)_{t\geq 0},\bP^\ep)$.  
The aim of this section is to pass $\ep \rightarrow 0$ in \eqref{martingale1} by constructing appropriate test functions. Most of the results in the first half of this section can be proven as in the previous section and so we will only summarize the important theorems without proof. Observe that, thanks to the weak lower-semicontinuity of norm, the uniform estimates obtained in the previous section still hold. As a consequence of Lemmas \ref{bounds} and Theorem \ref{skorohod}, we thus have the following uniform bounds. 
\begin{lem}[Uniform boundedness]\label{boundsep}
	For a fixed $\delta=(\delta_1,\delta_2)$ that satisfies \eqref{delta}, we have for some $C>0$ {\bf independent of} $\ep$ that
	\begin{enumerate}
		\item $\bE^\ep\|\bar\bu_\ep\|^2_{L^\infty(0,T;\bL^2(\sO))\cap L^2(0,T;V)}<C.$
		\item $\bE^\ep\|\bar \bv^*_\ep\|^2_{L^\infty(0,T;\bL^2(0,L))}<C$.
		\item $\bE^\ep\|\bar \bfeta_\ep\|^2_{L^\infty(0,T;\bH^2_0(0,L)\,\cap\, W^{1,\infty}(0,T;\bL^2(0,L)))}<C.$
		\item $\bE^\ep\|\bar \bfeta^*_\ep\|^2_{L^\infty(0,T;\bH^2_0(0,L)\,\cap\, W^{1,\infty}(0,T;\bL^2(0,L)))}<C.$
		\item $\bE^\ep\|\text{div}^{\bar\bfeta^*_\ep}\bar\bu_\ep\|^2_{L^2(0,T;L^2(\sO))} < C{\ep}$, $\bE^\ep\|(\bar\bu_\ep|_\Gamma-\bar\bv_\ep)\cdot\bn^*_\ep\|^2_{L^2(0,T;L^2(0,L))} < C{\ep}$.
		\item $\sqrt{\ep}\bE^\ep\| \partial_{zz}\bar\bv_\ep\|^2_{L^2(0,T;\bL^2(0,L))} < C$.
		\item $\|\bar\bfeta_\ep^*\|_{C(0,T;\bH^s(0,L))} \leq \frac{1}{\delta_2}$ for  $\frac32<s<2$, for almost every $\omega\in\Omega^\ep$.
	\end{enumerate}
\end{lem}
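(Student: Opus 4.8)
The plan is to transfer the $N$- and $\ep$-uniform estimates of Lemma \ref{bounds} to the new probability space produced by Theorem \ref{skorohod}, and then to pass $N\to\infty$ using weak lower semicontinuity of norms. First I would note that, since the measurable maps $\phi_N$ of Theorem \ref{skorohod} satisfy $\bar\bP\circ\phi_N^{-1}=\bP$, each of the barred random variables $\bar\bu^+_N=\bu^+_N\circ\phi_N$, $\bar\bv^{\#}_N$, $\bar\bfeta_N$, $\bar\bfeta^*_N$, $\mathrm{div}^{\bar\bfeta^*_N}\bar\bu^+_N$, $(\bar\bu^+_N-\bar\bv^+_N)\cdot\bar\bn^*_N$, etc., has the same law as its counterpart on $(\Omega,\sF,\bP)$; in particular the bounds (1)--(7) of Lemma \ref{bounds}, which are uniform in both $N$ and $\ep$, hold verbatim for these barred variables with the same constants.

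Next I would fix $\ep>0$ and let $N\to\infty$. For the time-integrated bounds (the $L^2(0,T;V)$ part of item (1), and items (5)--(6)) I would use that $\bar\bu^+_N\rightharpoonup\bar\bu_\ep$ weakly in $L^2(0,T;V)$ $\bar\bP$-a.s., that $\bar\bv^*_N\rightharpoonup\bar\bv^*_\ep$ weakly in $L^2(0,T;\bH^1_0(0,L))$ $\bar\bP$-a.s.\ (cf.\ \eqref{vweak}), and --- via Lemma \ref{difference} together with the boundedness of $\sqrt\ep\,\bar\bv^{\#}_N$ in $L^2(0,T;\bH^2_0(0,L))$ --- that $\sqrt\ep\,\bar\bv^{\#}_N\rightharpoonup\sqrt\ep\,\bar\bv_\ep$ weakly in that space. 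For the divergence and boundary terms in item (5) I would combine the strong convergences $\nabla A_{\bar\bfeta^*_N}\to\nabla A_{\bar\bfeta^*}$, $(\nabla A_{\bar\bfeta^*_N})^{-1}\to(\nabla A_{\bar\bfeta^*})^{-1}$ and $\bar\bn^*_N\to\bar\bn^*$ of \eqref{convrest}--\eqref{etaunif} with the weak convergence of $\nabla\bar\bu^+_N$ and of the trace of $\bar\bu^+_N$, and with the distributional convergence of $\bar\bv^+_N$, to identify the weak $L^2(0,T;L^2(\sO))$, resp.\ $L^2(0,T;L^2(\Gamma))$, limits of $\mathrm{div}^{\bar\bfeta^*_N}\bar\bu^+_N$ and $(\bar\bu^+_N-\bar\bv^+_N)\cdot\bar\bn^*_N$ as $\mathrm{div}^{\bar\bfeta^*}\bar\bu_\ep$, resp.\ $(\bar\bu_\ep-\bar\bv_\ep)\cdot\bar\bn^*$. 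In each case weak lower semicontinuity of the norm gives the pointwise-in-$\bar\omega$ inequality, and Fatou's lemma applied to the $\bar\bP$-expectation upgrades it to the claimed bound; since the constants along the $N$-sequence do not depend on $\ep$, neither does the limiting constant. For item (5) I would keep the factor $\sqrt\ep$ explicit, writing $\bE^\ep\|\mathrm{div}^{\bar\bfeta^*_\ep}\bar\bu_\ep\|^2\le\liminf_N\bE\|\mathrm{div}^{\bar\bfeta^*_N}\bar\bu^+_N\|^2\le C\ep$, and similarly for the boundary term.

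For the $L^\infty$-in-time estimates (items (1)--(4) and item (2)) I would use weak-$*$ lower semicontinuity: $L^2(\bar\Omega;L^\infty(0,T;X))$ is the dual of $L^2(\bar\Omega;L^1(0,T;X'))$ for the reflexive separable spaces $X\in\{\bL^2(\sO),\bL^2(0,L),\bH^2_0(0,L)\}$, so along a further subsequence $\bar\bu_N\overset{*}{\rightharpoonup}\bar\bu_\ep$ in $L^2(\bar\Omega;L^\infty(0,T;\bL^2(\sO)))$ (and likewise for the structure variables and for $\bar\bv^*_N$), and the norm is weak-$*$ lower semicontinuous, so the uniform-in-$\ep$ bound passes to the limit; the bound on $\partial_t\bar\bfeta_\ep$ in $L^\infty(0,T;\bL^2(0,L))$ follows by applying the same argument to $\bar\bv^{\#}_N$ (or $\bar\bv_N$) and using $\partial_t\bar\bfeta_\ep=\bar\bv_\ep$. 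Finally, item (7) is a pathwise statement rather than an expectation bound: it is inherited directly from the $\bar\bP$-a.s.\ convergence $\bar\bfeta^*_N\to\bar\bfeta^*$ in $C([0,T];\bH^s(0,L))$ of \eqref{etaunif1} together with the uniform-in-$(N,\ep)$ bound $\|\bar\bfeta^*_N\|_{C([0,T];\bH^s(0,L))}\le 1/\delta_2$ built into the definition \eqref{eta*} of the cut-off.

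I expect the only genuinely delicate point to be the identification of the weak limits of the two penalized quantities in item (5) on the \emph{moving} interface $\Gamma_{\bar\bfeta^*}$: because $\bar\bv^+_N$ converges only weakly in a negative-order Sobolev space while the normal $\bar\bn^*_N$ and the Jacobians depend nonlinearly on $\bar\bfeta^*_N$, one has to pair against smooth test functions and carefully marry the strong convergence of the geometric quantities \eqref{convrest}--\eqref{etaunif} with the weak convergence of $\bar\bu^+_N$ and its trace. Everything else is a routine lower-semicontinuity-plus-Fatou argument, already carried out in the limit $N\to\infty$ in Section \ref{sec:limit}.
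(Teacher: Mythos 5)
Your proposal is correct and follows essentially the same route as the paper, which likewise obtains these bounds by transferring the $N$- and $\ep$-uniform estimates of Lemma \ref{bounds} through the equality of laws furnished by Theorem \ref{skorohod} and then invoking weak (and weak-$*$) lower semicontinuity of the norms as $N\to\infty$. The only caveat concerns item (6): the argument you give (and the available estimate $\ep\,\bE\|\partial_{zz}\bv^{\#}_N\|^2_{L^2(0,T;\bL^2(0,L))}\le C$ from Lemma \ref{bounds}(7)) yields $\ep\,\bE^\ep\|\partial_{zz}\bar\bv_\ep\|^2<C$ rather than the stated bound with the prefactor $\sqrt\ep$, which appears to be a slip in the lemma's statement rather than a gap in your reasoning.
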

	Next we have the following tightness results:
\begin{lem}[Tightness of the laws]\label{tightep}
	\begin{enumerate}
		\item The laws of $\bar\bu_\ep$ and $\bar \bv_\ep$ are tight in $L^2(0,T;\bH^\alpha(\sO))$ and $ L^2(0,T;\bH^{-{\beta}}(0,L))$ respectively for any $0\leq \alpha<1$, $0<\beta<\frac12$.
		\item The laws of $\bar \bfeta_\ep$ and that of $\bar \bfeta^*_\ep$ are tight in $C([0,T];\bH^s(0,L))$ for $\frac32<s<2$.
		\item The laws of $\|\bar \bu_\ep\|_{L^2(0,T;V)}$ are tight in $\mathbb{R}$.
		\item The laws of $\|\bar \bv^*_\ep\|_{L^2(0,T;\bL^2(0,L))}$ are tight in $\mathbb{R}$.
	\end{enumerate}
\end{lem}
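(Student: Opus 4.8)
Parts (2)--(4) I would obtain exactly as in Lemma~\ref{tightl2}. Since the bounds in Lemma~\ref{boundsep} are uniform in $\ep$, the Aubin--Lions embedding $L^\infty(0,T;\bH^2_0(0,L))\cap W^{1,\infty}(0,T;\bL^2(0,L))\subset\subset C([0,T];\bH^s(0,L))$ ($s<2$) together with Chebyshev's inequality confines $\bar\bfeta_\ep,\bar\bfeta^*_\ep$ to a fixed compact subset of $C([0,T];\bH^s(0,L))$ with probability $\ge 1-C/R^2$, and Chebyshev applied to $\bE^\ep\|\bar\bu_\ep\|^2_{L^2(0,T;V)}\le C$ and $\bE^\ep\|\bar\bv^*_\ep\|^2_{L^2(0,T;\bL^2(0,L))}\le C$ (Lemma~\ref{boundsep}(1),(2)) gives (3) and (4). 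In each case the compact set depends only on $\delta$, not on $\ep$.

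Part (1) is the substance, and I would prove it by the time-translation scheme of Lemma~\ref{tightuv}, now carried out on the continuous equation \eqref{martingale1} with $\ep$-independent constants: it suffices to show
\[
\bE^\ep\!\int_h^T\!\Big(\|\bar\bu_\ep(t-h)-\bar\bu_\ep(t)\|^2_{\bL^2(\sO)}+\|\bar\bv_\ep(t-h)-\bar\bv_\ep(t)\|^2_{\bH^{-\beta}(0,L)}\Big)\,dt\le Ch^m
\]
for some $m>0$ and $C$ independent of $\ep$, and then apply Chebyshev together with Lemma~\ref{compactLp} (with $\sY_0=\bH^1(\sO)$, $\sY=\bH^\alpha(\sO)$, $\sY_1=\bL^2(\sO)$ for the fluid, and $\bL^2(0,L),\bH^{-\beta}(0,L),\bH^{-1}(0,L)$ for the structure). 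First I would replace the fluid increment by $\|J_{\bar\bfeta^*_\ep}(t-h)\bar\bu_\ep(t-h)-J_{\bar\bfeta^*_\ep}(t)\bar\bu_\ep(t)\|^2_{\bL^2(\sO)}$ (using $J_{\bar\bfeta^*_\ep}\ge\delta_1$ and the $\ep$-uniform Hölder regularity $\bar\bfeta^*_\ep\in C^{0,1/4}(0,T;\bH^s(0,L))$, $s>3/2$, obtained by interpolating Lemma~\ref{boundsep}(4)) and the structure increment by its spectral truncation via $\|\bar\bv_\ep-P_M\bar\bv_\ep\|_{\bH^{-\beta}}\le\gamma_M^{-\beta/2}\|\bar\bv_\ep\|_{\bL^2}$; then difference \eqref{martingale1} at $t$ and $t-h$ against a transported test function $\bQ_t=(\bq_t,\bfpsi_t)$ built from $(\bar\bu_\ep(t-h),P_M\bar\bv_\ep(t-h))$ --- the continuous analogue of $\bu^{k,n}_{\sigma,M},\bv^{k,n}_{\sigma,M}$, obtained by harmonic extension, a Bogovski correction and an ALE/Piola transport onto the moving domains $\sO_{\bar\bfeta^*_\ep(s)}$ --- and sum by parts in $t$ as in \eqref{Qn}. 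The crucial requirement on $\bQ_t$ is, exactly as in \eqref{kton}, that $\text{div}^{\bar\bfeta^*_\ep}\bq_t$ and $(\bq_t-\bfpsi_t)\cdot\bn^*_\ep$ be controlled by $\text{div}^{\bar\bfeta^*_\ep}\bar\bu_\ep(t-h)$ and $(\bar\bu_\ep(t-h)-\bar\bv_\ep(t-h))\cdot\bn^*_\ep(t-h)$; since both of the latter are $O(\sqrt\ep)$ in $L^2(0,T;L^2)$ by Lemma~\ref{boundsep}(5), the singular penalty terms $\tfrac1\ep\int\text{div}^{\bar\bfeta^*_\ep}\bar\bu_\ep\,\text{div}^{\bar\bfeta^*_\ep}\bq_t$ and $\tfrac1\ep\int_\Gamma(\bar\bu_\ep-\partial_t\bar\bfeta_\ep)\cdot\bn^*_\ep\,(\bq_t-\bfpsi_t)\cdot\bn^*_\ep$ become $O(1)$ per unit time and, after Cauchy--Schwarz and the integration over $t$, $O(h)$, uniformly in $\ep$. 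The $\ep\,\partial_{zz}\partial_t\bar\bfeta_\ep$ term is absorbed by Young's inequality using $\ep^2\bE^\ep\|\partial_{zz}\bar\bv_\ep\|^2_{L^2(0,T;\bL^2)}=O(\ep^{3/2})$ (Lemma~\ref{boundsep}(6)); the convective and $\partial_tJ$ terms are handled with the $\ep$-uniform bound $\bar\bw^*_\ep\in L^\infty(0,T;\bH^{1/2}(\sO))$ from \eqref{wreg} and Lemma~\ref{boundsep}(2); the elastic and pressure terms use Lemma~\ref{boundsep}(3); and the stochastic term is controlled directly by Burkholder--Davis--Gundy and the It\^o isometry, with no discretization error present. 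Choosing $\sigma\sim h^{1/12}$ and $\gamma_M\sim h^{-1/6}$ as in Lemma~\ref{tightuv} makes all contributions $\le Ch^m$ with $C$ independent of $\ep$.

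The main obstacle, and the reason Lemma~\ref{tightuv} cannot just be quoted, is the uniform control of the $1/\ep$ penalty terms: only the quadratic bounds $\|\text{div}^{\bar\bfeta^*_\ep}\bar\bu_\ep\|^2_{L^2(0,T;L^2)}=O(\ep)$ and $\|(\bar\bu_\ep-\bar\bv_\ep)\cdot\bn^*_\ep\|^2_{L^2(0,T;L^2)}=O(\ep)$ are available, so a test function that is merely $L^2$-close to $\bar\bu_\ep(t-h)$ yields an $O(h^\gamma/\sqrt\ep)$ error. The fix is to Piola-transport $\bar\bu_\ep(t-h)$ onto the \emph{actual} domain $\sO_{\bar\bfeta^*_\ep(s)}$ at every time $s$, so that $\text{div}^{\bar\bfeta^*_\ep(s)}\bq_t$ inherits the $O(\sqrt\ep)$ smallness of $\text{div}^{\bar\bfeta^*_\ep(t-h)}\bar\bu_\ep(t-h)$ and both factors of the penalty are small simultaneously; this forces $\bQ_t$ to depend on $s$, so one must also carry $\int_{t-h}^t(J_{\bar\bfeta^*_\ep}\bar\bu_\ep,\partial_s\bq_t)\,ds$, which is harmless because $\partial_s$ of the Piola/ALE transport is governed by the ALE velocity $\bar\bw^*_\ep$, whose $L^\infty(0,T;\bH^{1/2}(\sO))$ norm --- in contrast to its $L^2(0,T;\bH^1)$ norm --- is bounded uniformly in $\ep$.
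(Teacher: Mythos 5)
Your proposal follows essentially the same route as the paper: parts (2)--(4) by Chebyshev from the $\ep$-uniform bounds, and part (1) by rerunning the time-translation argument of Lemma~\ref{tightuv} on \eqref{martingale1} with the Piola-transported test functions of \eqref{Qn}, with the key observations being that the penalty terms stay $O(1)\cdot\frac1\ep\cdot O(\ep)$ thanks to \eqref{kton} and that the only $\ep$-dependent discrete estimate, \eqref{ep2} for the $\partial_tJ$/ALE-velocity term, disappears because after \eqref{byparts}--\eqref{Jform} the limiting formulation involves $\bar\bw^*_\ep$ only undifferentiated (plus the boundary integral $\int_\Gamma S_{\bar\bfeta^*_\ep}(\bar\bv^*_\ep\cdot\bn^*_\ep)(\bar\bu_\ep\cdot\bq_\ep)$, which the paper bounds via the $\ep$-uniform $H^{1/2}(\Gamma)$ trace of $\bar\bu_\ep$ and the $L^2$ bound on $\bar\bv^*_\ep$ — the same mechanism as your $L^\infty_t\bH^{1/2}$ bound on $\bar\bw^*_\ep$). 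The paper additionally invokes the It\^o-formula variant of Lemma~5.1 in \cite{BO13} to justify testing with these random time-dependent processes, a technical point worth making explicit.
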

\begin{proof}
	We describe how to prove the first statement, which follows from the proof of Lemma \ref{tightuv} almost identically. Construction of suitable test functions $(\bq_\ep,\bfpsi_\ep) $ is the same as \eqref{Qn} and we apply the variant of It\^o's formula stated in 
	Lemma 5.1 in \cite{BO13}, which justifies testing \eqref{martingale1} with the continuous-in-time versions of the random test functions \eqref{Qn} (i.e. where $(\Delta t)\sum_{n-j+1}^n$ is replaced by $\int_{t-h}^t dt$). Recall that all the bounds obtained in the proof of Lemma \ref{tightuv}, except in \eqref{ep2}, do not depend on $\ep$. However, due to integrating by parts in \eqref{byparts} and applying \eqref{Jform}, the weak formulation \eqref{martingale1} now contains the boundary integral $\int_0^t\int_\Gamma S_{\bar\bfeta_\ep^*}(\bar\bv_\ep^*\cdot\bn_\ep^*)(\bar\bu_\ep\cdot\bq_{\ep})$ instead of  the aforementioned term involving the derivatives of $\bar\bw_\ep^*$. Then, for the process $\bq_\ep$ taking values in $\bH^1(\sO)$, described above and constructed as in \eqref{Qn}, we can bound this boundary integral independently of $\ep$, by using the fact that $\|\bn^*_\ep\|_{\bL^\infty((0,T)\times(0,L))}<C(\delta)$ together with the bounds for the trace $\bar\bu_\ep|_{\Gamma}$ in $L^2(\hat\Omega;L^2(0,T;\bH^\frac12(\Gamma)))$ (see Theorem 1.5.2.1 in \cite{G11}) and that for $\bar\bv^*_\ep$ in $L^2(\hat\Omega;L^2(0,T;\bL^2(0,L)))$ which are independent of $\ep$.
\end{proof}
	Now for an infinite denumerable set of indices $\Lambda$, we denote by 
	$\mu_{\ep}$ the joint law of the random variable $\bar{\mathcal{U}}_{\ep}:=(
	\bar{\bu}_{\ep},\bar \bv_{\ep},\bar\bfeta_{\ep}, \bar\bfeta^*_{\ep},
	\|\bar \bu_\ep\|_{L^2(0,T;V)},\|\bar \bv^*_\ep\|_{L^2(0,T;\bL^2(0,L))},
	\bar{W}_\ep)$
	taking values in the phase space
	\begin{align*}
		\sS&:=L^2(0,T;\bH^{\frac34}(\sO))\times L^2(0,T; \bH^{-\beta}(0,L)) \times[ C([0,T],\bH^s(0,L))]^2
		\times \mathbb{R}^2\times
		C([0,T];U),
	\end{align*}
	for some $0<\beta<\frac12$, $\frac32<s<2$.
	
	Then the tightness of the laws $\mu_\ep$ on $\sS$ and an application of the Prohorov theorem and the almost sure representation in \cite{VW96} give us the following result.
	\begin{thm}\label{skorohod2} There exists a probability space $(\hat\Omega,\hat\sF,\hat\bP)$ and random variables \\ $\hat{\mathcal{U}}_{\ep}=(
		\hat{\bu}_{\ep},\hat \bv_{\ep},\hat\bfeta_{\ep}, \hat\bfeta^*_{\ep}, m_\ep, k_\ep,
		\hat{W}_\ep)$ and
		$\hat{\mathcal{U}}=(
		\hat{\bu},\hat \bv,\hat\bfeta, \hat\bfeta^*,{m},{k},
		\hat{W})$
		such that
		\begin{enumerate}
			\item $\hat{\mathcal{U}}_{\ep}=^d\bar{\mathcal{U}}_{\ep}$ for every $\ep \in \Lambda$.
			\item $\hat{\mathcal{U}}_{\ep} \rightarrow \hat{\mathcal{U}}$ $\hat\bP$-a.s. in the topology of $\sS$ as $\ep\rightarrow 0$.
			\item $\partial_t\hat\bfeta=\hat \bv$ and $\partial_t\hat\bfeta^*=\hat \bv^*$, in the sense of distributions, almost surely.
		\end{enumerate}
	\end{thm}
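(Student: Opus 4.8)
The plan is to prove Theorem \ref{skorohod2} by the standard combination of a tightness argument, Prohorov's theorem, and a Skorohod representation theorem; the only genuinely new point, and the one I expect to require the most care, is the recovery of the limiting artificial velocity $\hat\bv^*$, which enters the approximating tuple only through its norm. First I would assemble tightness of the joint law $\mu_\ep$ of $\bar{\mathcal{U}}_\ep=(\bar\bu_\ep,\bar\bv_\ep,\bar\bfeta_\ep,\bar\bfeta^*_\ep,\|\bar\bu_\ep\|_{L^2(0,T;V)},\|\bar\bv^*_\ep\|_{L^2(0,T;\bL^2(0,L))},\bar W_\ep)$ on the Polish space $\sS$. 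Lemma \ref{tightep} already gives tightness of the laws of the first six components in, respectively, $L^2(0,T;\bH^{3/4}(\sO))$, $L^2(0,T;\bH^{-\beta}(0,L))$, $C([0,T];\bH^s(0,L))$ (for both $\bar\bfeta_\ep$ and $\bar\bfeta^*_\ep$), $\R$ and $\R$; since $\bar W_\ep=W$ for every $\ep$, the seventh marginal is a constant, hence trivially tight, sequence on $C([0,T];U)$. By Tychonoff's theorem $\mu_\ep$ is then tight on $\sS$, exactly as in the passage $N\to\infty$. Prohorov's theorem yields a subsequence in $\Lambda$ (which I relabel) along which $\mu_\ep$ converges weakly, and the version of the Skorohod representation theorem in Theorem 1.10.4 of \cite{VW96} produces the probability space $(\hat\Omega,\hat\sF,\hat\bP)$ together with random variables $\hat{\mathcal{U}}_\ep=(\hat\bu_\ep,\hat\bv_\ep,\hat\bfeta_\ep,\hat\bfeta^*_\ep,m_\ep,k_\ep,\hat W_\ep)$ and $\hat{\mathcal{U}}=(\hat\bu,\hat\bv,\hat\bfeta,\hat\bfeta^*,m,k,\hat W)$ with $\hat{\mathcal{U}}_\ep\stackrel{d}{=}\bar{\mathcal{U}}_\ep$ for every $\ep\in\Lambda$ and $\hat{\mathcal{U}}_\ep\to\hat{\mathcal{U}}$ $\hat\bP$-a.s.\ in $\sS$. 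This gives assertions (1) and (2).

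To obtain (3), I would use that on the original stochastic bases the $\ep$-level displacements satisfy $\partial_t\bar\bfeta_\ep=\bar\bv_\ep$ and $\partial_t\bar\bfeta^*_\ep=\bar\bv^*_\ep$ in the distributional sense (cf.\ \eqref{etaderi} and the construction leading to Theorem \ref{exist1}); equivalently, $\bP^\ep$-a.s.\ one has, for all $\bfphi\in C_0^\infty((0,T)\times(0,L);\R^2)$,
\[
-\int_0^T\!\!\int_0^L\bar\bfeta_\ep\cdot\partial_t\bfphi\,dz\,dt=\int_0^T\!\!\int_0^L\bar\bv_\ep\cdot\bfphi\,dz\,dt ,
\]
and, with $\bar\bv^*_\ep=\partial_t\bar\bfeta^*_\ep$, that $\|\partial_t\bar\bfeta^*_\ep\|_{L^2(0,T;\bL^2(0,L))}$ coincides with the scalar coordinate $\|\bar\bv^*_\ep\|_{L^2(0,T;\bL^2(0,L))}$ of $\bar{\mathcal{U}}_\ep$. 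Testing against a countable dense family of such $\bfphi$, the set of pairs $(\bfeta,\bv)$ verifying the first relation and the set of pairs $(\bfeta^*,r)$ with $\partial_t\bfeta^*\in L^2(0,T;\bL^2(0,L))$ and $\|\partial_t\bfeta^*\|_{L^2(0,T;\bL^2(0,L))}\le r$ are Borel subsets of the corresponding coordinate spaces, so equality in law transfers these properties to $(\hat\bfeta_\ep,\hat\bv_\ep)$ and to $(\hat\bfeta^*_\ep,k_\ep)$, $\hat\bP$-a.s. Letting $\ep\to0$ and using the $\hat\bP$-a.s.\ convergences $\hat\bfeta_\ep\to\hat\bfeta$, $\hat\bfeta^*_\ep\to\hat\bfeta^*$ in $C([0,T];\bH^s(0,L))$ and $\hat\bv_\ep\to\hat\bv$ in $L^2(0,T;\bH^{-\beta}(0,L))$, I would pass to the limit in the first identity to get $\partial_t\hat\bfeta=\hat\bv$ in $\mathcal{D}'((0,T)\times(0,L))$, $\hat\bP$-a.s. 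For the starred variable, $k_\ep\to k$ a.s.\ gives a uniform-in-$\ep$ a.s.\ bound on $\|\partial_t\hat\bfeta^*_\ep\|_{L^2(0,T;\bL^2(0,L))}$, which together with $\partial_t\hat\bfeta^*_\ep\to\partial_t\hat\bfeta^*$ in $\mathcal{D}'$ and lower semicontinuity of the $L^2$-norm yields $\partial_t\hat\bfeta^*\in L^2(0,T;\bL^2(0,L))$ a.s.; one then sets $\hat\bv^*:=\partial_t\hat\bfeta^*$, which is moreover the weak $L^2$-limit of $\partial_t\hat\bfeta^*_\ep$ along a further subsequence.

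The delicate point is precisely this treatment of $\hat\bv^*$: since $\bar\bv^*_\ep$ appears in $\bar{\mathcal{U}}_\ep$ only through its norm $k_\ep$ — it carries no $\ep$-uniform bound in any function space that survives the limit $\ep\to0$ — it cannot be relabelled on $\hat\Omega$ the way the other components are, and $\hat\bv^*$ must be reconstructed intrinsically as the distributional time derivative of $\hat\bfeta^*$, with its square-integrability read off from the convergence of the scalar $k_\ep$, after checking that the relations involved define Borel events so that equality in law is enough to transport them. Everything else reduces to the routine bookkeeping over the finitely many coordinates of $\hat{\mathcal{U}}_\ep$ that was already performed in the passage $N\to\infty$.
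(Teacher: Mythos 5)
Your proof of parts (1)--(2) is the paper's argument verbatim: marginal tightness from Lemma \ref{tightep} plus the constant law of the Wiener process, Tychonoff for joint tightness on $\sS$, Prohorov, and the Skorohod representation of Theorem 1.10.4 in \cite{VW96}. Where you diverge is in the recovery of $\hat\bv^*$ and part (3). The paper's reason for invoking that particular variant of Skorohod's theorem is exactly the feature you set aside: it supplies measurable maps $\phi_\ep:\hat\Omega\to\Omega^\ep$ with $\hat{\mathcal U}_\ep=\bar{\mathcal U}_\ep\circ\phi_\ep$ and $\hat\bP\circ\phi_\ep^{-1}=\bP^\ep$ (see \eqref{newrv2}), so one may simply \emph{define} $\hat\bv^*_\ep:=\bar\bv^*_\ep\circ\phi_\ep$ on the new space even though $\bar\bv^*_\ep$ is not a coordinate of the tuple --- just as the paper does for $\bar\bu_N,\bar\bv_N,\dots$ after Theorem \ref{skorohod}. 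Then $k_\ep=\|\hat\bv^*_\ep\|_{L^2(0,T;\bL^2(0,L))}$ exactly, the a.s.\ convergence of $k_\ep$ gives an a.s.\ uniform bound, and \eqref{vweak2} follows by weak compactness. Your claim that $\bar\bv^*_\ep$ ``cannot be relabelled on $\hat\Omega$'' is therefore not accurate for this version of the theorem; but your workaround --- transporting the Borel relations $\partial_t\bfeta=\bv$ and $\|\partial_t\bfeta^*\|_{L^2}\le r$ via equality in law, then identifying $\hat\bv^*$ intrinsically as $\partial_t\hat\bfeta^*$ with square-integrability read off from $k$ --- is correct and has the virtue of working even with the classical Skorohod theorem, at the cost of the measurability bookkeeping you flag. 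Either route establishes the statement; the paper's is shorter because the explicit maps do the relabelling for free.
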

	
	We recall again that Theorem 1.10.4 in \cite{VW96} tells us that the random variables $\hat{\mathcal{U}}_\ep$ can be chosen such that for every $\ep \in \Lambda$,
	\begin{align}\label{newrv2}
		\hat{\mathcal{U}}_\ep(\omega)=\bar{\mathcal{U}}_\ep(\phi_\ep(\omega)), \quad \omega \in \hat\Omega,
	\end{align} and $\hat\bP\circ\phi_\ep^{-1}=\bP^\ep$, where $\phi_\ep:\hat\Omega\rightarrow \Omega^\ep$ is measurable. 
	
	Thanks to these explicit maps we identify the real-valued random variables $m_\ep$ as $m_\ep= \|\hat \bu_\ep\|_{L^2(0,T;V)}$ and notice that $m_\ep$ converge almost surely due to  Theorem~\ref{skorohod2}.}
Hence as in Theorem \ref{skorohod}, we obtain, up to a subsequence, that
	\begin{align}\label{uweak2}
	\hat \bu_\ep \rightharpoonup \hat\bu \quad\text{ weakly in } {L^2(0,T;V)} \quad \hat\bP-a.s.
	\end{align}
Similarly,
\begin{align}\label{vweak2}
\hat \bv^*_\ep \rightharpoonup \hat \bv^* \quad\text{ weakly in } {L^2(0,T;\bL^2(0,L))} \quad \hat\bP-a.s.
\end{align}
As in the previous section, we also have that
	\begin{align}\label{etaunif2}
	\hat{\bfeta}_{\ep} \rightarrow \hat{\bfeta} \text{ and } \hat{\bfeta}^*_{\ep} \rightarrow \hat{\bfeta}^*\,\,\, \text { in } L^\infty(0,T;\bC^1[0,L])\, a.s.,
\end{align}
and that,
\begin{equation}\label{convrest1}
\begin{split}
	&  A_{\hat\bfeta^*_\ep} \rightarrow  A_{\hat\bfeta^*}, ( A_{\hat\bfeta^*_\ep})^{-1} \rightarrow ( A_{\hat\bfeta^*})^{-1}  \text{ in } L^\infty(0,T;\bW^{2,p}(\sO))\, \text{ for any $p<4$}\, \, \, a.s.,\\
	&			 J_{\hat\bfeta^*_\ep} \rightarrow  J_{\hat\bfeta^*} =\text{det}(\nabla A_{\hat\bfeta^*})\quad \text{ in } L^\infty(0,T;C(\bar\sO))\, 
	 \quad \, \hat\bP-a.s,
	\\	& S_{\hat\bfeta_\ep} \rightarrow  S_{\hat\bfeta}\quad \text{ in } L^\infty(0,T;C(\bar\Gamma)) \, \quad \, \hat\bP-a.s,\\
	&	\hat\bw^*_\ep \rightharpoonup \hat\bw^*\, \quad \text{ weakly in } L^2(0,T;\bH^{\frac12}(\sO)),\quad \, \hat\bP-a.s.\\
		&\hat	\bn^*_\ep \to \hat\bn^*\, \quad \text{  in } L^\infty(0,T;C(\bar\Gamma)),\quad \, \hat\bP-a.s.
\end{split}
\end{equation}
Due to the lack of the equivalent of Lemma \ref{uvtightC}, we have one more obstacle to deal with. Namely,
that the candidate solution for fluid velocity, $\hat\bu$, does not have the desired temporal regularity to be a stochastic process in the classical sense. Hence, we construct an appropriate filtration as follows: first
define the $\sigma-$fields
$$
\sigma_t(\hat\bu):=\bigcap_{s\geq t}\sigma\left(\bigcup_{\bQ\in C^\infty_0((0,s);\sD)}\{(\hat\bu,\bq)<1\}\cup \mathcal{N} \right),\quad \mathcal{N}=\{\mathcal{A}\in \hat{\mathcal{F}} \ |  \hat\bP(\mathcal{A})=0\}.
$$
Let $\hat\sF_t'$ be the $\sigma-$ field generated by the random variables $\hat\bfeta(s),\hat{W}(s)$ for all $0\leq s \leq t$.
Then we define the history of the random distributions $\hat\bu$, $\hat{\mathcal{F}}_t$, as follows
\begin{align}\label{Ft1}
\hat{\mathcal{F}}^0_t:=\bigcap_{s\ge t}\sigma(\hat{\sF}_s' \cup \mathcal{N}),\qquad
\hat{\mathcal{F}}_t :=\sigma(\sigma_t(\hat\bu)\cup \hat{\mathcal{F}}^0_t).
\end{align}
This gives a complete, right-continuous filtration $(\hat{\mathcal{F}}_t)_{t \geq 0}$, on the probability space $(\hat\Omega,\hat{\mathcal{F}},\hat\bP)$, to which the noise and the candidate solution $\hat\bu$ are adapted. Now we state the following result from \cite{BFH18}.
\begin{lem}\label{representative}
There exists a stochastic process taking values in $
L^2(0,T;\bL^2(\sO))$ a.s. which is an $(\hat\sF_t)_{t\geq 0}$-progressively measurable representative of $\hat\bu$.
\end{lem}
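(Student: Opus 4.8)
The plan is to construct the representative by causal temporal mollification, following the abstract theory of random distributions in \cite{BFH18}. First I would record the integrability of $\hat\bu$: by Lemma \ref{boundsep}(1), weak lower semicontinuity of the norm under the convergence of Theorem \ref{skorohod2} together with \eqref{uweak2}, and Fatou's lemma, one has $\hat\bE\,\|\hat\bu\|_{L^2(0,T;\bL^2(\sO))}^2<\infty$; in particular $\hat\bu\in L^2(0,T;\bL^2(\sO))$ $\hat\bP$-a.s. Thus $\hat\bu$ is a random variable with values in $L^2(0,T;\bL^2(\sO))$, but is a priori only defined up to modification on $dt$-null sets; the content of the lemma is to select a $t$-pointwise, $(\hat\sF_t)_{t\geq0}$-progressively measurable version.

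Fix a mollifier $\kappa\in C^\infty_0((-1,0))$ with $\int\kappa=1$ and set $\kappa_\vartheta(\tau)=\vartheta^{-1}\kappa(\tau/\vartheta)$, supported in $(-\vartheta,0)$. For $\vartheta>0$ define $[\hat\bu]_\vartheta(t):=\int_{0}^{T}\kappa_\vartheta(t-s)\,\hat\bu(s)\,ds\in\bL^2(\sO)$. Since $\kappa_\vartheta(t-\cdot)$ is supported in $(t-\vartheta,t)\subset(0,t)$, for each $t$ and each $\bq_0\in V$ the scalar $\big([\hat\bu]_\vartheta(t),\bq_0\big)=\big(\hat\bu,\kappa_\vartheta(t-\cdot)\,\bq_0\big)$ is obtained from the pairings $(\hat\bu,\bq)$, $\bq\in C^\infty_0((0,t);\sD)$, by approximating $\kappa_\vartheta(t-\cdot)\bq_0$ in $L^2(0,T;V)$, on which $\hat\bu$ acts continuously; hence it is $\sigma_t(\hat\bu)$-measurable. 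By density of $V$ in $\bL^2(\sO)$, $[\hat\bu]_\vartheta(t)$ is a $\sigma_t(\hat\bu)\subset\hat\sF_t$-measurable $\bL^2(\sO)$-valued random variable, i.e. $[\hat\bu]_\vartheta$ is $(\hat\sF_t)_{t\geq0}$-adapted. Moreover $t\mapsto[\hat\bu]_\vartheta(t)$ is $\hat\bP$-a.s. continuous with values in $\bL^2(\sO)$, so each $[\hat\bu]_\vartheta$ is $(\hat\sF_t)_{t\geq0}$-progressively measurable.

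To conclude, pass $\vartheta\to0$: by the $L^p$-continuity of mollification, $[\hat\bu]_{1/n}\to\hat\bu$ in $L^2(0,T;\bL^2(\sO))$ $\hat\bP$-a.s. Set $\tilde\bu:=\lim_{n\to\infty}[\hat\bu]_{1/n}$ on the (measurable) event where this limit exists in $L^2(0,T;\bL^2(\sO))$, and $\tilde\bu:=0$ otherwise; the exceptional event is $\hat\bP$-null and hence belongs to every $\hat\sF_t$ by the inclusion of $\mathcal{N}$ in \eqref{Ft1}. As a countable $(t,\omega)$-pointwise limit of progressively measurable processes, $\tilde\bu$ is $(\hat\sF_t)_{t\geq0}$-progressively measurable, takes values in $L^2(0,T;\bL^2(\sO))$ a.s., and coincides with $\hat\bu$ in $L^2(0,T;\bL^2(\sO))$ $\hat\bP$-a.s. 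The main obstacle is the measurability step: rigorously justifying that the mollified values $[\hat\bu]_\vartheta(t)$ are measurable with respect to $\sigma_t(\hat\bu)$, i.e. the history generated by pairings of the random distribution $\hat\bu$ with test functions supported in time strictly before $t$; this requires treating $\hat\bu$ as a random distribution rather than a classical process and approximating the time integral carefully, and is precisely where one leans on the framework of \cite{BFH18}.
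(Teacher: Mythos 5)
The paper gives no proof of this lemma at all; it simply invokes the result from \cite{BFH18} (the ``random distributions'' framework). Your proposal reconstructs essentially the proof given there: a one-sided (causal) temporal mollification to produce adapted, continuous, hence progressively measurable approximations, followed by a pointwise passage to the limit. That is the right argument, and your identification of the key measurability step --- that $([\hat\bu]_\vartheta(t),\bq_0)$ is a limit of pairings $(\hat\bu,\bq)$ with $\bq$ compactly supported in time in $(0,t)$, hence $\sigma_t(\hat\bu)$-measurable modulo the null sets already adjoined in \eqref{Ft1} --- is exactly where the content lies.

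There is, however, a sign error that, taken literally, destroys the causality your argument depends on. With $\kappa$ supported in $(-1,0)$ and $\kappa_\vartheta(\tau)=\vartheta^{-1}\kappa(\tau/\vartheta)$ supported in $(-\vartheta,0)$, the kernel $s\mapsto\kappa_\vartheta(t-s)$ is nonzero precisely for $t<s<t+\vartheta$, so $[\hat\bu]_\vartheta(t)$ as you define it averages $\hat\bu$ over the \emph{future} window $(t,t+\vartheta)$ and is not $\sigma_t(\hat\bu)$-measurable; your assertion that $\kappa_\vartheta(t-\cdot)$ is supported in $(t-\vartheta,t)$ requires instead $\operatorname{supp}\kappa\subset(0,1)$ (equivalently, keep $\kappa$ as is but convolve with $\kappa_\vartheta(s-t)$). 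Once the orientation is fixed the rest goes through. Two smaller points worth tightening: (i) for $t<\vartheta$ the window is truncated by $s>0$, which does not affect adaptedness and still yields $[\hat\bu]_{1/n}\to\hat\bu$ in $L^2(0,T;\bL^2(\sO))$ a.s.; (ii) in the last step you should define $\tilde\bu(t,\omega)$ as the $(t,\omega)$-pointwise limit of $[\hat\bu]_{1/n}(t,\omega)$ on the progressively measurable set where that limit exists (it does at every Lebesgue point of $\hat\bu(\cdot,\omega)$, hence for a.e.\ $(t,\omega)$), rather than as the $L^2(0,T;\bL^2(\sO))$-limit, since only the former produces a genuinely pointwise-defined process whose progressive measurability follows from that of the approximants.
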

	
		\begin{thm}\label{exist2}
		For any fixed $\delta=(\delta_1,\delta_2)$ that satisfies \eqref{delta}, the randome variables $(\hat \bu,\hat {\bfeta},\hat{\bfeta}^*)$ constructed in Theorem~\ref{skorohod2}
		satisfy the following
		\begin{equation}\begin{split}\label{martingale2}
				&(	{J}_{\hat\bfeta^*}(t)\hat\bu (t),\bq(t))+(\partial_t\hat{ {\bfeta}}(t),\bfpsi(t))
				=((J_0)\bu_0,\bq(0))+( \bv_0,\bfpsi(0))\\
				&+\int_0^t\int_\sO{ J}_{\hat\bfeta^*}\hat\bu\cdot \partial_t{\bq} +\int_0^t\int_0^L \partial_t\hat {\bfeta} \,\partial_t{\bfpsi}-\int_0^t \langle \sL_e(\hat{\bfeta}),\bfpsi \rangle \\
				&-\frac12\int_0^t\int_{\sO}J_{\hat\bfeta^*}(\hat\bu\cdot\nabla^{\hat\bfeta ^*}\hat\bu \cdot\bq
			- (\hat\bu^{}-2\hat\bw^*)\cdot\nabla^{\hat\bfeta ^*}\bq\cdot\hat\bu ) 		+\frac12\int_0^t\int_\Gamma S_{\hat\bfeta^*}(\hat\bv^*\cdot\hat\bn^*)(\hat\bu\cdot\bq)\\
				&-2\nu\int_0^t\int_{\sO}{J}_{\hat\bfeta^*}\, \bD^{\hat{\bfeta} ^*}(\hat\bu )\cdot \bD^{\hat{\bfeta} ^*}(\bq) -\frac1\alpha\int_0^t\int_\Gamma S_{\hat\bfeta} (\hat\bu-\partial_t\hat{\bfeta})\cdot\tau^{\hat\bfeta}(\bq-\bfpsi)\cdot\tau^{\hat\bfeta}  \\
				&+ 
				\int_0^t\left( P_{{in}}\int_{0}^1q_z\Big|_{z=0}dr-P_{{out}}\int_{0}^1q_z\Big|_{z=1}dr\right)
				ds +\int_0^t
				(	G(\hat\bu ,\hat{\bfeta}^*)d\hat W , \bQ)  ,
			\end{split}
		\end{equation}
		$\hat\bP$-a.s. for almost every $t\in[0,T]$ and for any $(\hat\sF_t)_{t \geq 0}-$adapted process $\bQ=(\bq,\bfpsi)$ with $C^1$- paths in $\sD$ such that $\nabla^{\hat{\bfeta}^*}\cdot\bq=0$ and $\bq|_{\Gamma}\cdot\bn^{\hat\bfeta^*}=\bfpsi\cdot\bn^{\hat\bfeta^*}$ a.s. Moreover, $\nabla^{\hat{\bfeta}^*}\cdot\hat\bu=0$.
	\end{thm}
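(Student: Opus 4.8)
\medskip
\noindent\textbf{Plan of proof.} The plan is to pass to the limit $\ep\to 0$ in the weak formulation \eqref{martingale1}, and two preliminary steps are needed before doing so. \emph{First}, exactly as in the proof of Lemma \ref{tightep} and via the It\^o-type product formula of \cite{BO13} (which is what allows testing \eqref{martingale1} with continuous-in-time adapted processes despite the low temporal regularity and the geometric nonlinearity), I would rewrite the $\ep$-formulation in a form valid for \emph{time-dependent} adapted test processes $\bQ=(\bq,\bfpsi)$ with $C^1$ paths in $\sD$. Transported to the probability space of Theorem \ref{skorohod2} through the maps of \eqref{newrv2}, relative to $\hat W_\ep$ and a filtration $(\hat\sF^\ep_t)_{t\ge 0}$, this formulation now contains the additional terms $\int_0^t\int_\sO J_{\hat\bfeta^*_\ep}\hat\bu_\ep\cdot\partial_t\bq$ and $\int_0^t\int_0^L\partial_t\hat\bfeta_\ep\,\partial_t\bfpsi$, together with the penalty terms $\frac1\ep\int_0^t\int_\sO\text{div}^{\hat\bfeta^*_\ep}\hat\bu_\ep\,\text{div}^{\hat\bfeta^*_\ep}\bq$ and $\frac1\ep\int_0^t\int_\Gamma(\hat\bu_\ep-\partial_t\hat\bfeta_\ep)\cdot\hat\bn^*_\ep\,(\bq-\bfpsi)\cdot\hat\bn^*_\ep$ and the regularizing term $\ep\int_0^t\int_0^L\partial_{zz}\partial_t\hat\bfeta_\ep\,\partial_{zz}\bfpsi$. \emph{Second}, given a limiting test process $\bQ=(\bq,\bfpsi)$ — $(\hat\sF_t)$-adapted, $C^1$ in time, with $\nabla^{\hat\bfeta^*}\cdot\bq=0$ and $\bq|_\Gamma\cdot\bn^{\hat\bfeta^*}=\bfpsi\cdot\bn^{\hat\bfeta^*}$ — I would construct $\ep$-test processes $\bQ_\ep=(\bq_\ep,\bfpsi_\ep)$ that are divergence-free and coupling-compatible on the approximate domains $A_{\hat\bfeta^*_\ep}(\sO)$ and converge to $\bQ$.

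This second step is the technical heart of the argument, and where I expect the main obstacle to lie. The idea is to take $\bq_\ep$ to be the Piola transform of $\bq$ through the ALE maps, $\bq_\ep=(J_{\hat\bfeta^*_\ep})^{-1}\nabla A_{\hat\bfeta^*_\ep}\big(J_{\hat\bfeta^*}(\nabla A_{\hat\bfeta^*})^{-1}\bq\big)$, corrected near $\Gamma$ by a Bogovski term (cf.\ \cite{Galdi}) and a finite-dimensional term proportional to $\xi_0\chi$ exactly as in \eqref{Qn}, and to set $\bfpsi_\ep:=\bfpsi$ minus the matching scalar correction; by Theorem 1.7-1 of \cite{C88} this construction preserves \emph{exactly} both $\text{div}^{\hat\bfeta^*_\ep}\bq_\ep=0$ and $(\bq_\ep-\bfpsi_\ep)\cdot\hat\bn^*_\ep=0$. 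Since the map is a Carath\'eodory function of $(\omega,t)$ — continuous in $(\bq(\omega,t),\hat\bfeta^*_\ep(\omega,t))$ and $(\hat\sF_t)$-measurable — $\bQ_\ep$ is again adapted with $C^1$ (Lipschitz, after a harmless time-mollification of $\hat\bfeta^*_\ep$) paths in $\sD$; and the uniform bounds of Lemma \ref{boundsep}, together with \eqref{etaunif2} and \eqref{convrest1} and the density of smooth functions, should yield $\bQ_\ep\to\bQ$ and $\partial_t\bQ_\ep\to\partial_t\bQ$ strongly in $C([0,T];V\times\bH^2_0(0,L))$, $\hat\bP$-a.s. I emphasize that the tangential jump permitted by the Navier--slip condition is accommodated precisely because $\bq$ and $\bfpsi$ need only agree in the normal direction at $\Gamma$; this is the new feature compared with the no-slip constructions of \cite{TC23,T23}.

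With $\bQ_\ep$ in hand, I would insert it into the $\ep$-formulation from the first step: the two penalty terms vanish identically by construction, and the regularizing term is $O(\ep)$ — after integrating by parts in time it is bounded by a constant times $\ep\|\partial_{zz}\hat\bfeta_\ep\|_{L^\infty(0,T;\bL^2(0,L))}(\|\partial_{zz}\bfpsi_\ep\|_{L^\infty}+\|\partial_{zz}\partial_t\bfpsi_\ep\|_{L^1})$, which tends to $0$ thanks to Lemma \ref{boundsep}(3). The remaining deterministic terms pass to the limit using $\hat\bu_\ep\to\hat\bu$ strongly in $L^2(0,T;\bL^2(\sO))$ and weakly in $L^2(0,T;V)$ by \eqref{uweak2}, the strong convergence of $\partial_t\hat\bfeta_\ep$ in $L^2(0,T;\bH^{-\beta}(0,L))$, the weak convergences $\hat\bv^*_\ep\rightharpoonup\hat\bv^*$ and $\hat\bw^*_\ep\rightharpoonup\hat\bw^*$ from \eqref{vweak2}--\eqref{convrest1}, and the strong convergences of the ALE maps, Jacobians and normals in \eqref{convrest1}--\eqref{etaunif2}; the convective term is treated, just as in the passage $N\to\infty$, by first integrating by parts as in \eqref{byparts} and then using the identity \eqref{Jform}. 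The stochastic integral $\int_0^t(G(\hat\bu_\ep,\hat\bfeta^*_\ep)\,d\hat W_\ep,\bQ_\ep)$ converges to $\int_0^t(G(\hat\bu,\hat\bfeta^*)\,d\hat W,\bQ)$ by the standard stochastic-integral convergence lemma (see \cite{BFH18}), using the a.s.\ convergences above and Assumption \ref{G}. Finally, $\nabla^{\hat\bfeta^*}\cdot\hat\bu=0$ follows by combining Lemma \ref{boundsep}(5), which forces $\text{div}^{\hat\bfeta^*_\ep}\hat\bu_\ep\to 0$ in $L^2(\hat\Omega\times(0,T)\times\sO)$, with the weak convergence $\text{div}^{\hat\bfeta^*_\ep}\hat\bu_\ep\rightharpoonup\text{div}^{\hat\bfeta^*}\hat\bu$ obtained from \eqref{uweak2} and \eqref{convrest1}; and progressive measurability of $(J_{\hat\bfeta^*}\hat\bu,\partial_t\hat\bfeta)$ with respect to the filtration \eqref{Ft1} is inherited from Lemma \ref{representative} and the adaptedness of $\hat\bfeta$ and $\hat W$.
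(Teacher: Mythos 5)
Your proposal follows essentially the same route as the paper: the It\^o-type formula of \cite{BO13} to admit time-dependent adapted test processes in \eqref{martingale1}, the Piola-transform construction of $(\bq_\ep,\bfpsi_\ep)$ with a Bogovski corrector and a scalar correction along $\xi_0\chi$ so that $\nabla^{\hat\bfeta^*_\ep}\cdot\bq_\ep=0$ and $(\bq_\ep-\bfpsi_\ep)\cdot\hat\bn^*_\ep=0$ hold exactly, the Carath\'eodory/adaptedness argument, the vanishing of the penalty and regularization terms, and the Vitali-type passage to the limit in the stochastic integral.

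One claim should be corrected, though it does not break the argument. You assert $\partial_t\bQ_\ep\to\partial_t\bQ$ \emph{strongly} in $C([0,T];V\times\bH^2_0(0,L))$. This cannot hold: differentiating the Piola transform in time produces the ALE velocity $\hat\bw^*_\ep$ (equivalently $\hat\bv^*_\ep$), which converges only \emph{weakly} (see \eqref{vweak2} and \eqref{convrest1}), so at best one obtains $\partial_t\bq_\ep\rightharpoonup\partial_t\bq$ weakly in $L^2(0,T;\bH^{-\alpha}(\sO))$ for $\tfrac12\le\alpha\le1$, as in \eqref{convqt}. This weaker mode still suffices for the term $\int_0^t\int_\sO J_{\hat\bfeta^*_\ep}\hat\bu_\ep\cdot\partial_t\bq_\ep$, because $\hat\bu_\ep\to\hat\bu$ strongly in $L^2(0,T;\bH^{3/4}(\sO))$ by Theorem \ref{skorohod2}, so the weak--strong pairing converges; but the strong $C^1$-convergence you invoke is not available and should be replaced by exactly this weak--strong duality argument.
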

	
	\begin{proof}[Proof of Theorem \ref{exist2}]
First we must construct $\sD$-valued test processes $(\bq_{\ep},\bfpsi_{\ep})$, satisfying the kinematic coupling condition and such that $\bq_\ep$ satisfies the transformed divergence-free condition. This is required so that the two penalty terms in the approximate weak formulation drop out. 
		
		We first construct an appropriate test functions for the limiting equation \eqref{martingale2} as follows: Recall that the maximal domain $\sO_\delta=(0,L)\times(0,R_\delta)$ is a rectangular domain comprising of all the moving domains $\sO_{\hat\bfeta_\ep^*}$. Consider a smooth $(\hat\sF_t)_{t\geq0}$-adapted process $\bg=(g_z,g_r)$ on $\bar\sO_{\delta}$ such that $\nabla\cdot \bg=0$ and such that $\bg$ satisfies the required boundary conditions  $g_r=0 \text{ on } z=0,L, r=0$  and $\partial_r g_z=0 \text{ on }\Gamma_{b}$. Assume also that, on the top lateral boundary of the moving domain associated with $\hat{\bfeta}^*$, $\Gamma_{\hat{\bfeta}^*}$, the function $\bg$ satisfies
		$\bg(t)|_{\Gamma_{\hat{\bfeta}^*(t)}}\cdot\hat\bn^*(t)=\bfpsi(t)\cdot\hat\bn^*(t)$ for some smooth $(\hat\sF_t)_{t\geq0}$-adapted process $\bfpsi=(\psi_z,\psi_r)$.
	We define
		$$ \bq(t,z,r,\omega) = \bg(t,\omega) \circ\, A^\omega_{\hat{\bfeta}^*}(t)(z,r) .$$	
		To observe that $\bq$ is a suitable test function we consider, for any $t\in[0,T]$ and given process $\bg$, the map $\sC_\bg:\hat\Omega\times \bC([0,L]) \rightarrow \bC^1(\bar\sO)$, $$\sC_{\bg}(\omega,{\bfeta})
		=F_{\bfeta}(\bg(t,\omega)),$$
		where $F_{\bfeta}({\bf f}):={\bf f}\circ A^\omega_{{\bfeta}}$ is a well-defined map from $\bC(\bar\sO_{{\bfeta}})$ to $\bC(\bar\sO)$ for  any ${\bfeta}\in \bC([0,L])$. Due to the continuity of the composition operator $F_{\bfeta}$, the assumption that $\bg(t)$ is $\hat\sF_t-$measurable, implies for any ${\bfeta}$, that the $\bC^1(\bar\sO)$-valued map $\omega \mapsto \sC_\bg(\omega,{\bfeta})$ is $\hat\sF_t$-measurable (where $\bC^1(\bar\sO)$ is endowed with Borel $\sigma$-algebra).
		Note also that for a fixed $\omega$, the map  ${\bfeta}\mapsto \sC_\bg(\omega,{\bfeta})$ is continuous.
		Hence, we infer that $\sC_\bg$ is a Carath\'eodory function.
		Recall also that $\hat{\bfeta}^*$ is $(\hat\sF_t)_{t\geq 0}$-adapted. Therefore, we deduce that the $\bC^1(\bar\sO)$-valued process $\bq(t,\omega)=\sC_\bg(\omega,\hat{\bfeta}^*(t,\omega))$ is $(\hat\sF_t)_{t\geq 0}$-adapted as well. 

In summary, we have $\{\hat\sF_t\}_{t \geq 0}$-adapted processes $(\bq,\bfpsi)$ with continuous paths in $\sD$ such that $$\nabla^{\hat\bfeta^*}\cdot\bq=0 \text{ and } \bq\Big|_{\Gamma}\cdot\bn^{\hat\bfeta^*}=\bfpsi\cdot\bn^{\hat\bfeta^*}.$$
Moreover, for any $\omega\in\hat\Omega$ that $\bq\in L^\infty(0,T;\bH^{2+k}(\sO)) \cap H^1(0,T;\bH^k(\sO))$ for any $k\leq \frac12$.
Now we define the approximate test functions $(\bq_{\ep},\bfpsi_{\ep})$, with the aid of the Piola transformation as done in the proof of Lemma \ref{tightuv}:
\begin{align*}
&\bq_{\ep}={J^{-1}_{\hat\bfeta^*_\ep}}\nabla A_{\hat\bfeta^*_{\ep}}{J^{}_{\hat\bfeta^*}} \nabla  A^{-1}_{\hat\bfeta^*} \left( \bq-\bfpsi\chi\right) +\bfpsi\chi-\frac{\lambda^{\hat{\bfeta}^*_{\ep}}-\lambda^{\hat\bfeta^*}}{\lambda^{\ep}_0}(\xi_0\chi)\\
&+{J^{-1}_{\hat\bfeta^*_\ep}}\sB\left( \text{div}\left( (J_{\hat\bfeta^*}(\nabla A_{\hat\bfeta^*})^{-1}-J_{\hat\bfeta_\ep^*}(\nabla A_{\hat\bfeta_\ep^*})^{-1})\bfpsi\chi-\frac{\lambda^{\hat{\bfeta}^*_{\ep}}-\lambda^{\hat\bfeta^*}}{\lambda^{\ep}_0}J_{\hat\bfeta_\ep^*}(\nabla A_{\hat\bfeta_\ep^*})^{-1}\xi_0\chi\right) \right)
\end{align*}	
and,
\begin{align*}	
&	\bfpsi_{\ep}= \bfpsi-\frac{\lambda^{\hat{\bfeta}^*_{\ep}}-\lambda^{\hat{\bfeta}^*}}{\lambda^{\ep}_0}\xi_0,
\end{align*}	
where we pick an appropriate $\xi_0 \in \bC_0^\infty(\Gamma)$ such that $\lambda_0^\ep$ defined below is not 0 for any $\ep>0$, 
$$\lambda_0^{\ep}(t)=-\int_\Gamma ({\bf id}+\hat{\bfeta}^*_{\ep}(t))\times \partial_z\xi_0\,dz. \quad\text{}$$
We also define the real-valued corrector functions,
	$$\lambda^{\hat{\bfeta}^*_{\ep}}(t)=-\int_\Gamma ({\bf id} +\hat{\bfeta}_{\ep}^*(t))\times \partial_z\bfpsi (t)dz,\quad \lambda^{\hat{\bfeta}^*_{}}(t)=-\int_\Gamma ({\bf id} +\hat{\bfeta}_{}^*(t))\times \partial_z\bfpsi (t)dz.$$
As earlier, $\chi(r)$ is a smooth function on $\sO$ such that $\chi(1)=1$ and $\chi(0)=0$.
	Observe that the properties of the Piola transformation (see e.g. Theorem 1.7 in \cite{C88}), imply that
$$\nabla^{\hat\bfeta^*_\ep} \cdot\bq_{\ep}=J^{}_{\hat\bfeta^*}J^{-1}_{\hat\bfeta_\ep^*} \nabla^{\hat\bfeta^*}\cdot\bq=0, \qquad \text{and}\qquad \bq_\ep|_\Gamma\cdot\bn_\ep^*=\bfpsi_\ep\cdot\bn_\ep^*.
$$
Furthermore, we have
$$|\frac{d}{dt}\lambda^{\hat{\bfeta}^*_{\ep}}|
\leq \|\hat\bv^*_\ep\|_{\bL^{2}(0,L)}\|\bfpsi\|_{\bH_0^{1}(0,L)}+\|\hat\bfeta^*_\ep\|_{\bL^\infty(0,L)}\|\partial_{t}\partial_z\bfpsi\|_{\bL^\infty(0,L)}.$$
 Hence $\lambda^{\hat{\bfeta}^*_{\ep}}\to \lambda^{\hat{\bfeta}^*}$ strongly in $L^\infty(0,T)$ and weakly 
 in $H^1(0,T)$ a.s.  
 Additionally, thanks to \eqref{convrest1} 
 we obtain that
 \begin{align}
 \notag&\|	\bq_\ep-\bq\|_{L^\infty(0,T;\bH^1(\sO))}\\ \notag&\leq \|A_{\hat\bfeta^*_\ep}-A_{\hat\bfeta^*}\|_{L^\infty(0,T;\bW^{2,3}(\sO))}\left( \|\bq\|_{L^\infty(0,T;\bH^1(\sO))}+\|\bfpsi\|_{L^\infty(0,T;\bH^1_0(0,L))}\right) \\
 &\qquad+\|\lambda^{\hat{\bfeta}^*_{\ep}}-\lambda^{\hat{\bfeta}^*}\|_{L^\infty(0,T)}\to 0 \qquad \hat\bP-\text{ a.s.}\label{convq}
 \end{align}
Furthermore, for any $1<m<\frac32$ we can see that (see e.g. \cite{BH21}, Theorem 1.4.4.2 \cite{G11}), 
 \begin{align*}
 	\|\partial_t\bq_\ep\|_{\bH^{-\frac12}(\sO)} \leq C(\delta)\|\bw^*_\ep\|_{\bH^{\frac12}(\sO)}\| \nabla A^{}_{\hat\bfeta^*}\|_{\bH^{m}(\sO)}\|\bq\|_{\bH^m(\sO)} + C(\delta)\|\partial_t\bq\|_{\bL^2(\sO)}\leq C.
 \end{align*}
 Hence, we deduce for any $\frac12\leq\alpha\le 1$ that,
 \begin{align}\label{convqt}
 	\partial_t\bq_\ep \rightharpoonup \partial_t\bq
 	\quad\text{ weakly in } L^2(0,T;\bH^{-\alpha}(\sO)), \quad \hat\bP-a.s. \, 
 \end{align}
Similarly, for any $k$ we have 
	\begin{equation}\begin{split}\label{convpsi}
			\bfpsi_{\ep} \rightarrow \bfpsi \quad \text{ in }L^\infty(0,T;\bC^k(\bar\Gamma)),\, \quad \hat\bP-a.s.\\
			\partial_t\bfpsi_{\ep} \rightharpoonup \partial_t \bfpsi \quad \text{ weakly in }L^2(0,T;\bC^k(\Gamma)),\, \quad \hat\bP-a.s.
		\end{split}
	\end{equation}

	Now we test \eqref{martingale1} with $(\bq_\ep,\bfpsi_\ep)$ for which we invoke the variant of It\^o's formula derived in Lemma 5.1 in \cite{BO13}. We can now pass $\ep\to 0$ starting with the stochastic integral.
		\begin{lem}\label{conv_G}
			The processes $
			\left( \int_0^t(G(\hat{\bu}_{\ep}(s),\hat{\bfeta}^*_{\ep}(s))d\hat{W}_{\ep}(s),\bQ_{\ep}(s))\right) _{t \in [0,T]}$ converge \, to \\ $\left( \int_0^t(G(\hat{\bu}(s),\hat{\bfeta}^*(s))d\hat{W}(s),\bQ(s))\right) _{t \in [0,T]}$  in $L^1(\hat{ \Omega};L^1(0,T;\mathbb{R}
			))$ as $ \ep \rightarrow 0$.
		\end{lem}
\begin{proof}
Under the assumptions \eqref{growthG} we observe have, 
\begin{align*}
&\int_0^T\|(G(\hat{\bu}_{\ep},\hat{\bfeta}^*_{\ep}),\bQ_{\ep})-( G(\hat \bu,\hat{\bfeta}^*),\bQ)\|^2_{L_2(U_0,\R)}d s\\
&\leq \int_0^T \|(G(\hat{\bu}_{\ep},\hat{{\bfeta}}^*_{\ep})-G(\hat{\bu},\hat{{\bfeta}}^*),\bQ_{\ep})\|^2_{L_2(U_0;\mathbb{R})} + \int_0^T\|(G(\hat{\bu},\hat{{\bfeta}}^*_{}),\bQ_{\ep}-\bQ)\|^2_{L_2(U_0;\mathbb{R})}\\
&\le \int_0^T \left( \|\hat{\bfeta}^*_{\ep}-\hat{\bfeta}^*\|^2_{\bL^2(0,L)}+\|\hat{\bu}_{\ep}-\hat\bu\|^2_{\bL^2(\sO)}\right)  \|\bQ_{\ep}\|_{\bL^2}^2\\
&+  \int_0^T \left( \|\hat{\bfeta}^*\|^2_{\bH^2_0(0,L)}+\|\hat\bu\|^2_{\bL^2(\sO)}\right)  \|\bQ_{\ep}-\bQ\|_{\bL^2}^2.
\end{align*}	
Then thanks to Theorem \ref{skorohod}, \eqref{convq} and \eqref{convpsi}$_1$ the right side of the inequality above converges to 0, $\hat\bP$-a.s. as $\ep \to 0$.	That is,
\begin{align}\label{g1}
	(G(\hat{\bu}_{\ep},\hat{\bfeta}^*_{\ep}),\bQ_{\ep})\rightarrow( G(\hat \bu,\hat{\bfeta}^*),\bQ), \qquad \text{$\hat \bP-$a.s \quad in $L^2(0,T;L_2(U_0,\R	)).$ }
	\end{align}
			Now using classical ideas from \cite{Ben} (see Lemma 2.1 of \cite{DGHT}), we obtain from \eqref{g1} that
\begin{align}\label{G2}
\int_0^t(G(\hat{\bu}_{\ep},\hat{\bfeta}^*_{\ep})d\hat W_\ep,\bQ_{\ep}) \rightarrow \int_0^t( G(\hat \bu,\hat{\bfeta}^*)d\hat{W},\bQ),\,
\text{in probability in $L^2(0,T;\R)$}.
\end{align}			
Furthermore for some $C>0$ independent of $\ep$ we have the following bounds that follow from It\^{o}'s isometry:
\begin{align}
\hat \bE\int_0^T |\int_0^t (G(\hat{\bu}_{\ep},\hat{\bfeta}^*_{\ep})d\hat W_\ep(s),\bQ_{\ep})&|^2d t=\int_0^T\hat \bE\int_0^t\|(G(\hat{\bu}_{\ep},\hat{\bfeta}^*_{\ep}),\bQ_{\ep})\|^2_{L_2(U_0,\R)}d s d t \notag\\
& \le  T	\hat \bE\int_0^T\left( \|{\hat{\bfeta}^*_{\ep}}\|_{\bH_0^2(0,L)}^2+\|\hat{\bu}_{\ep}\|^2_{\bL^2(\sO)}\right) \|\bQ_\ep\|^2_{\bL^2} d s\label{G3}\\
&\le   C.\notag
\end{align}
Here we also used the a.s. bounds $\|\bq_\ep\|_{L^\infty(0,T;\bL^2(\sO))} \leq C(\delta)(\|\bq\|_{L^\infty(0,T;\bL^2(\sO))}+\|\bfpsi\|_{L^\infty(0,T;\bL^2(0,L))})$.	Combining \eqref{G2}, \eqref{G3} with the Vitali convergence theorem, we conclude the proof of Lemma \ref{conv_G}.
\end{proof}	
The rest of the convergence results follows as in \cite{TC23} and the only term requiring further explanation is the boundary integral $\int_0^t\int_\Gamma S_{\hat\bfeta^*_\ep}(\hat\bv_\ep^*\cdot\hat\bn_\ep^*)(\hat\bu_\ep\cdot\bq_\ep)$. Observe that, due to the embedding $H^{\frac14}(\Gamma)\hookrightarrow L^4(\Gamma)$, Theorem \ref{skorohod2} and \eqref{convq}, $\hat\bu_\ep\cdot\bq_\ep$ converges to $\hat\bu\cdot\bq$ in $L^2(0,T;L^2(\Gamma))$. Combining this with \eqref{vweak2} and \eqref{convrest1}$_5$, we obtain the convergence of $\int_0^t\int_\Gamma S_{\hat\bfeta^*_\ep}(\hat\bv_\ep^*\cdot\hat\bn_\ep^*)(\hat\bu_\ep\cdot\bq_\ep)$ to $\int_0^t\int_\Gamma S_{\hat\bfeta^*}(\hat\bv^*\cdot\hat\bn^*)(\hat\bu\cdot\bq)$ a.s., thus completing the proof of Theorem \ref{exist2}.
\end{proof}
{\s Notice that the weak formulation in Theorem~\ref{exist2} still contains $\hat{\bfeta}^*(t)$ in several terms.
		We will now show that in fact $\hat{\bfeta}^*(t)$ can be replaced by the stochastic process $\hat{\bfeta}(t)$ to obtain the desired weak formulation 
	until some strictly positive stopping time 
		$T^{\bfeta}$.
		
		\begin{lem} \label{StoppingTime}{\bf{(Almost surely positive stopping time.)}}
			Let the deterministic initial data ${\bfeta}_0$ satisfy the assumptions \eqref{etainitial}.
			Then, for any $\delta=(\delta_1,\delta_2)$ satisfying \eqref{delta},
			there exists an almost surely positive stopping time $T^{\bfeta}$, given by
			\begin{equation}\label{stoppingT}
				T^{\bfeta}:=T\wedge\inf\{t>0:\inf_{\sO}  J_{\hat\bfeta}(t)\leq \delta_1\} \wedge\inf\{t>0:\|\hat{\bfeta}(t)\|_{\bH^s(0,L)}\geq \frac1{\delta_2} \},
			\end{equation}
			such that
			\begin{align}\label{etasequal2}
				\hat	{\bfeta}^*(t)=\hat{\bfeta}(t) \quad \text{for } t<T^{\bfeta}.
			\end{align}
		\end{lem}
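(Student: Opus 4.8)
The plan is to establish \eqref{etasequal2} by reproducing, at the level of the $\ep\to 0$ limit, the argument already used in \eqref{etasequal1} for the $N\to\infty$ limit, after first checking that the candidate $T^{\bfeta}$ of \eqref{stoppingT} is a genuine, $\hat\bP$-a.s. strictly positive $(\hat\sF_t)_{t\ge0}$-stopping time.

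First I would verify that $T^{\bfeta}$ is a stopping time and that $T^{\bfeta}>0$ $\hat\bP$-a.s. By Theorem~\ref{skorohod2} we have $\hat\bfeta\in C([0,T];\bH^s(0,L))$ with $s>\tfrac32$, and by \eqref{A2p}--\eqref{boundJ} the map $\bfeta\mapsto\nabla A_{\bfeta}$ is continuous from $\bH^s(\Gamma)$ into $\bC(\bar\sO)$; hence $t\mapsto\inf_{\sO}J_{\hat\bfeta}(t)$ and $t\mapsto\|\hat\bfeta(t)\|_{\bH^s(0,L)}$ are $\hat\bP$-a.s.\ continuous on $[0,T]$. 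Since $\hat\bfeta_\ep(0)=\bfeta_0$ for every $\ep$ and $\hat\bfeta_\ep\to\hat\bfeta$ in $C([0,T];\bH^s(0,L))$, we get $\hat\bfeta(0)=\bfeta_0$, and by assumption \eqref{etainitial} together with the embedding $\bH^2_0(0,L)\hookrightarrow\bH^s(0,L)$ the initial configuration lies strictly inside the admissible region $\{x>\delta_1\}\cap\{y<1/\delta_2\}$; continuity of the two functionals above then forces $T^{\bfeta}>0$ $\hat\bP$-a.s. That $T^{\bfeta}$ is an $(\hat\sF_t)_{t\ge0}$-stopping time follows since $\hat\bfeta$ is $(\hat\sF_t)_{t\ge0}$-adapted with continuous paths (see \eqref{Ft1}) and $T^{\bfeta}$ is the first exit time, truncated at $T$, of the continuous adapted process $t\mapsto\bigl(\inf_{\sO}J_{\hat\bfeta}(t),\|\hat\bfeta(t)\|_{\bH^s(0,L)}\bigr)$ from an open set.

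Next I would prove the coincidence $\hat\bfeta^*=\hat\bfeta$ before $T^{\bfeta}$. For each $\ep\in\Lambda$ introduce the stopping time $T^{\bfeta}_\ep:=T\wedge\inf\{t>0:\inf_{\sO}J_{\hat\bfeta_\ep}(t)\le\delta_1\}\wedge\inf\{t>0:\|\hat\bfeta_\ep(t)\|_{\bH^s(0,L)}\ge 1/\delta_2\}$. Transferring \eqref{etasequal1} (valid for the $\ep$-problem constructed in the previous section) to the present probability space via the maps $\phi_\ep$ of \eqref{newrv2} yields $\hat\bfeta^*_\ep(t)=\hat\bfeta_\ep(t)$ for all $t<T^{\bfeta}_\ep$, $\hat\bP$-a.s. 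The key point is then to show $T^{\bfeta}\le\liminf_{\ep\to0}T^{\bfeta}_\ep$ $\hat\bP$-a.s.: fix $\omega$ in the full-measure set where \eqref{etaunif2} and the continuity above hold, and fix $t<T^{\bfeta}(\omega)$; by continuity and compactness of $[0,t]$ we have $\inf_{s\in[0,t]}\inf_{\sO}J_{\hat\bfeta}(s)>\delta_1$ and $\sup_{s\in[0,t]}\|\hat\bfeta(s)\|_{\bH^s(0,L)}<1/\delta_2$, and the uniform convergences $\hat\bfeta_\ep\to\hat\bfeta$ in $C([0,T];\bC^1[0,L])$ and in $C([0,T];\bH^s(0,L))$ (see \eqref{etaunif2}, Theorem~\ref{skorohod2}), which via \eqref{A2p}--\eqref{boundJ} also give $J_{\hat\bfeta_\ep}\to J_{\hat\bfeta}$ in $C([0,T]\times\bar\sO)$, propagate these strict inequalities to $\hat\bfeta_\ep$ for all sufficiently small $\ep$, so $t\le T^{\bfeta}_\ep$; letting $t\uparrow T^{\bfeta}(\omega)$ gives the claim. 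Finally, for $t<T^{\bfeta}(\omega)$ and any $\epsilon>0$, the triangle inequality $\|\hat\bfeta(t)-\hat\bfeta^*(t)\|_{\bH^s(0,L)}\le\|\hat\bfeta(t)-\hat\bfeta_\ep(t)\|_{\bH^s(0,L)}+\|\hat\bfeta^*_\ep(t)-\hat\bfeta^*(t)\|_{\bH^s(0,L)}+\|\hat\bfeta_\ep(t)-\hat\bfeta^*_\ep(t)\|_{\bH^s(0,L)}$ closes the argument: the first two terms are $<\epsilon/2$ for $\ep$ small by Theorem~\ref{skorohod2}, while the third vanishes for the infinitely many $\ep$ with $t<T^{\bfeta}_\ep(\omega)$; letting $\epsilon\to 0$ and using time-continuity gives \eqref{etasequal2}.

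The step I expect to be the main obstacle is the inequality $T^{\bfeta}\le\liminf_{\ep\to0}T^{\bfeta}_\ep$, i.e.\ making precise that the strict admissibility constraints defining $T^{\bfeta}$ are stable under the almost sure $\ep\to0$ convergence, uniformly on compact time subintervals; this rests on the uniform-in-$\ep$ convergence of the Jacobians and of $\hat\bfeta_\ep$ in $C([0,T];\bH^s)$ from \eqref{etaunif2}--\eqref{convrest1}, together with the structural fact that, by construction \eqref{theta}--\eqref{eta*}, the discrete artificial variable $\bfeta^n_*$ is frozen exactly when the cutoff $\theta_\delta$ first drops to $0$, so that the coincidence $\bfeta_N=\bfeta^*_N$ on $\{t<T^{\bfeta}_N\}$ survives both limit passages.
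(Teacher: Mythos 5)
Your proposal is correct, and for the coincidence statement \eqref{etasequal2} it takes essentially the paper's route: you define the $\ep$-level exit times $T^{\bfeta}_\ep$, transfer the identity $\hat\bfeta^*_\ep=\hat\bfeta_\ep$ on $\{t<T^{\bfeta}_\ep\}$ through the maps $\phi_\ep$ of \eqref{newrv2}, prove $T^{\bfeta}\le\liminf_{\ep\to0}T^{\bfeta}_\ep$ from the uniform convergences, and close with the three-term triangle inequality — this mirrors verbatim the argument the paper gives for \eqref{etasequal1} at the $N\to\infty$ stage (and which it leaves implicit at the $\ep\to 0$ stage). Where you genuinely diverge is the a.s. positivity of $T^{\bfeta}$. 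The paper proves it quantitatively: for $T^{\bfeta}_2$ it bounds $\hat\bP[T^{\bfeta}_2<\epsilon]$ by Chebyshev, interpolates $\|\hat\bfeta(t)-\bfeta_0\|_{\bH^s}\le\|\hat\bfeta(t)-\bfeta_0\|_{\bL^2}^{1-s/2}\|\hat\bfeta(t)-\bfeta_0\|_{\bH^2}^{s/2}$, and uses $\|\hat\bfeta(t)-\bfeta_0\|_{\bL^2}\le\epsilon\sup_{[0,\epsilon)}\|\hat\bv\|_{\bL^2}$ together with the moment bounds of Lemma \ref{boundsep} to get an $O(\epsilon)$ decay; $T^{\bfeta}_1$ is handled analogously through $\|J_{\hat\bfeta}(t)-J_0\|_{C(\sO)}$. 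You instead argue qualitatively: $\hat\bfeta(0)=\bfeta_0$ lies strictly inside the admissible region by \eqref{etainitial}, the paths $t\mapsto\inf_{\sO}J_{\hat\bfeta}(t)$ and $t\mapsto\|\hat\bfeta(t)\|_{\bH^s(0,L)}$ are a.s. continuous (via $C([0,T];\bH^s)$-convergence and the continuity of $\bfeta\mapsto\nabla A_{\bfeta}$ from \eqref{A2p}--\eqref{boundJ}), so the first exit time of a continuous path started strictly inside an open set is positive. Both arguments are valid with the information available; yours is more elementary and needs only path continuity, while the paper's buys an explicit rate for $\hat\bP[T^{\bfeta}<\epsilon]$ in terms of the energy bounds (in particular on $\hat\bE\sup\|\hat\bv\|^2_{\bL^2}$), which is the kind of estimate that survives in settings where one has only moment bounds rather than pathwise continuity. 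One small point to make explicit in your version: the constraint in \eqref{etainitial} is on $\|\bfeta_0\|_{\bH^2_0}$, so you should note that $\|\bfeta_0\|_{\bH^s}\le\|\bfeta_0\|_{\bH^2_0}<1/\delta_2$ (by the interpolation convention the paper itself uses) before invoking strict interiority at $t=0$.
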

		
		\begin{proof}
			We write the stopping time as 
		$$	T^{\bfeta}=T\wedge\inf\{t>0:\inf_{\sO}  J_{\hat\bfeta}(t)\leq \delta_1\} \wedge\inf\{t>0:\|\hat{\bfeta}(t)\|_{\bH^s(0,L)}\geq \frac1{\delta_2} \}=:T\wedge T^{\bfeta}_1+T^{\bfeta}_2.
			$$
Observe that using the triangle inequality, for any $\delta_0>{\delta_2}$, we obtain for  $T^{\bfeta}_2$ that
			\begin{align*}
				\hat\bP[&T^{\bfeta}_2=0, \|{\bfeta}_0\|_{\bH^2(0,L)}<\frac1{\delta_0}] =\lim_{\epsilon\rightarrow 0^+}\hat\bP[T^{\bfeta}_2<\epsilon,\|{\bfeta}_0\|_{\bH^2(0,L)}<\frac1{\delta_0}]\\
				&\leq \limsup_{\epsilon \rightarrow 0^+}\hat\bP[\sup_{t\in[0,\epsilon)}\|\hat{\bfeta}(t)\|_{\bH^s(0,L)}>\frac1{\delta_2},\|{\bfeta}_0\|_{\bH^2(0,L)}<\frac1{\delta_0}]\\
				&\leq \limsup_{\epsilon \rightarrow 0^+}\hat\bP[\sup_{t\in[0,\epsilon)}\|\hat{\bfeta}(t)-{\bfeta}_0\|_{\bH^s(0,L)}>\frac1{\delta_2}-\frac1{\delta_0}] \\
				&\leq \frac1{(\frac1{\delta_2}-\frac1{\delta_0})}\limsup_{\epsilon \to 0}\hat\bE[\sup_{t\in[0,\epsilon)}\|\hat{\bfeta}(t)-{\bfeta}_0\|_{\bH^s(0,L)}]\\
				&\leq \frac1{(\frac1{\delta_2}-\frac1{\delta_0})}\limsup_{\epsilon\rightarrow 0}\hat\bE[\sup_{t\in[0,\epsilon)}\|\hat{\bfeta}(t)-{\bfeta}_0\|^{1-\frac{s}2}_{\bL^2(0,L)}\|\hat{\bfeta}(t)-{\bfeta}_0\|^{\frac{s}2}_{\bH^2(0,L)}]\\
				&\leq \frac1{(\frac1{\delta_2}-\frac1{\delta_0})}\limsup_{\epsilon\rightarrow 0}\hat\bE[\sup_{t\in[0,\epsilon)}\epsilon\|\hat\bv(t)\|^{1-\frac{s}{2}}_{\bL^2(0,L)}\|\hat{\bfeta}(t)-{\bfeta}_0\|^{\frac{s}{2}}_{\bH^2(0,L)}]\\
				&\leq \limsup_{\epsilon\rightarrow 0} \frac{\epsilon}{(\frac1{\delta_2}-\frac1{\delta_0})}\left( \hat\bE[\sup_{t\in[0,\epsilon)}\|\hat\bv(t)\|^{2}_{\bL^2(0,L)}]\right)^{\frac{2-s}4} \left( \hat\bE[\sup_{t\in(0,\epsilon)}\|\hat{\bfeta}(t)-{\bfeta}_0\|^{2}_{\bH^2(0,L)}]\right)^{\frac{s}4} \\
				&=0.
			\end{align*}
			Hence, by continuity from below, we infer that for any $\delta_2>0$, 
			\begin{align}
				\hat\bP[T^{\bfeta}_2=0, \|{\bfeta}_0\|_{\bH^2(0,L)}<\frac1{\delta_2}]=0.
			\end{align}
			
			We estimate $T^{\bfeta}_1$ similarly, by observing that for any $t\in[0,T]$, we have $\inf_{\sO}  J_{\hat\bfeta}(t)\geq \inf_{\sO} \hat J_0-\| J_{\hat\bfeta}(t)-J_0\|_{C(\sO)}$. Hence, for any $\delta_0>\delta_1$ we write
			\begin{align*}
				\hat\bP[T^{\bfeta}_1=0, \inf_{\sO}J_0>\delta_0] &\leq \limsup_{\epsilon \rightarrow 0^+}\hat\bP[\inf_{t\in[0,\epsilon)}\inf_{\hat\sO}  J_{\hat\bfeta}(t)<\delta_1,\inf_{\sO} \hat J_0>\delta_0]\\
				&\leq \limsup_{\epsilon \rightarrow 0^+}\hat\bP[\sup_{t\in[0,\epsilon)}\| J_{\hat\bfeta}(t)-J_0\|_{C(\sO)}>\delta_0-\delta_1] \\
				&\leq \frac1{(\delta_0-\delta_1)^2}\limsup_{\epsilon\rightarrow 0}\hat\bE[\sup_{t\in[0,\epsilon)}\| J_{\hat\bfeta}(t)-J_0\|^2_{C(\sO)}]\\
				&=0.
			\end{align*}
			Thus, for given $\delta_1>0$, 
			\begin{align}
				\hat\bP[T^{\bfeta}_1=0, \inf_{\sO}J_0>\delta_1]=0.
			\end{align}
		In conclusion we have,
			\begin{align}
				\hat\bP[T^{\bfeta}=0, \inf_{\sO}J_0>\delta_1, \|{\bfeta}_0\|_{\bH^2(0,L)}<\frac1{\delta_2}]=0.
			\end{align}
			
		\end{proof}
	}

	Finally, by combining Theorem \ref{exist2}, Lemma~\ref{StoppingTime}, and  \eqref{etasequal2}, we conclude the proof of our main result:	
			\begin{thm}\label{MainTheorem}{\bf{(Main result.)}}
			For any given $\delta=(\delta_1,\delta_2)$ satisfying \eqref{delta}, if the deterministic initial data  ${\bfeta}_0$ satisfies \eqref{etainitial}, then the stochastic processes $(\hat\bu,\hat{\bfeta},T^{\bfeta})$ along with the stochastic basis constructed in Theorem \ref{skorohod2}, 
			determine a martingale solution 
			in the sense of Definition \ref{def:martingale} of the stochastic FSI problem \eqref{u}-\eqref{ic}.\end{thm}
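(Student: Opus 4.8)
The plan is to verify, item by item, the five requirements in Definition \ref{def:martingale} for the data $(\hat\bu,\hat\bfeta,T^\bfeta)$ and the stochastic basis produced by Theorem \ref{skorohod2}, using Theorem \ref{exist2} and Lemma \ref{StoppingTime} as the two essential inputs; no genuinely new estimate is required, so this is an assembly argument.

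First I would take $\mathscr{S}=(\hat\Omega,\hat\sF,(\hat\sF_t)_{t\geq0},\hat\bP,\hat W)$ with $(\hat\sF_t)_{t\geq0}$ the filtration \eqref{Ft1}, which is complete and right-continuous by construction; that $\hat W$ is a $U$-valued $(\hat\sF_t)$-Wiener process follows from the standard argument already invoked for $\bar W_N$ after Theorem \ref{skorohod} and preserved in the limit $\ep\to0$. This gives item (1). For item (2), I would combine the $\ep$-uniform bounds of Lemma \ref{boundsep} with weak lower semicontinuity of norms and the convergences of Theorem \ref{skorohod2} to place $\hat\bu\in\sW_F$ and $\hat\bfeta\in\sW_S$; the identity $\nabla^{\hat\bfeta}\cdot\hat\bu=0$ is already recorded at the end of Theorem \ref{exist2} (equivalently it follows from $\|\text{div}^{\hat\bfeta^*_\ep}\hat\bu_\ep\|^2_{L^2}=O(\ep)$ together with \eqref{convrest1}), and the kinematic coupling $\hat\bu|_\Gamma\cdot\bn^{\hat\bfeta}=\partial_t\hat\bfeta\cdot\bn^{\hat\bfeta}$ follows from $\|(\hat\bu_\ep|_\Gamma-\hat\bv_\ep)\cdot\hat\bn^*_\ep\|^2_{L^2}=O(\ep)$ (Lemma \ref{boundsep}(5)), the strong convergences of $\hat\bu_\ep,\hat\bv_\ep,\hat\bn^*_\ep$, and $\hat\bfeta^*=\hat\bfeta$ on $[0,T^\bfeta)$ from Lemma \ref{StoppingTime}.

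Items (3) and (4) are then essentially inherited. The random time $T^\bfeta$ defined by \eqref{stoppingT} is the minimum of $T$ and two hitting times of closed sets by the continuous, $(\hat\sF_t)$-adapted, $C(\bar\sO)$-valued process $t\mapsto J_{\hat\bfeta}(t)$ and the continuous adapted process $t\mapsto\|\hat\bfeta(t)\|_{\bH^s(0,L)}$, hence an $(\hat\sF_t)$-stopping time by right continuity of the filtration, and it is $\hat\bP$-a.s. strictly positive by Lemma \ref{StoppingTime}; $\hat\bfeta$ is progressively measurable since it is continuous and adapted, while $\hat\bu$ is replaced by the $(\hat\sF_t)$-progressively measurable representative furnished by Lemma \ref{representative}, which alters no time integral in the weak formulation. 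Finally, for item (5) I would start from \eqref{martingale2} of Theorem \ref{exist2}, valid for every $(\hat\sF_t)$-adapted $C^1$-in-time process $(\bq,\bfpsi)$ valued in $\sD$ with $\nabla^{\hat\bfeta^*}\cdot\bq=0$ and $\bq|_\Gamma\cdot\bn^{\hat\bfeta^*}=\bfpsi\cdot\bn^{\hat\bfeta^*}$; restricting to $t<T^\bfeta$, where $\hat\bfeta^*=\hat\bfeta$ by Lemma \ref{StoppingTime}, these admissibility conditions become exactly those of Definition \ref{def:martingale}(5), $\hat\bw^*$ becomes the ALE velocity associated with $\hat\bfeta$, and reversing the integration by parts \eqref{byparts}--\eqref{Jform} while using the kinematic coupling condition and the geometric identity $\partial_tJ_{\hat\bfeta}=J_{\hat\bfeta}\,\nabla^{\hat\bfeta}\cdot\hat\bw$ recasts the interface and advection terms of \eqref{martingale2} into the form in \eqref{weaksol}; every remaining term matches verbatim. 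Combined with the standing compatibility assumption \eqref{etainitial} on $\bfeta_0$, this yields \eqref{weaksol} $\hat\bP$-a.s. for a.e. $t\in[0,T^\bfeta)$, which is item (5).

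The step I expect to require the most care is this last one: one must check that the substitution $\hat\bfeta^*\to\hat\bfeta$ is termwise legitimate only on the stochastic interval $[0,T^\bfeta)$, that the class of admissible random test processes in \eqref{weaksol} genuinely coincides with the one used in Theorem \ref{exist2} once this substitution is made, and that the advection and interface terms recombine precisely, via the kinematic condition, into the form of Definition \ref{def:martingale}. Everything else is a direct transcription of results already established.
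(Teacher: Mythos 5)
Your proposal is correct and follows essentially the same route as the paper: the paper's own proof of Theorem \ref{MainTheorem} is precisely the one-line combination of Theorem \ref{exist2}, Lemma \ref{StoppingTime} and \eqref{etasequal2}, and your argument is a careful item-by-item expansion of that same assembly. The only point you gloss over is that \eqref{martingale2} carries the boundary term $\frac12\int_0^t\int_\Gamma S_{\hat\bfeta^*}(\hat\bv^*\cdot\hat\bn^*)(\hat\bu\cdot\bq)$ which is not written in \eqref{weaksol}; reconciling the two requires the kinematic coupling condition, but this is an issue with the paper's bookkeeping rather than with your argument, and the paper itself does not address it either.
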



	\bibliography{references}
	
	\bibliographystyle{plain}
\end{document}